\documentclass[reqno,a4paper]{amsart}

\usepackage[all,numberbysection]{preprint_style}

\usepackage[backend=biber,maxbibnames=5,maxalphanames=5,style=alphabetic,bibencoding=utf8,giveninits,url=false,isbn=false,doi=true,eprint=false]{biblatex}
\renewbibmacro{in:}{}
\AtBeginBibliography{\small}

\usepackage[T1]{fontenc}
\allowdisplaybreaks

\usepackage[top=2.75cm,bottom=2.75cm,left=3cm,right=3cm]{geometry}

\usepackage{upgreek}
\usepackage{stmaryrd}
\usepackage{amsthm}
\usepackage{mathtools}
\usepackage{thmtools}
\usepackage{mathrsfs}

\providecommand{\divo}{\textup{div}\,}
\newcommand{\Divhk}{\mathcal{D}\dot{\iota}\nu_h^k}
\newcommand{\Divhke}{\mathcal{D}\dot{\iota}\nu_h^{k-1}}

\newcommand{\Dhke}{\boldsymbol{\mathcal{D}}_h^{k-1}}
\newcommand{\Dhk}{\boldsymbol{\mathcal{D}}_{h}^k}

\newcommand{\Rhk}{\boldsymbol{\mathcal{R}}_h^k}
\newcommand{\Rhks}{\boldsymbol{\mathcal{R}}_h^{\smash{k,{\textup{sym}}}}}
\newcommand{\Rhke}{\boldsymbol{\mathcal{R}}_h^{k-1}}
\newcommand{\Ghk}{\boldsymbol{\mathcal{G}}_h^k}
\newcommand{\Ghke}{\boldsymbol{\mathcal{G}}_h^{k-1}}
\providecommand{\PiDG}{{\Uppi_{h}^{k}}}
\providecommand{\PiDGe}{{\widetilde{\boldsymbol{\Uppi}}_{h}^{0}}}
\providecommand{\vep}{\varepsilon}
\providecommand{\SSS}{\BS}

\providecommand{\Vhk}{\smash{V_h^k}}
\providecommand{\Vhke}{\smash{V_h^{k-1}}}
\providecommand{\Vo}{\mathaccent23 V}
\providecommand{\Qo}{\mathaccent23 Q}
\providecommand{\Qhk}{\smash{Q_h^k}}
\providecommand{\Qhko}{{\mathaccent23 Q}_h^{k}}
\providecommand{\Qhke}{\smash{Q_h^{k-1}}}
\providecommand{\Qhkeo}{{\mathaccent23 Q}_h^{k-1}}
\newcommand{\WDG}{W^{1,p}(\mathcal{T}_h)}
\newcommand{\WDGd}{W^{1,\psi}_{\divo}(\mathcal{T}_h)}
\providecommand{\Xhk}{\smash{X_h^k}}
\providecommand{\Xhks}{\smash{X_{h}^{k-1,\textup{sym}}}}
\providecommand{\tria}{\triang}
\providecommand{\triaMK}{\mathcal{M}_K}
\newcommand\leftavg{\{\!\!\{}
\newcommand\rightavg{\}\!\!\}}

\providecommand{\fdg}{{\,\big|\,}}

\newtheorem{lem}[equation]{Lemma}
\newtheorem{thm}[equation]{Theorem}
\newtheorem{prop}[equation]{Proposition}
\newtheorem{ass}[equation]{Assumption}
\newtheorem{rem}[equation]{Remark}
\newtheorem{cor}[equation]{Corollary}

\begin{document}
	
	\title[Pressure-robust DG $p$-Stokes]{Pressure-robust and quasioptimal Discontinuous Galerkin discretisations of the $p$-Stokes problem}
	
	\author[P.A.~Gazca-Orozco]{P.A.\ Gazca-Orozco}
	\author[M.~\Ruzicka]{M.\ \Ruzicka}
	
	\address[P.A. Gazca-Orozco]{Faculty of Mathematics and Physics, Charles University, Sokolovská 83, 186 75, Prague, Czech Republic}
	\email{gazca@karlin.mff.cuni.cz}
\address[M. \Ruzicka]{Department of Applied Mathematics, University of Freiburg, 79104, Freiburg, Germany}
\email{rose@mathematik.uni-freiburg.de}
	
	\subjclass[2020]{
  76A05, 
  35Q35, 
  35J92,  
  65N12,  
  65N15,  
  65N30   
	}
	
  \keywords{
      \lowercase{$p$}-Stokes problem,
    Discontinuous Galerkin,
    \emph{a priori} error estimates,
    quasi-optimality, pressure-robust
    best-approximation discontinuous Galerkin, $p$-Stokes system,
    convergence, convergence rates
}
	
	\date{\today}
	
	\setcounter{tocdepth}{1} 	
	\begin{abstract}
  In the present paper, we propose Local Discontinuous Galerkin (LDG)
  approximations for a nonlinear system of $p$-Stokes type, having
  $(p,\delta)$-structure.  On the basis~of~the~primal~formulation, we
  prove well-posedness, and stability (\textit{a priori} estimates) of
  the methods under truly minimal regularity assumptions. We show that
  the first method possesses a pressure-robust and quasi-optimal error
  estimate, and discuss its consequences. Moreover, we propose a
  second method, for which we show a pressure-robust error estimate
  and prove convergence and convergence rates, which are optimal for
  linear ansatz functions for all $p\in (1,\infty)$ and $\delta\ge 0$.
	\end{abstract}

    \maketitle


  \section{Introduction}
In this paper, we examine Local Discontinuous Galerkin (LDG) 
discretisations of non-linear problems of \textit{$p$-Stokes
type},~i.e.,
\begin{equation}
  \label{eq:p-stokes}
  \begin{aligned}
    -\divo\SSS(\BD\bv)+\nabla q&=\bff \qquad&&\text{in }\Omega\,,\\
    \divo\bv&=0 \qquad&&\text{in }\Omega\,,
    \\
    \bv &= \bzero &&\text{on } \partial\Omega\,.
  \end{aligned}
\end{equation}
The physical problem motivating this study is the laminar, steady
motion of a homogeneous, incompressible fluid with shear-dependent
viscosity. More precisely, for a given external body force
$\bff\colon\Omega\to\mathbb{R}^d$, we seek a
velocity field 
${\bv\hspace{-0.1em}=\hspace{-0.1em}(v_1,\dots,v_d)^\top\hspace{-0.1em}\colon\hspace{-0.1em}\Omega\hspace{-0.1em}\to\hspace{-0.1em}
  \mathbb{R}^d}$ and a
kinematic pressure~${q\hspace{-0.1em}\colon\hspace{-0.1em}\Omega\hspace{-0.1em}\to\hspace{-0.1em}
  \mathbb{R}}$ {solving \eqref{eq:p-stokes}.  Here,
$\Omega\hspace{-0.1em}\subseteq\hspace{-0.1em} \mathbb{R}^d$,
$d\hspace{-0.1em}\in\hspace{-0.1em} \{2,3\}$, is a bounded polyhedral
domain having~a~Lipschitz \mbox{continuous} boundary
$\partial\Omega$. The extra stress tensor
$\SSS \hspace{-0.1em}\colon\hspace{-0.1em}\mathbb{R}^{d\times d}\hspace{-0.1em}\to\hspace{-0.1em}\mathbb{R}^{d\times d}_{\textup{sym}}$ depends on the~strain~rate~tensor
$\BD\bv\coloneqq \frac{1}{2}(\nabla\bv + \nabla\bv^\top)$,
  i.e., the symmetric
  part of the velocity gradient
${\nabla \bv 
\colon  \Omega\to
\mathbb{R}^{d\times d}}$.  Physical interpretation and discussion of
some non-Newtonian fluid models can be found, e.g., in
\cite{bird,mrr,ma-ra-model}.

Throughout the paper, we assume that the extra stress
tensor~$\SSS$~has~\mbox{$(p,\delta)$-structure}
(cf.~Ass.~\ref{assum:extra_stress}). The prototypical example
falling into this class is
\begin{align*}
    \SSS(\BD\bv)=\mu\, (\delta+\vert \BD\bv\vert)^{p-2}\BD\bv\,,
\end{align*}
where $p\in (1,\infty)$, $\delta\ge 0$, and $\mu>0$.
The mathematical investigation of fluids with shear-dependent viscosities
started with the celebrated work of O.\ Ladyzhenskaya
(cf.~\cite{lady-bo}). In recent years, there has been enormous
progress in the understanding of this problem, and we refer
the~reader~to \cite{mnrr,ma-ra-model,fms2,dms,die-ru-wolf,hugo-boundary,hugo-petr-rose,br-reg-shearthin,bdr-phi-stokes,die-sueli-2013,kr-pnse-ldg-1} 
and the references therein for a detailed discussion.

The objective of this work is to propose a Local Discontinuous
Galerkin (LDG) scheme for \eqref{eq:p-stokes} for which one can
establish a \textit{minimal regularity, quasi-optimal, pressure-robust} (a priori) error
estimate, i.e., a best-approximation result of the form
\begin{align}\label{eq:error_estimate}
  \begin{aligned}
    &\|\BF(\Dhke \bv_h) - \BF(\BD \bv)\|^2_{2,\Omega} +
    m_{\varphi_{\bbeta_h(\bv_h)},h}(\bv_h)
    \\
    &\lesssim \inf_{\bw_h \in V_h^k(0)} \left(\|\BF(\Dhke \bw_h) - \BF(\BD
      \bv)\|^2_{2,\Omega} +
      m_{\varphi_{\bbeta_h(\bv_h)},h}(\bw_h)\right)\,,
  \end{aligned}
\end{align}
where $\bv_h\in V_h^k(0)$ is the discrete solution,
$\Dhke\colon W^{1,p}(\mathcal{T}_h)^d\to L^p(\Omega)^{d\times d}$ is
the symmetric DG gradient, and $\BF$ and
$m_{\varphi_{\bbeta_h(\bv_h)},h}$ are appropriate measures of the
error related to the $(p,\delta)$-structure~of~$\BS$ (cf.~\eqref{eq:def_F}, \eqref{eq:mod1}, \eqref{def_shift}). Here, $V_h^k$ is
typically a space of broken polynomials, i.e.,
$\smash{V_h^k \coloneqq \mathbb{P}^k(\triang)^d}$, on
a~\mbox{triangulation}~$\triang$~of~$\Omega$, which is meant to
approximate the full space $W^{1,p}(\Omega)^d$, and $\Vhk(0)$ is the
subspace of $\Vhk$ of vector fields $\bu_h$ with $\Divhk \bu_h=0$,
i.e., the DG divergence is zero. The estimate
\eqref{eq:error_estimate} is called quasi-optimal and pressure-robust,
because the velocity error is proportional to the best approximation
error measured in the error measure related to the
$(p,\delta)$-structure~of~$\BS$ and is independent of the pressure.
Crucially, when deriving the estimate \eqref{eq:error_estimate} we
only assume the natural regularity of the continuous problem; namely,
$\smash{\bu\in W^{1,p}_{0,\divo}(\Omega)^d}$, the subspace of
solenoidal vector fields from $\smash{ W^{1,p}_{0}(\Omega)^d}$, and
$\smash{\bff\in W^{-1,p'}(\Omega)^d}$. 

Pressure-robustness of the estimate in \eqref{eq:error_estimate} refers to the fact that the convergence of the velocity field is not affected by the quality of the pressure approximation.
This is a topic that has received much attention in recent years,
mostly pertaining to the linear Stokes problem.
It is a very desirable property of discrete approximations of incompressible flow,
since non-robust discretisations may produce larger errors for small viscosities, and even unphysical behaviour;
for more details see e.g.\ \cite{Lin.2014,JLMNR17,LLMS.2017,KZ.2020,KVZ.2021} (and the references therein).
Moreover, as elaborated in \cite{DHKZ.2025}, non-pressure-robust discretisations of non-Newtonian flow lead in general to suboptimal rates for the velocity.
In the quasilinear setting there are fewer results in this direction available; classical results for conforming \cite{barliu,bdr-phi-stokes,Hi13a,kr-pnse-ldg-2} and non-conforming \cite{bustinza,CHSW13,GS15,BCPH20,CdPH.2023} discretisations are not pressure-robust.
Regarding robust estimates we can mention for example those for a two-dimensional high-order approximation from \cite{PS.2025} or the $H(\diver;\Omega)$-conforming method from \cite{dVdPH} (similar results might be obtained from the DG discretisations mentioned earlier by taking $H(\diver)$-conforming subspaces).
However, all of these results hold assuming some extra regularity of the exact solution: they are not \emph{minimal regularity quasioptimal} estimates.
To date, the only such results for a non-conforming scheme were derived in \cite{DHKZ.2025} for a Crouzeix--Raviart approximation.

A whole theory dealing with the characterisation of truly
quasi-optimal discretisations of symmetric and elliptic linear
problems in $\smash{W^{1,2}_0(\Omega)^d}$ was developed in
\cite{VZ.2018.I,VZ.2019.II,VZ.2018.III}. These results have been
extended to the linear Stokes problem in \cite{VZ.2019,KZ.2020,KVZ.2021},
showing quasi-optimal and even pressure-robust error estimates.  In
\cite{bgkr-optimal-dg} a quasi-optimal DG discretisation for problems
of $p$-Laplacian type is proposed, which implies convergence of
the method under minimal regularity assumptions, and convergence rates
under additional regularity conditions on the velocity, which are
optimal for linear ansatz functions.
Instrumental in the derivation of these results is the presence of a so-called smoothing operator $\BE_h$ in the forcing term: $\langle \bm{f},\BE_h\bw_h \rangle_{W^{-1,p'}(\Omega);W^{1,p}_0(\Omega)}$;
this operator maps the discrete space $V^k_h$ into $W^{1,p}_0(\Omega)$-conforming functions, and satisfies certain structural properties (see Section \ref{sec:smoothing});
for $p\not=2$ and quadratic ansatz functions (or higher), this operator needs to be applied to all the test functions in the discrete formulation (see Rem.~\ref{rem:grad_tilde}).
The only work with minimal regularity and pressure-robust estimates \cite{DHKZ.2025} mentioned previously,
proposes two methods: the first one requires the implementation of $\BE_h$ in the right-hand-side only (just like all the previous results for scalar and/or linear problems), but considers a nonlinear term $\BS(\nabla \bv)$ involving the full gradient, which is unphysical.
The second method requires the application of $\BE_h$ to all \emph{trial and test functions},
which is equivalent  to a conforming and divergence-free discretisation on the image of the
smoothing operator $\BE_h $ applied to the Crouzeix--Raviart finite
element space and thus, pressure-robustness follows easily (and no jump penalisation is required).
This work strives on the other hand to follow the approach from previous works, in which $\BE_h$ is applied to test functions only,
while at the same time involving the symmetric velocity gradient.
Our results are thus complementary to those from \cite{DHKZ.2025}, and cover also other DG discretisations.


Traditionally, a quasioptimality bound like \eqref{eq:error_estimate} is enough to conclude plain convergence of the scheme for minimal regularity solutions, and convergence rates with additional regularity of the velocity (one expects e.g.\ linear convergence for linear ansatz functions).
Surprisingly, however, we ran into a couple of issues that prevented us in carrying out this argument in the case $p>2$,
but also partially for $p\leq 2$.
The problem arises due to the non-conformity (jump) term $m_{\varphi_{\beta_h}(\bv_h),h}(\bw_h)$;
in the results for the $p$-Laplace problem from \cite{bgkr-optimal-dg}, it was possible to pick $\bw_h$ as a quasi-interpolant of $\bv$ into a conforming subspace,
so this term vanished completely. 
In contrast, the estimate \eqref{eq:error_estimate} would require $\bw_h$ to be discretely divergence-free as well,
and to the best of our knowledge there is currently no quasi-interpolation operator with this property.
For this reason we resort to setting $\bw_h$ as the (local) $L^2$-projection of $\bv$ onto $V^k_h(0)$,
which enables us to prove for $p\leq 2$ that the right-hand-side of \eqref{eq:error_estimate} (and hence the \emph{squared} error) decays to zero for minimal regularity solutions,
and with a rate $h^p$ for linear elements with additional regularity (and $\delta>0$), which is suboptimal compared to the optimal $h^2$ rate.

 Thus, inspired by the recent works \cite{kr-orlicz-ldg,kr-pnse-ldg-1,kr-pnse-ldg-2},
 we propose also an alternative LDG scheme for \eqref{eq:p-stokes} with a modified non-conformity penalisation,
 for which we derive a minimal regularity, pressure-robust, \emph{weak} quasi-optimality bound:
 the velocity error is bounded by the best approximation error plus additional terms, but is still independent of the pressure.
We term this weak quasioptimality, since even though the best-approximation error is not necessarily proportional to the velocity error,
the asymptotic behaviour is still optimal: as a consequence we obtain plain convergence for minimal regularity solutions,
and additionally linear convergence for linear ansatz functions (under the usual regularity assumptions),
with no restrictions on $p$ or $\delta$. 

One of the takeaways from this work is that care should be taken when defining concepts such as quasioptimality for nonlinear problems;
for $p$-Stokes-type problems it is still not clear what is the correct way of measuring and penalising the non-conformity of the approximations.
We showed that the jump penalisation that worked for the scalar problem in \cite{bgkr-optimal-dg} led to quasioptimality,
but made the derivation of optimal error rates problematic,
whereas an alternative penalisation led to optimal rates but no genuine quasioptimality (note that both of these reduce to the usual DG jump penalisation for $p=2$).

\textit{This paper is organized as follows:} In Section
\ref{sec:preliminaries}, we introduce the employed notation,
define relevant function spaces, basic assumptions on the extra
stress~tensor~$\SSS$~and~its consequences, weak formulations in
Problem (Q) and Problem (P) of the~system~\eqref{eq:p-stokes}, and
discrete operators.  In Section \ref{sec:ldg}, we propose a primal
formulation, i.e, Problem (Q$_h$) and Problem (P$_h$) of the system
\eqref{eq:p-stokes}. Moreover, we prove that this LDG scheme allows for
a quasi-optimal and pressure-robust error estimate, and discuss its consequences.  In Section
\ref{sec:rob}, we propose a modified scheme and prove a
pressure-robust error estimate, which allows us to prove convergence
and convergence rates for all $p\in (1,\infty)$ and $\delta\ge 0$. 

\section{Preliminaries}\label{sec:preliminaries}

\subsection{Function spaces}

We employ $c,  C$ to denote generic constants, that may change from
line to line and may depend only on the polynomial degree $k$, the
chunkiness $\omega_0$, the characteristics of $\BS$, and the dimension
$d$. Moreover, we~write $f \lesssim g$ if there exist a constant $c>0$
such that $f \le c\, g$, and ${f\sim g}$ if and only if there exists
constants $c, C>0$ such that $c\, f \le g\le C\, f$.

Throughout the paper, let $\Omega\subseteq \mathbb{R}^d$,
$d \in \{2,3\}$, be a bounded, polyhedral Lipschitz domain~and
$M\subseteq \mathbb{R}^d$, $d \in \{2,3\}$, a (Lebesgue) measurable
set. Then, for every $k\in \setN$ and $p\in [1,\infty]$, we employ the
customary Lebesgue spaces $(L^p(M), \smash{\|\cdot\|_{p,M}}) $ and
Sobolev spaces $(W^{k,p}(M), \smash{\|\cdot\|_{k,p,M}})$. The space
$W^{1,p}_0(\Omega)$ is defined as those functions from
$W^{1,p}(\Omega)$ whose trace vanishes on $\partial\Omega$. We equip
$\smash{W^{1,p}_0(\Omega)}$ with the norm
$\smash{\norm{\nabla\,\cdot\,}_p}$.

We always denote
vector-valued functions by boldface letters~and~tensor-valued
functions by capital boldface letters. The standard scalar product
between~two vectors is denoted by $\ba \cdot\bb$, while the
  Frobenius scalar product between~two~tensors is denoted by
  $\BA: \BB$.  The mean value of a locally integrable function $f$
  over a measurable set $M\subseteq \Omega$ is denoted by
  ${\mean{f}_M\coloneqq \smash{\dashint_M f
      \,\textup{d}x}\coloneqq \smash{\frac 1 {|M|}\int_M f
      \,\textup{d}x}}$. Moreover, we employ the notation
  $\hskp{f}{g}_\Omega\coloneqq \int_\Omega f g\,\textup{d}x$, whenever the
  right-hand~side~is~\mbox{well-defined}. For a Banach space $V$ we
  denote by $V^*$ its dual space and by $\langle\cdot, \cdot\rangle
  _V$ the duality pairing.

From the theory of Orlicz spaces $L^\psi (M)$ and
Sobolev--Orlicz spaces $W^{\ell,\psi} (M)$, $\ell \in \setN$, 
(cf.~\cite{ren-rao}) and generalized
Orlicz~spaces~$L^{\psi(\cdot)} (M)$ (cf.~\cite{HH19}), we employ
\mbox{N-functions} $\psi \colon \setR^{\geq 0} \to \setR^{\geq 0}$ and
generalized \mbox{N-functions}
$\psi \colon M \times \setR^{\ge 0} \to \setR^{\ge 0}$, i.e., $\psi$
is a Carath\'eodory function such that $\psi(x,\cdot)$ is an
N-function for a.e.~${x \in M}$,~\mbox{respectively}. The
{\it modular}~is~defined~via
$\rho_{\psi,M}(f)\coloneqq \int_M \psi(\abs{f})\,\textup{d}x $ if
$\psi$ is an N-function, and via
$ \rho_{\psi,M}(f)\coloneqq \int_M \psi(x,\abs{f(x)})\,\textup{d}x $,
if $\psi$ is a generalized N-function. An N-function $\psi$ satisfies
the $\Delta_2$-condition (in short, $\psi \in \Delta_2$), if there
exists $K> 2$ such that for every
$t \ge 0$,~it~holds~that~${\psi(2\,t) \leq K\, \psi(t)}$. We denote
the smallest such constant by ${\Delta_2(\psi)>2}$. We define the
(convex) conjugate (generalized) N-function
${\psi^*\colon M\times \mathbb{R}^{\ge 0}\to \mathbb{R}^{\ge 0}}$ via \linebreak
$\psi^*(x,t)\coloneqq \sup_{s\ge 0}{ ts-\psi(x,s)}$ for all $t\ge 0$
and a.e.\ ${x\in M}$. If $\psi, \psi^* \in \Delta_2$, then we have
that
\begin{align}
  \label{eq:psi'}
  \psi^* \circ \psi'\sim \psi\,,
\end{align}
with constants depending only on $\Delta_2(\psi),\Delta_2( \psi ^*)$.
We will also need the $\varepsilon$-Young inequality: for every
$\varepsilon\hspace*{-0.1em}> \hspace*{-0.1em} 0$, there exits a
constant $c_\varepsilon\hspace*{-0.1em}>\hspace*{-0.1em}0 $, depending
only on $\Delta_2(\psi),\Delta_2( \psi ^*)$, such that for every
$s,t\geq 0$, it holds that
\begin{align}
  \label{ineq:young}
  \begin{aligned}
    t\,s&\leq \varepsilon \, \psi(t)+ c_\varepsilon \,\psi^*(s)\,.
  \end{aligned}
\end{align}

\subsection{Basic properties of the extra stress tensor}

Throughout the paper, we assume that the extra stress tensor $\SSS$
has $(p,\delta)$-structure. A detailed discussion and full proofs can
be found in \cite{die-ett,dr-nafsa}. For a tensor
$\BA\in \mathbb{R}^{d\times d}$, we denote its symmetric part by
$\BA^{\mathrm{sym}}\coloneqq \frac{1}{2}(\BA+\BA^\top)\in \RRddsym\coloneqq \{\BA\in \RRdd \mid \BA=\BA^\top\}$.

For $p \in (1,\infty)$ and~$\delta\ge 0$, we define a special N-function
$\phi=\phi_{p,\delta}\colon\smash{\mathbb{R}^{\ge 0}\to \mathbb{R}^{\ge 0}}$~by
\begin{align} 
  \label{eq:5} 
  \varphi(t)\coloneqq  \int _0^t \varphi'(s)\, \mathrm ds\,,\quad\text{where}\quad
  \varphi'(t) \coloneqq  (\delta +t)^{p-2} t\quad\textup{ for all }t\ge 0\,.
\end{align}
It is well-known that $\phi$ is balanced
(cf.~\cite{dr-nafsa,br-multiple-approx}), since
$ {\min\set{1,p-1}\,( \delta+t)^{p-2} \le \varphi''(t)}$
$\leq \max\set{1,p-1}( \delta+t)^{p-2}$ for~all~${t,\delta\ge
  0}$.

An important tool in our analysis are {\it shifted N-functions}
$\set{\psi_a}_{\smash{a \ge 0}}$ (cf.~\cite{DK08,dr-nafsa}). For a
given N-function $\psi\colon\mathbb{R}^{\ge 0}\to \mathbb{R}^{\ge 0}$,
we define the family of shifted N-functions
${\psi_a\colon\mathbb{R}^{\ge 0}\to \mathbb{R}^{\ge 0}}$ for every
$a\ge 0$, via
\begin{align}
  \label{eq:phi_shifted}
  \psi_a(t)\coloneqq  \int _0^t \psi_a'(s)\, \mathrm ds\,,\quad\text{where }\quad
  \psi'_a(t)\coloneqq \psi'(a+t)\frac {t}{a+t}\,,\quad\textup{ for all }t\ge 0\,.
\end{align}

\begin{rem} \label{rem:phi_a} {\rm For the above defined special N-function
    $\varphi $ we~have, uniformly in $a,t\!\ge\! 0$, that
    $\phi_a(t)\! \sim\! (\delta+a+t)^{p-2} t^2$~and
    $(\phi_a)^*(t) \!\sim\! ((\delta+a)^{p-1} + t)^{p'-2} t^2$.  The
    families $\set{\phi_a}_{\smash{a \ge 0}}$~and
    $\set{(\phi_a)^*}_{\smash{a \ge 0}}$ satisfy, uniformly in
    $a,\delta \ge 0$, the $\Delta_2$-condition with
    ${\Delta_2(\phi_a) \!\lesssim \!2^{\smash{\max \set{2,p}}}}$ and
    $\Delta_2((\phi_a)^*) \!\lesssim\! 2^{\smash{\max \set{2,p'}}}$,
    respectively. Moreover, note that
    $(\phi_{p,\delta})_a(t)=\phi_{p, \delta+a}(t)$ for all
    $t,a,\delta\ge 0$, and that for all $0\le a\le b$, we have that
    for every $t \ge0$, it holds that
    $(\varphi_a)^*(t) \ge (\varphi_b)^*(t) $,
    $\varphi_a(t) \le \varphi_b(t) $~if~$p\ge 2$ and
    $(\varphi_a)^*(t) \le (\varphi_b)^*(t) $,
    $\varphi_a(t) \ge \varphi_b(t) $ if $p\le 2$. }
  \end{rem}

%
\begin{ass}[Extra stress tensor]\label{assum:extra_stress}
  We assume that the extra stress tensor 
  $ \SSS\hspace{-0.15em}\colon\hspace{-0.15em} \mathbb{R}^{d \times d}
  \hspace{-0.15em}\to\hspace{-0.15em} \mathbb{R}^{d \times d}_{\textup{sym}} $ belongs to $C^0(\mathbb{R}^{d \times
    d}\hspace{-0.1em},\mathbb{R}^{d \times d}_{\textup{sym}} ) $, satisfies $\SSS (\hspace{-0.05em}\BA\hspace{-0.05em}) \hspace{-0.15em}=\hspace{-0.15em} \SSS 
  (\hspace{-0.05em}\BA^{\textup{sym}}\hspace{-0.05em} )$~for~all~${\BA\hspace{-0.15em}\in\hspace{-0.15em} \mathbb{R}^{d \times d}}\hspace{-0.1em}$, and $\SSS (\mathbf 0)=\mathbf 0$. Furthermore, we
  assume that the tensor $\SSS$ has {\rm $(p, \delta)$-structure}, i.e.,
  for some $p \in (1, \infty)$, $ \delta\in [0,\infty)$, and the
  N-function $\varphi=\varphi_{p,\delta}$ (cf.~\eqref{eq:5}), there
  exist constants $C_0, C_1 >0$ such that
   \begin{equation}
     \label{eq:ass_S}
     \begin{aligned}
       \big({\SSS}(\BA) - {\SSS}(\BB)\big) : \big(\BA-\BB
       \big) &\ge C_0 \,\phi_{\abs{\BA^{\textup{sym}}}}(\abs{\BA^{\textup{sym}} -
         \BB^{\textup{sym}}}) \,,
       \\
       \abs{\SSS(\BA) - \SSS(\BB)} &\le C_1 \,
       \phi'_{\abs{\BA^{\textup{sym}}}}\big(\abs{\BA^{\textup{sym}} -
         \BB^{\textup{sym}}}\big)
     \end{aligned}
   \end{equation}
   are satisfied for all $\BA,\BB \in \mathbb{R}^{d \times d} $.  The
   constants $C_0,C_1>0$ and $p\in (1,\infty)$ are called the 
     {\em characteristics of $\SSS$}.
\end{ass}

\begin{rem}
  {\rm (i) Let $\phi$ be defined in \eqref{eq:5} and let
    $\{\phi_a\}_{a\ge 0}$ be the corresponding family of the~shifted \mbox{N-functions}. Then, the operators 
    $\SSS_a\colon\mathbb{R}^{d\times d}\to \smash{\mathbb{R}_{\textup{sym}}^{d\times
      d}}$, $a \ge 0$, defined, for every $a \ge 0$
    and~$\BA \in \mathbb{R}^{d\times d}$, via
\begin{align}
  \label{eq:flux}
  \SSS_a(\BA) \coloneqq 
  \frac{\phi_a'(\abs{\BA^{\textup{sym}}})}{\abs{\BA^{\textup{sym}}}}\,
  \BA^{\textup{sym}}\,, 
\end{align}
have $(p, \delta +a)$-structure (cf.~Rem.~\ref{rem:phi_a}).  In this case, the characteristics of
$\SSS_a$ depend only on $p\in (1,\infty)$ and are independent of
$\delta \geq 0$ and $a\ge 0$.

{(ii) Note that the $(p,\delta)$-structure contains as particular cases, e.g.,~the Ladyzhenskaya model, the Smagorinski model, power law models, the
Carreau--Yasuda model and the  Powell--Eyring model. 
  }}
\end{rem}

Closely related to the extra stress tensor $\SSS$ with
$(p,\delta)$-structure are the functions
${\BF\colon\setR^{d\times d}\!\to \!\setR^{d\times d}_{\textup{sym}}}$, 
defined, for every $\BA\in \mathbb{R}^{d\times d}$, via
\begin{align}
\begin{aligned}
    \BF(\BA)&\coloneqq (\delta+\vert
    \BA^{\textup{sym}}\vert)^{\smash{\frac{p-2}{2}}}\BA^{\textup{sym}}\,. 
    \end{aligned}\label{eq:def_F}
\end{align}
The connection between
$\SSS,\BF\hspace{-0.05em}\colon\hspace{-0.05em}\setR^{d \times d}
\hspace{-0.05em}\to\hspace{-0.05em} \setR^{d\times d}_{\textup{sym}}$ and
$\phi_a,(\phi_a)^*\hspace{-0.05em}\colon\hspace{-0.05em}\setR^{\ge
  0}\hspace{-0.05em}\to\hspace{-0.05em} \setR^{\ge
  0}$,~${a\hspace{-0.05em}\ge\hspace{-0.05em} 0}$, is best explained
by the following result (cf.~\cite{die-ett,dr-nafsa,dkrt-ldg}).

\begin{prop}
  \label{lem:hammer}
  Let $\SSS$ satisfy Ass.~\ref{assum:extra_stress}, let
  $\varphi$ be defined in \eqref{eq:5}, and let $\BF$ be
  defined in \eqref{eq:def_F}. Then, uniformly with respect to  
  $\BA, \BB \in \setR^{d \times d}$, we have that\vspace{-1mm}
    \begin{align}\label{eq:hammera}
        \begin{aligned}
        \big(\SSS(\BA) - \SSS(\BB)\big)
      :(\BA-\BB ) &\sim  \abs{ \BF(\BA) - \BF(\BB)}^2
      \\
      &\sim \phi_{\abs{\BA^{\textup{sym}}}}(\abs{\BA^{\textup{sym}}
        - \BB^{\textup{sym}}})
      \,,
      \end{aligned}
    \end{align}
        \vspace{-4.5mm}
    \begin{align}
      \label{eq:hammere}
      \hspace{16mm}	\abs{\SSS(\BA) - \SSS(\BB)}
      &\sim   \smash{\varphi'_{\vert
        \BA^{\textup{sym}}\vert}(\vert\BA^{\textup{sym}}-\BB^{\textup{sym}}\vert
        )} 
    \end{align}
    with constants depending only on the characteristics of ${\SSS}$.
\end{prop}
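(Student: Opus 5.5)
Since $\SSS(\BA)=\SSS(\BA^{\textup{sym}})$ and $\BF(\BA)=\BF(\BA^{\textup{sym}})$, and $(\SSS(\BA)-\SSS(\BB)):(\BA-\BB)=(\SSS(\BA)-\SSS(\BB)):(\BA^{\textup{sym}}-\BB^{\textup{sym}})$ because $\SSS$ takes values in $\RRddsym$, it suffices to treat symmetric $\BA,\BB$. The plan is to prove three comparisons: (a) the lower bound of the monotonicity expression by $\phi_{\abs{\BA}}(\abs{\BA-\BB})$, which is the first line of \eqref{eq:ass_S}; (b) the upper bound of the monotonicity expression, from the growth condition; (c) the purely algebraic equivalence $\abs{\BF(\BA)-\BF(\BB)}^2\sim\phi_{\abs{\BA}}(\abs{\BA-\BB})$. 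Given (a)--(c), the remaining claim \eqref{eq:hammere} is immediate: ``$\lesssim$'' is the second line of \eqref{eq:ass_S}, and ``$\gtrsim$'' follows by Cauchy--Schwarz from (a), since $\phi_a(t)\sim \phi_a'(t)\,t$ uniformly in $a$ (Rem.~\ref{rem:phi_a}, $\Delta_2$ uniform). Thus $\phi'_{\abs{\BA}}(t)\,t\lesssim \abs{\SSS(\BA)-\SSS(\BB)}\,t$ with $t=\abs{\BA-\BB}$; divide by $t$ (the case $t=0$ is trivial).

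For (b), use Cauchy--Schwarz and the second inequality in \eqref{eq:ass_S} to get $(\SSS(\BA)-\SSS(\BB)):(\BA-\BB)\le C_1\phi'_{\abs{\BA}}(t)\,t\sim\phi_{\abs{\BA}}(t)$. Together with (a), this gives the equivalence of the first and third quantities in \eqref{eq:hammera}.

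For (c), the main computation, I would use the explicit form $\phi_a(t)\sim(\delta+a+t)^{p-2}t^2$ together with $\delta+\abs{\BA}+\abs{\BA-\BB}\sim \delta+\abs{\BA}+\abs{\BB}$. This reduces the claim to
\[
\abs{\BF(\BA)-\BF(\BB)}\sim(\delta+\abs{\BA}+\abs{\BB})^{\frac{p-2}{2}}\abs{\BA-\BB}.
\]
Write $\BF(\BA)-\BF(\BB)=\int_0^1 D\BF(\BB+s(\BA-\BB))\,\mathrm ds\,(\BA-\BB)$. One checks that $D\BF(\BC)$ is symmetric, positive definite on $\RRddsym$, and has eigenvalues between $\min\{1,\tfrac p2\}(\delta+\abs{\BC})^{\frac{p-2}{2}}$ and $\max\{1,\tfrac p2\}(\delta+\abs{\BC})^{\frac{p-2}{2}}$. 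The upper bound then follows from the standard lemma
\[
\int_0^1(\delta+\abs{\BB+s(\BA-\BB)})^{\frac{p-2}{2}}\,\mathrm ds\sim(\delta+\abs{\BA}+\abs{\BB})^{\frac{p-2}{2}},
\]
valid since $\tfrac{p-2}{2}>-1$. For the lower bound, test with $\BA-\BB$ and use positive definiteness, which gives the same integral from below.

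The main obstacle is this integral lemma in the singular range $p<2$, $\delta=0$, where the integrand may blow up along the segment. I would handle it by splitting into the two cases $\abs{\BA-\BB}\le\tfrac12\max\{\abs{\BA},\abs{\BB}\}$, where the integrand is comparable to its endpoint value, and the opposite case, where one computes $\int_0^1\abs{\BB+s(\BA-\BB)}^{\alpha}\,\mathrm ds$ directly via the distance of the segment to the origin. All constants then depend only on $p$, $C_0$ and $C_1$.
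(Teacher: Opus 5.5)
Your proposal is correct. The paper gives no proof of its own; it defers to \cite{die-ett,dr-nafsa,dkrt-ldg}, and your route is essentially the standard argument found there. You read off the equivalence of the monotonicity expression with $\phi_{\abs{\BA}}(\abs{\BA-\BB})$, and both directions of \eqref{eq:hammere}, directly from Ass.~\ref{assum:extra_stress} via $\phi_a(t)\sim\phi_a'(t)\,t$. You then prove $\abs{\BF(\BA)-\BF(\BB)}\sim(\delta+\abs{\BA}+\abs{\BB})^{\frac{p-2}{2}}\abs{\BA-\BB}$ through the segment integral of $D\BF$, its spectral bounds, and the Acerbi--Fusco-type lemma for the exponent $\frac{p-2}{2}>-1$.

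One detail deserves to be written out. For $\delta=0$ and $p<2$, the map $s\mapsto\BF(\BB+s(\BA-\BB))$ stays absolutely continuous even when the segment meets the origin, because it is continuous with integrable derivative away from a single point; this is what makes the integral representation legitimate.
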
 
\begin{rem}\label{rem:sa}
  {\rm For the operators
    $\SSS_a\colon\mathbb{R}^{d\times
      d}\to\smash{\mathbb{R}_{\textup{sym}}^{d\times d}}$, $a \ge 0$,
    defined in \eqref{eq:flux}, the assertions of 
    Lem.~\ref{lem:hammer} hold with
    $\phi$ 
    replaced by
    $\phi_a $. 
  }
\end{rem}

The following results can be found in~\cite{DK08,dr-nafsa}.

\begin{lem}[Change of Shift]\label{lem:shift-change}
    Let $\varphi$ be defined in \eqref{eq:5} and let $\BF$ be defined in \eqref{eq:def_F}. Then,
  for each $\varepsilon>0$, there exists $c_\varepsilon\geq 1$ (depending only
  on~$\varepsilon>0$ and $p$) such that for every $\BA,\BB\in\smash{\setR^{d \times d}_{\textup{sym}}}$ and $t\geq 0$, it holds
  \begin{align*}
    \smash{\phi_{\abs{\BB}}(t)}&\leq \smash{c_\varepsilon\, \phi_{\abs{\BA}}(t)
    +\varepsilon\, \abs{\BF(\BB) - \BF(\BA)}^2\,,}
    \\
        \smash{\phi_{\abs{\BB}}(t)}&\leq \smash{c_\varepsilon\, \phi_{\abs{\BA}} (t)
    +\varepsilon\, \phi_{\abs{\BA}}\big(\bigabs{\abs{\BB} - \abs{\BA}}\big)\,,}
    \\
    \smash{(\phi_{\abs{\BB}})^*(t)}&\leq \smash{c_\varepsilon\, (\phi_{\abs{\BA}})^*(t)
                                      +\varepsilon\, \abs{\BF(\BB) - \BF(\BA)}^2}\,,
   \\
    \smash{(\phi_{\abs{\BB}})^*(t)}&\leq \smash{c_\varepsilon\, (\phi_{\abs{\BA}})^*(t)
    +\varepsilon\, \phi_{\abs{\BA}}\big(\bigabs{\abs{\BB} - \abs{\BA}}\big)}\,.
  \end{align*}
%
\end{lem}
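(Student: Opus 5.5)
The plan is to reduce all four inequalities to scalar estimates for the shifted functions $\phi_a(t)\sim(\delta+a+t)^{p-2}t^2$ and $(\phi_a)^*(t)\sim((\delta+a)^{p-1}+t)^{p'-2}t^2$ from Rem.~\ref{rem:phi_a}. Write $a=\abs{\BA}$ and $b=\abs{\BB}$. We first record two elementary equivalences:
\begin{align*}
  \phi_a(\abs{b-a})&\sim(\delta+a+\abs{b-a})^{p-2}\abs{b-a}^2\sim(\delta+a\vee b)^{p-2}\abs{b-a}^2\,,\\
  \abs{\BF(\BB)-\BF(\BA)}^2&\sim\phi_a(\abs{\BB-\BA})\ge c\,\phi_a(\abs{b-a})\,.
\end{align*}
The second one follows from Prop.~\ref{lem:hammer} applied to $\SSS_0$, together with $\abs{b-a}\le\abs{\BB-\BA}$ and the monotonicity of $\phi_a$. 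Hence the first and third inequalities follow from the second and fourth. It therefore suffices to prove the scalar statement
\begin{align*}
  \phi_b(t)\le c_\varepsilon\phi_a(t)+\varepsilon\,\phi_a(\abs{b-a})
\end{align*}
and its analogue for the conjugates.

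For this scalar statement we split into cases. \emph{Case 1:} $\abs{b-a}\le\eta(\delta+a+t)$ with $\eta$ small. Then $\delta+b+t\sim\delta+a+t$, so $\phi_b(t)\sim\phi_a(t)$ with constants depending on $p$ and $\eta$. \emph{Case 2:} $\abs{b-a}>\eta(\delta+a+t)$. Then $\delta+b+t\le(1+\eta^{-1})\abs{b-a}$ and $t<\eta^{-1}\abs{b-a}$.
\begin{itemize}
  \item If $p\ge2$, we have $\phi_b(t)\lesssim_\eta\abs{b-a}^{p-2}t^2$. Applying Young's inequality with exponents $\frac p{p-2}$ and $\frac p2$ to the product $\abs{b-a}^{p-2}t^2$ bounds it by $\varepsilon\,\abs{b-a}^p+c_\varepsilon t^p$. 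Then $\abs{b-a}^p\lesssim\phi_a(\abs{b-a})$, because $\abs{b-a}\gtrsim\delta+a$ in this case, and $t^p\le\phi_a(t)$ up to constants.
  \item If $p<2$, we have $\phi_b(t)\le(\delta+t)^{p-2}t^2\lesssim\phi_a(t)\,\bigl((\delta+a+t)/(\delta+t)\bigr)^{2-p}$. This ratio may be large, so I would instead use $\phi_b(t)\le t^p$. I then split according to $t\le\varepsilon'\abs{b-a}$ or not. If $t\le\varepsilon'\abs{b-a}$, then $t^p\le(\varepsilon')^p\abs{b-a}^p\sim(\varepsilon')^p\phi_a(\abs{b-a})$, since $\abs{b-a}\gtrsim\delta+a$ here. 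Otherwise $t\sim_{\varepsilon',\eta}\abs{b-a}\gtrsim\delta+a$, so $t^p\sim\phi_a(t)$.
\end{itemize}
The $p\ge2$ case needs a parallel subsplit, $t\le\varepsilon'\abs{b-a}$ or not, to get the small factor $\varepsilon$ cleanly. In that case $\phi_b(t)\lesssim\abs{b-a}^{p-2}t^2\le(\varepsilon')^2\abs{b-a}^p$, which gives the smallness.

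For the conjugate inequalities I would use the conjugate formula $(\phi_a)^*(s)\sim((\delta+a)^{p-1}+s)^{p'-2}s^2$, which has the same structure with $p$ replaced by $p'$. The cases are now expressed through $s$ versus $(\delta+a)^{p-1}$ and $(\delta+b)^{p-1}$. Alternatively one can use the duality $(\phi_a)^*(\phi_a'(t))\sim\phi_a(t)$ from \eqref{eq:psi'}. Here the error term is still $\phi_a(\abs{b-a})\sim(\delta+a\vee b)^{p-2}\abs{b-a}^2$, which is no longer of the same type as the conjugate. I expect the main obstacle to be this mixed case. Concretely, one must show that when $(\delta+b)^{p-1}$ and $(\delta+a)^{p-1}$ differ greatly, the quantity $(\phi_b)^*(s)$ is controlled either by $(\phi_a)^*(s)$ or by $\varepsilon\,(\delta+a\vee b)^{p}$. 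The second quantity dominates $\varepsilon\,\phi_a(\abs{b-a})$ up to constants, since $\abs{b-a}\sim a\vee b$ in that regime.

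I would handle this mixed case as follows. If $s\ge(\delta+a\vee b)^{p-1}$, both conjugates are comparable to $s^{p'}$. If $s\le\varepsilon'(\delta+a\vee b)^{p-1}$, then $(\phi_b)^*(s)\le(\delta+a\vee b)^{p-1\cdot(\ldots)}$-type bounds give at most $\varepsilon\,(\delta+a\vee b)^p$. In the intermediate range the two conjugates are comparable, with constants depending on $\varepsilon'$.
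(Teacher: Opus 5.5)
Your argument is correct, but it is not the route the paper relies on. The paper gives no proof and quotes the lemma from \cite{DK08,dr-nafsa}, where it is proved for general N-functions $\phi$ with $\phi'(t)\sim t\,\phi''(t)$, so explicit power formulas are not available there. The structural argument runs roughly as follows. One proves the pointwise bound $\phi'_{b}(t)\lesssim\phi'_{a}(t)+\phi'_{a}(\abs{b-a})$ and writes $\phi_b(t)\sim\phi'_b(t)\,t$. Then the $\varepsilon$-Young inequality \eqref{ineq:young} for $\phi_a$, together with \eqref{eq:psi'}, gives $\phi'_a(\abs{b-a})\,t\le\varepsilon\,\phi_a(\abs{b-a})+c_\varepsilon\,\phi_a(t)$. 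The conjugate versions follow by duality from $(\phi_a)^*\sim(\phi^*)_{\phi'(a)}$ and $(\phi^*)_{\phi'(a)}(\abs{\phi'(b)-\phi'(a)})\sim\phi_a(\abs{b-a})$, i.e.\ they are the primal statement for $\phi^*$ with shifts $\phi'(a),\phi'(b)$. That last relation removes your worry that $\phi_a(\abs{b-a})$ is ``not of the same type'' as the conjugates. Your reduction of the $\BF$-versions to the $\phi_{\abs{\BA}}(\abs{\abs{\BB}-\abs{\BA}})$-versions is sound. Your case analysis on the formulas of Rem.~\ref{rem:phi_a} is more elementary and gives constants visibly depending only on $p$, which is all the lemma claims. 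However, it is tied to $\phi_{p,\delta}$ and makes the conjugate part look harder than it is.

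A few spots in the conjugate part need tightening.
\begin{itemize}
\item In the mixed case you need $(\delta+a\vee b)^p\lesssim\phi_a(\abs{b-a})$, not that the former ``dominates'' the latter. This requires $\abs{b-a}\gtrsim\delta+a\vee b$, not merely $\abs{b-a}\sim a\vee b$. It holds once $\delta+a$ and $\delta+b$ differ by a factor $K\ge 2$: for instance, $\delta+b\ge K(\delta+a)$ gives $b-a\ge\frac{K-1}{K}(\delta+b)$.
\item When the ratio is at most $K$, one simply has $(\phi_b)^*\sim_K(\phi_a)^*$; state this case explicitly.
\item The bound for $s\le\varepsilon'(\delta+a\vee b)^{p-1}$ has to be computed, splitting by the sign of $p'-2$ and using $(p-1)p'=p$. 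For $p\le 2$ one gets $(\phi_b)^*(s)\lesssim(\delta+a\vee b)^{(p-1)(p'-2)}s^2\le(\varepsilon')^2(\delta+a\vee b)^p$. For $p>2$ one gets $(\phi_b)^*(s)\lesssim s^{p'}\le(\varepsilon')^{p'}(\delta+a\vee b)^p$.
\item The extra subsplit for $p\ge 2$ in the primal case is unnecessary. Your Young step already yields $\varepsilon\abs{b-a}^p+c_\varepsilon t^p$, and $r^p\lesssim\phi_a(r)$ for all $r\ge 0$ when $p\ge 2$.
\end{itemize}
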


\subsection{The $p$-Stokes system} 
Let us briefly recall some well-known~facts about the $p$-Stokes system \eqref{eq:p-stokes}. For $p\in (1,\infty)$, we define the function spaces
\begin{align*}
  \begin{aligned}
    V&\coloneqq (W^{1,p}(\Omega))^d\,,&&\Vo\coloneqq (W^{1,p}_0(\Omega))^d\,,\\
    Q&\coloneqq L^{p'}(\Omega)\,,&&\Qo\coloneqq
    L_0^{p'}(\Omega)\coloneqq \big\{f\in
    L^{p'}(\Omega)\;|\;\mean{f}_\Omega=0\big\}\,.
  \end{aligned}
\end{align*}
With this notation, the weak formulation of problem \eqref{eq:p-stokes} is the following:
    
\textbf{Problem (Q).} For given $\bff \hspace{-0.1em}\in \hspace{-0.1em}\Vo^*$, find
$(\bv,q) \hspace{-0.1em}\in \hspace{-0.1em}\Vo\times \Qo$ such that for all
${(\bz,z)\hspace{-0.1em}\in \hspace{-0.1em}\Vo\times Q}$, it holds
\begin{align}\label{eq:q1}
  (\SSS(\BD\bv),\BD\bz)_\Omega-(q,\divo\bz)_\Omega&=\langle \bff,\bz\rangle _{\Vo}\,,
  \\
  (\divo\bv,z)_\Omega&=0\label{eq:q2}\,.
\end{align}

Alternatively, we can reformulate Problem (Q) ``hiding'' the
pressure. To this end we define the spaces
\begin{align*}
    V_\divo\coloneqq \{\bz\in V\mid \divo \bz=0\}\,,\quad \Vo_\divo\coloneqq V_\divo\cap \Vo\,.
\end{align*}

\textbf{Problem (P).} For given $\bff \in \Vo^*$, find $\bv\in \Vo_\divo$ 
such~that~for~all~${\bz\in \Vo_\divo}$, it holds
\begin{align}\label{eq:p}
    (\SSS(\BD\bv),\BD\bz)_\Omega&=\langle \bff,\bz\rangle _{\Vo}\,.
\end{align}

Note that $\Vo_\divo\hspace{-0.17em}\neq\hspace{-0.17em} \emptyset$
by the solvability of the divergence equation
(cf.~\cite[Thm.~6.6]{john}). This and the theory of monotone operators
yield the existence of a weak solution of Problem~(P) if $p>
1$, and the apriori estimate
\begin{align}
  \label{eq:apri-cont}
  \|\bv \|_{1,p,\Omega} \lesssim \|\bff\|_{-1,p'} +\delta\, \abs{\Omega}\,.
\end{align}
De Rham's lemma 
then ensures the solvability of Problem (Q).
%

\subsection{DG framework}\label{sec:dg-space}

\subsubsection{Triangulations}

\!In \hspace{-0.1mm}what \hspace{-0.1mm}follows, \hspace{-0.1mm}we
\hspace{-0.1mm}always \hspace{-0.1mm}denote \hspace{-0.1mm}by
\hspace{-0.1mm}$\mathcal{T}_h$,
\hspace{-0.1mm}$h\!>\!0$,~\hspace{-0.1mm}a~\hspace{-0.1mm}\mbox{family}~\hspace{-0.1mm}of
uniformly \hspace{-0.1mm}shape \hspace{-0.1mm}regular
\hspace{-0.1mm}and \hspace{-0.1mm}conforming
\hspace{-0.1mm}triangulations
\hspace{-0.1mm}of~\hspace{-0.1mm}${\Omega\hspace{-0.15em}\subseteq\hspace{-0.15em}
  \mathbb{R}^d}$,~${d\hspace{-0.15em}\in\hspace{-0.15em}
  \set{2,3}}$,~\hspace{-0.1mm}cf.~\hspace{-0.1mm}\cite{BS08},~\hspace{-0.1mm}each
consisting of \mbox{$d$-dimensional} simplices $K$.
The collection of all mesh faces will be denoted $\Gamma_h$.
The parameter
${h>0}$, refers to the maximal mesh-size of $\mathcal{T}_h$, i.e., if we
define $h_K\coloneqq \textup{diam}(K)$ for all~${K\in
\mathcal{T}_h}$,~then ${h\coloneqq \max_{K\in
    \mathcal{T}_h}{h_K}}$. The (local) mesh-size function
$h_{\mathcal{T}}\colon \overline{\Omega}\to \mathbb{R}$ is defined via
${h_{\mathcal{T}}|_K\coloneqq h_K} $ for all $K\in \mathcal{T}_h$.
The (local) face-size function
$h_{\Gamma}\colon \Gamma_h\to \mathbb{R}$ is defined via
$h_{\Gamma}|_F\coloneqq h_F \coloneqq \text{diam}(F)$ for all
$F\in \Gamma_h$.  For simplicity, we always assume that $h \le 1$.
For a simplex $K\! \in\! \mathcal{T}_h$, we denote by $\rho_K\!>\!0$,
the supremum of diameters~of~inscribed~balls. We assume that
there~is~a~constant~$\omega_0\!>\!0$, independent of $h>0$, such that
${h_K}{\rho_K^{-1}}\le \omega_0$ for every $K \in \mathcal{T}_h$. The
smallest such constant~is~called~the~{\it chunkiness}~of
$(\mathcal{T}_h)_{h>0}$.  By $\Gamma_h^{i}$, we denote the~interior
faces and put $\Gamma_h\coloneqq \Gamma_h^{i}\cup \partial\Omega$. For
a face $F\vcentcolon= \partial K \cap \partial K'$, we use the
notation $S_F\vcentcolon= K \cup K'$.  
%
We introduce the following scalar
products~on~$\Gamma_h$
\begin{align*}
  \skp{f}{g}_{\Gamma_h} \coloneqq  \smash{\sum_{F  \in \Gamma_h} {\langle f, g\rangle_F }}\,,\quad\text{ where }\quad\langle f, g\rangle_F \coloneqq \int_F  f g \,\textup{d}s\quad\text{ for all }F \in \Gamma_h\,,
\end{align*}
if all the integrals are well-defined. Similarly, we define the products 
$\skp{\cdot}{\cdot}_{\partial\Omega}$ and~$\skp{\cdot}{\cdot}_{\Gamma_h^{i}}$. 

\subsubsection{Broken function spaces and projections}
For every $\ell \hspace{-0.1em}\in\hspace{-0.1em}
\setN_0$~and~${K\hspace{-0.1em}\in\hspace{-0.1em} \mathcal{T}_h}$, we
denote by ${\mathcal P}_\ell(K)$, the space of polynomials of degree at
most $\ell$ on $K$. Moreover, we set $\mathcal P_{-1}(K)\coloneqq
\{0\}$ for any ${K\hspace{-0.1em}\in\hspace{-0.1em} \mathcal{T}_h}$. Then, for given~$p\in (1,\infty)$, and
$k \in \setN_0$, we define the spaces
\begin{align*}
    Q_h^k&\coloneqq \big\{ q_h\in L^1(\Omega)\fdg q_h|_K\in \mathcal{P}_k(K)\text{ for all }K\in \mathcal{T}_h\big\}\,,\\
    V_h^k&\coloneqq \big\{\bv_h\in L^1(\Omega)^d\fdg \bv_h|_K\in \mathcal{P}_k(K)^d\text{ for all }K\in \mathcal{T}_h\big\}\,,\\
    X_h^k&\coloneqq \big\{\BX_h\in L^1(\Omega) ^{d \times d}\fdg  \BX_h|_K\in \mathcal{P}_k(K) ^{d \times d}\text{ for all }K\in \mathcal{T}_h\big\}\,,\\
        W^{1,p}(\mathcal T_h)&\coloneqq \big\{\bw_h\in
        L^1(\Omega)^d\fdg \bw_h|_K\in W^{1,p}(K)^d\text{ for all
        }K\in \mathcal{T}_h\big\}\,.
\end{align*}
In addition, we define the spaces $W^{1,\psi}(\mathcal T_h)\!\coloneqq \!\big\{\bw_h\!\in \!L^1(\Omega)^d\mid
\bw_h|_K\!\in\! W^{1,\psi}(K)^d\text{ for all }K\in
\mathcal{T}_h\big\}$, $X_h^{\smash{k,\textup{sym}}}\coloneqq X_h^k\cap
  L^1(\Omega;\mathbb{R}^{d\times d}_{\textup{sym}})$,  and
$\Qhko\coloneqq Q_h^k\cap \Qo$. 

For every $\bw_h\hspace{-0.1em}\in\hspace{-0.1em} \WDG$, we denote by
$\nabla_h \bw_h\hspace{-0.1em}\in\hspace{-0.1em} L^p(\Omega)^{d\times
  d}$, the \textit{local gradient},~defined via
$(\nabla_h \bw_h)|_K\hspace{-0.1em}\coloneqq
\hspace{-0.1em}\nabla(\bw_h|_K)$
for~all~${K\hspace{-0.1em}\in\hspace{-0.1em}\mathcal{T}_h}$.  For
every $K\hspace{-0.1em}\in\hspace{-0.1em} \mathcal{T}_h$,
${\bw_h\hspace{-0.1em}\in\hspace{-0.1em} \WDG}$~admits~an
interior trace ${\textrm{tr}^K(\bw_h)\in L^p(\partial K)^d}$. For each
face $F \in \Gamma_h$ of a given simplex $K\in \mathcal{T}_h$, we
define this interior trace by
$\smash{\textup{tr}^K_F (\bw_h)\in L^p(F )^d}$. Then, for every
$\bw_h\in \WDG$ and interior faces $F \in \Gamma_h^{i}$ shared by
adjacent elements $K^-_F , K^+_F \in \mathcal{T}_h$, we~define the \textit{average} and \textit{normal jump}, resp., of $\bw_h$ on
$F $, via 
\begin{align*}
  \{\bw_h\}_F &\coloneqq \smash{\frac{1}{2}}\big(\textup{tr}_F ^{K^+}(\bw_h)+
  \textup{tr}_F ^{K^-}(\bw_h)\big)\in  L^p(F )^d\,,
  \\
  \llbracket\bw_h\otimes\bn\rrbracket_F 
  &\coloneqq \textup{tr}_F ^{K^+}(\bw_h)\otimes\bn^+_F +
    \textup{tr}_F ^{K^-}(\bw_h)\otimes\bn_F ^-  \in L^p(F )^{d\times d}\,,
\end{align*}
where $\bn_F ^\pm $ denote the outward unit normal vector of $K_F^\pm$.  Moreover, for every $\bw_h\in \WDG$ and boundary faces
$F \!\in\! \partial\Omega$, we define {\it boundary averages} and
{\it boundary~jumps},~resp.,~via
\begin{align*}
  \{\bw_h\}_F &\coloneqq \textup{tr}^\Omega_F (\bw_h) \in L^p(F)^d\,,
  \\
  \llbracket \bw_h\otimes\bn\rrbracket_F
   &\coloneqq  \textup{tr}^\Omega_F (\bw_h)\otimes\bn \in L^p(F )^{d\times d}\,,
\end{align*}
where $\bn\colon\partial\Omega\to \mathbb{S}^{d-1}$ denotes the unit
normal vector field to $\Omega$ pointing outward.  Analogously, we
define $\{\BX_h\}_F $ and $ \llbracket\BX_h\bn\rrbracket_F $~for
all $\BX_h \in \Xhk$ and $F \in \Gamma_h$. Furthermore, if there is
no danger~of~confusion, we will omit the index
$F \in \Gamma_h$,~in~particular, if we interpret jumps and averages as
global functions defined on the whole of $\Gamma_h$.
In addition, for every $\bw_h\in \WDG$, we  introduce the \textit{DG norm}, via
\begin{align*}
    \|\bw_h\|_{\nabla,p,h}\coloneqq \|\nabla_h\bw_h\|_{p,\Omega}+
    \big\|h_\Gamma^{-1/p'}\jump{\bw_h\otimes \bn}\big\|_{p,\Gamma_h}\,,
\end{align*}
which turns $\WDG$ into a Banach space\footnote{The completeness of
  $\WDG$ equipped with $\|\cdot\|_{\nabla,p,h}$, for every fixed
  $h>0$, follows from ${\|\bw_h\|_p\leq c\,\|\bw_h\|_{\nabla,p,h}}$
  for all $\bw_h\in \smash{\WDG}$ (cf.~\cite[Lem.~A.9]{dkrt-ldg})
  and an element-wise application of the trace
  theorem.}.

\subsubsection{DG gradient and jump operators}

For every $k\in \mathbb{N}_0$, we define the \textit{(global) jump
  operator} $\smash{\Rhk \colon\WDG \to X_h^k}$ (using Riesz
representation) for every $\bw_h\in \smash{\WDG}$ via 
\begin{align*}
  \big(\Rhk\bw_h,\BX_h\big)_\Omega=\big\langle
  \llbracket\bw_h\otimes\bn\rrbracket,\{\BX_h\}\big\rangle_{\Gamma
  _h}\quad \textrm{for all }\BX_h\in X_h^k\,.
\end{align*}
In addition, for every $k\in \mathbb{N}_0$, the \textit{DG gradient operator} 
$  \Ghk\colon\WDG\to L^p(\Omega)$ is  defined, for every $\bw_h\in \smash{\WDG}$, via
\begin{align}
  \boldsymbol{\mathcal G}^k_{h}\bw_h\coloneqq 
  \nabla_h\bw_h-\boldsymbol{\mathcal R}^k_h\bw_h
  \quad\text{ in }L^p(\Omega)^{d\times d}\,.\label{eq:DGnablaR} 
\end{align}
Note that for every $\bv\in \Vo$, we have that $\Ghk \bv=\nabla\bv
$ in $L^p(\Omega)^{d\times d}$.
Owing to \cite[{(A.26)--(A.28)}]{dkrt-ldg},
for every $p\in (1,\infty)$ and $k\in \setN$, there exists a constant
$c>0$ such that for every $\bw_h\in \smash{\WDG}$, it holds
\begin{align}\label{eq:eqiv0}
    c^{-1}\,\|\bw_h\|_{\nabla,p,h}\leq
  \big\|\Ghke\bw_h\big\|_{p,\Omega}+
\big\|h_\Gamma^{-1/p'}\jump{\bw_h\otimes
  \bn}\big\|_{p,\Gamma_h}\leq c\,\|\bw_h\|_{\nabla,p,h}\,. 
\end{align}
For a generalized N-function $\psi$, we define the
pseudo-modular\footnote{The definition of a pseudo-modular can be
  found in \cite{Mu}.}
$\smash{m_{\psi,h}}\colon W^{1,\psi}(\mathcal T_h)\to \mathbb{R}^{\ge
  0}$, for every $\bw_h\in W^{1,\psi}(\mathcal T_h)$ via
\begin{align}\label{eq:mod1}
  m_{\psi,h}(\bw_h) &\coloneqq
 \int_{\Gamma_h}h_\Gamma \psi(h_\Gamma^{-1}\jump{\bw_h\otimes \bn})
\end{align}
For $\psi = \phi_{p,0}$, we have that
$m_{\psi,h}(\bw_h)=\|h_\Gamma^{-1/p'}\jump{\bw_h\otimes
  \bn}\|_{p,\Gamma_h}^p$~for~all~${\bw_h\in W^{1,\psi}(\mathcal T_h)}$.

\subsubsection{Symmetric \hspace{-0.1mm}DG \hspace{-0.1mm}gradient \hspace{-0.1mm}and \hspace{-0.1mm}symmetric \hspace{-0.1mm}jump \hspace{-0.1mm}operators}

For ${\bw_h\hspace{-0.1em}\in\hspace{-0.1em} \WDG}$, we denote by
$\BD_h\hspace{-0.1em}\bw_h\hspace{-0.1em}\coloneqq
\hspace{-0.1em}[\nabla_h\bw_h]^{\textup{sym}}\hspace{-0.1em}\in
\hspace{-0.1em}L^p(\Omega;\mathbb{R}^{d\times d}_{\textup{sym}})$, the
\textit{local symmetric gradient}, and define the \textit{symmetric DG
  gradient operator}
$ \smash{\Dhk\!\colon\!\WDG \!\to\! L^p(\Omega;\mathbb{R}^{d\times
    d}_{\textup{sym}})}$ 
via
$\smash{\Dhk\bw_h\hspace{-0.1em}\coloneqq
  \hspace{-0.1em}[\Ghk\bw_h]^{\textup{sym}}}
  $, i.e., if we define, for every ${k\in \setN_0}$, 
the \textit{symmetric jump operator}
$\Rhks\colon\WDG\to X_h^{\smash{k,\textup{sym}}}$ via
$\Rhks\bw_h\coloneqq [\Rhk\bw_h]^{\textup{sym}}\in
  X_h^{\smash{k,\textup{sym}}}$, for every ${\bw_h\in
  \WDG}$, we have that
\begin{align*}
  \smash{\Dhk\bw_h =\BD_h\bw_h
  -\Rhks\bw_h
  \quad\text{ in }L^p(\Omega;\mathbb{R}^{d\times d}_{\textup{sym}})\,.}
\end{align*}
Additionally, for every $\bw_h\in \WDG$, we introduce the
\textit{symmetric DG norm}~via
\begin{align*}
  \smash{\|\bw_h\|_{\BD,p,h}\coloneqq \|\BD_h\bw_h\|_{p,\Omega}
  +\smash{\big\|  h_\Gamma^{-1/p'} \llbracket\bw_h\otimes\bn\rrbracket\big\|_{p,\Gamma_h}}}\,.
\end{align*}
We make frequent use of the discrete Korn inequality on $V_h^k$
(cf.\ \cite[Prop.~2.4]{kr-pnse-ldg-1}). 

\begin{prop}[Discrete Korn inequality]\label{korn}
  For every $p\in (1,\infty)$ and $k\in \setN$, there holds for every
  $\bv_h\in V_h^k$
  \begin{align*}
    \smash{\|\bv_h\|_{\nabla,p,h}\lesssim\|\bv_h\|_{\BD,p,h}}
  \end{align*}
  with a constant depending only on $k$, $\omega_0$ and $p$. 
\end{prop}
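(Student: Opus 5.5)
The plan is a standard compactness--contradiction argument. The key tool is a discrete Korn inequality for broken $W^{1,p}$ functions on a single simplex together with local control of jumps. It is cleanest to reduce to the case $p$ fixed and work locally with rigid motions, then patch.

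\emph{Step 1 (local Korn on elements).} On a reference simplex $\hat K$ the finite-dimensional space $\mathcal P_k(\hat K)^d$ carries the seminorms $\|\nabla\cdot\|_{p,\hat K}$ and $\|\BD\cdot\|_{p,\hat K}$. These have the same kernel modulo the rigid motions $\mathcal{RM}=\{\ba+\BB\bx:\BB^\top=-\BB\}$. Hence, by equivalence of norms on the quotient and scaling (with constants depending on $k,\omega_0,p$),
\[
\inf_{\br\in\mathcal{RM}}\|\nabla(\bv_h-\br)\|_{p,K}\lesssim\|\BD\bv_h\|_{p,K}\quad\text{for every }K\in\mathcal T_h.
\]
Choose $\br_K$ as a minimiser. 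Then $\|\nabla_h\bv_h\|_{p,\Omega}\lesssim\|\BD_h\bv_h\|_{p,\Omega}+\|\nabla_h\br\|_{p,\Omega}$, where $\br|_K=\br_K$ is a piecewise rigid motion. It therefore suffices to bound $\|\nabla_h\br\|_{p,\Omega}$ and the jump term of $\br$ by $\|\bv_h\|_{\BD,p,h}$.

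\emph{Step 2 (jumps of the piecewise rigid field).} On each face $F$, write $\jump{\br\otimes\bn}=\jump{\bv_h\otimes\bn}-\jump{(\bv_h-\br)\otimes\bn}$. Apply the discrete trace inequality together with a Poincaré inequality to $\bv_h-\br_K$; the constant part of $\br_K$ is also fixed to make the elementwise mean of $\bv_h-\br_K$ vanish. This gives
\[
\|h_\Gamma^{-1/p'}\jump{\br\otimes\bn}\|_{p,\Gamma_h}\lesssim\|\bv_h\|_{\BD,p,h}.
\]

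\emph{Step 3 (Korn for piecewise rigid motions --- the main obstacle).} We need
\[
\|\nabla_h\br\|_{p,\Omega}\lesssim\|h_\Gamma^{-1/p'}\jump{\br\otimes\bn}\|_{p,\Gamma_h}
\]
for every piecewise rigid $\br$, with a constant uniform in $h$. The boundary jumps are included, which removes the global rigid kernel. My first attempt is to avoid compactness and invoke the known broken Korn inequality of Brenner type,
\[
\|\nabla_h\bw\|_{p}\lesssim\|\BD_h\bw\|_p+\|h_\Gamma^{-1/p'}\jump{\bw}\|_{p,\Gamma_h},
\]
for $\bw\in W^{1,p}(\mathcal T_h)$ with the Dirichlet boundary jumps included. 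Brenner proves it for $p=2$, and the $L^p$ version is available, e.g.\ in the paper cited as \cite{kr-pnse-ldg-1}. Since $\BD_h\br=0$, this gives the claim directly.

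If a self-contained argument is required, I would reproduce Brenner's approach. Let $\BE_h\br$ be a conforming nodal-averaging (Oswald) interpolant into continuous piecewise polynomials vanishing on $\partial\Omega$. Then
\[
\|\nabla_h(\br-\BE_h\br)\|_p\lesssim\|h_\Gamma^{-1/p'}\jump{\br\otimes\bn}\|_{p,\Gamma_h},
\]
and by the continuous Korn inequality in $W^{1,p}_0$,
\[
\|\nabla\BE_h\br\|_p\lesssim\|\BD\BE_h\br\|_p\le\|\BD_h(\BE_h\br-\br)\|_p.
\]
Both terms are bounded by the jumps. The delicate point is the $h$-uniform local estimate for the averaging operator in $L^p$ for piecewise polynomials of degree $k$, which follows from scaling and shape regularity.

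Combining Steps 1--3 yields $\|\bv_h\|_{\nabla,p,h}\lesssim\|\bv_h\|_{\BD,p,h}$. In fact the Oswald-interpolant argument in Step 3 applies directly to $\bv_h$ itself, so Steps 1--2 can be bypassed: one obtains
\[
\|\nabla_h\bv_h\|_p\le\|\nabla_h(\bv_h-\BE_h\bv_h)\|_p+\|\nabla\BE_h\bv_h\|_p\lesssim\text{jumps}+\|\BD_h\bv_h\|_p,
\]
which is the route I would finally write.
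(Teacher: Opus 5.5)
\textbf{Verdict.} Your final argument is correct. The paper gives no proof of its own and simply quotes \cite[Prop.~2.4]{kr-pnse-ldg-1}, so the comparison is with that citation rather than with an argument in the text.

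\textbf{Your route.} Take $\BE_h$ to be an averaging operator into $V_h^k\cap\Vo$, with boundary nodes set to zero. It satisfies $\|\nabla_h(\bv_h-\BE_h\bv_h)\|_{p,\Omega}\lesssim\|h_\Gamma^{-1/p'}\jump{\bv_h\otimes\bn}\|_{p,\Gamma_h}$, where the boundary faces are included. Apply Korn's inequality in $W^{1,p}_0(\Omega)^d$ to $\BE_h\bv_h$; its constant depends only on $p$ and $d$, for instance via extension by zero and Calder\'on--Zygmund. Then use $\|\BD\BE_h\bv_h\|_{p,\Omega}\le\|\BD_h\bv_h\|_{p,\Omega}+\|\nabla_h(\BE_h\bv_h-\bv_h)\|_{p,\Omega}$. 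This yields the claim, with a constant depending only on $k$, $\omega_0$, $p$ (and $d$). It is the standard argument for broken Korn inequalities on piecewise polynomials.

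\textbf{What your version buys over the citation.} It is self-contained modulo two ingredients, and both are already in this paper. The local $L^p$ estimate you call the delicate point is exactly the first step in the proof of Prop.~\ref{prop:n-function_E}, with $\psi(t)=t^p$. That step applies to the paper's smoothing operator $\BE_h\colon V_h^k\to\Vo$. Its proof rests only on the $L^2$ estimate from \cite{VZ.2018.III} and inverse estimates, not on any Korn-type argument, so there is no circularity.

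\textbf{Minor issues.} The compactness--contradiction framing you announce at the start is never used, and Steps 1--3 can be dropped. Also, invoking \cite{kr-pnse-ldg-1} in Step 3 would be circular, since that reference states precisely the inequality being proved.
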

For the symmetric DG norm holds a similar relation like
\eqref{eq:eqiv0} (cf.~\cite[Prop.~2.5]{kr-pnse-ldg-1}), namely for
every $p\in (1,\infty)$ and $k\in \setN$, there exists a
constant~${c>0}$ such that for every $\bw_h\in \WDG$, it holds
        \begin{align}
          \smash{c^{-1}\,\|\bw_h\|_{\BD,p,h}
          \leq \big\|\Dhke\bw_h\big\|_{p,\Omega}
+\big\|  h_\Gamma^{-1/p'}\llbracket\bw_h\otimes\bn\rrbracket\big\|_{p,\Gamma_h}
          \leq c\,\|\bw_h\|_{\BD,p,h}\,.}
          \label{eq:equi2}
        \end{align}

\subsubsection{DG divergence operator}
The \textit{local divergence} is defined, for every
$\bw_h\!\in\!\WDG$, via
$\textup{div}_h\bw_h\coloneqq \text{tr}(\nabla_h \bw_h)\!\in\!
L^p(\Omega)$.  In addition, for every~${k\in \setN_0}$, the
\textit{DG divergence operator} \linebreak
$\Divhk\hspace{-0.17em}\colon\hspace{-0.17em}\WDG\hspace{-0.17em}\to
\hspace{-0.17em}L^p(\Omega)$ \hspace{-0.2mm}is
\hspace{-0.2mm}defined,~\hspace{-0.2mm}for~\hspace{-0.2mm}every~\hspace{-0.2mm}${\bw_h\hspace{-0.17em}\in\hspace{-0.17em}
  \WDG}$, via
$\Divhk\bw_h\coloneqq \text{tr}(\Ghk\bw_h)
  =\text{tr}(\Dhk\bw_h)$ $\in L^p(\Omega)$, i.e.,
\begin{align*}    
  \smash{\Divhk\bw_h=\textup{div}_h\bw_h-\textup{tr}(\Rhk\bw_h)\quad\text{ in }L^p(\Omega)\,.}
\end{align*}
In particular, for every $\bw_h\in \WDG$ and $z_h \in \smash{Q_h^k}$,
we have that
\begin{align*}
    \big(\Divhk\bw_h,z_h\big)_\Omega&=(\textup{div}_h\bw_h,z_h)_\Omega
    -\big\langle \llbracket
    \bw_h\cdot\bn\rrbracket,\{z_h\}\big\rangle_{\Gamma_h}
    \\[-0.5mm]
    &=-(\bw_h,\nabla_h z_h)_\Omega +\big\langle \{\bw_h\cdot\bn\},
    \llbracket z_h\rrbracket\big\rangle_{\Gamma_h^{i}}    \,.
\end{align*}
and thus, for every $\bv_h\in \Vhk$ and $z_h \in \Qhke $
it holds
\begin{align}
  \label{eq:div-dg.0a}
  \begin{aligned}
    \big(\Divhke\bv_h,z_h\big)_\Omega&=- \big (\bv_h, \mathcal {G}_h^k z_h\big )_\Omega\,.
  \end{aligned}
\end{align}
\subsubsection{The smoothing operator}\label{sec:smoothing}

We employ here a smoothing operator
$\BE_h \coloneqq \BE_{h}^k \colon \Vhk \to \Vo$, $k \in
\setN$, 
similar to the one introduced in \cite{KZ.2020,KVZ.2021}.  The operator, for every
$\bw_h\in \Vhk$, is constructed as:
\begin{alignat*}{2}
  \BE_h \bw_h &\coloneqq \BE_h^{(1)} \bw_h + \BE_h^{(2)} \bw_h
 +\BE_h^{(3)} \bw_h, 
\end{alignat*}
where the aim is that the following conditions are satisfied:
\begin{itemize}
  \item $\BE^{(1)}_h$ ensures that $\BE_h$ is stable in an Orlicz--Sobolev sense;
    see Prop.~\ref{prop:n-function_E} below.
  \item $\BE^{(2)}_h$ ensures that $\BE_h$ preserves $(k-1)$-moments on facets:
    \begin{equation}\label{eq:facet_moments}
      (\BE_h\bw_h, \bz_F)_F 
      = 
      (\leftavg\bw_h\rightavg,\bz_F)_F
      \qquad \forall\, F \in \facetsint,\, \bz_F \in \mathcal{P}_{k-1}(F)^d.
    \end{equation}
    
  \item $\BE_h^{(3)}$ ensures that $\BE_h$ preserves the discrete divergence:
        \begin{align}
          \label{eq:divo}
          \Divhke \bz_h =\divo \BE_h\bz_h
          \qquad \forall\, \bz_h\in \Vhk.
        \end{align}
\end{itemize}

where each operator is defined as:
	
          %
	\begin{itemize}
		\item \textit{$\BE_h^{(1)}$ (simplified nodal averaging):} Denote by $\mathcal{L}^{\mathrm{int}}_k(\mathcal{T}_h)$ the set of interior Lagrange nodes on $\mathcal{T}_h$ of degree $k$ and for every $y\in  \mathcal{L}^{\mathrm{int}}_k(\mathcal{T}_h)$ by $\varphi_y^k\in \mathcal{P}_k(\mathcal{T}_h)$,~the~\mbox{corresponding} basis function. In~addition, for every 
      $y \hspace*{-0.1em} \in\hspace*{-0.1em} \mathcal{L}^{\mathrm{int}}_k(\mathcal{T}_h)$  fix an arbitrary  $K_y \hspace*{-0.1em} \in\hspace*{-0.1em}  \mathcal{T}_h$~such~that~$y \hspace*{-0.1em} \in\hspace*{-0.1em}  K_y $. Then, the operator 
      $\BE^{(1)}_h\colon V_h^k \to V_h^k\cap \Vo$,
      for every $\bw_h\in \Vhk$ is defined as:
		\begin{align*}
      \BE^{(1)}_h  (\bw_h) \coloneqq  \sum_{y \in \mathcal{L}^{\mathrm{int}}_k(\mathcal{T}_h)} (\bw_h|_{K_y })(y ) \varphi_y ^k
		\end{align*}
		\item \textit{$\BE_h^{(2)}$ (facet bubble smoother):} Denote for
		$F\in \Gamma_h$ by $\mathcal{L}_{k-1}(F)$ the set of Lagrange nodes on $F$~of~degree $k-1$ and by $\varphi_F \coloneqq \varphi_{y_1}^1\cdot \ldots\cdot\varphi_{y_d}^1 \in \mathcal{P}_d(F)\cap W^{1,1}_0(F)$, where $y_1,\dots,y_d\in \mathcal{L}_1(F)$ are such that
		$F=\textup{conv}\{y_1,\dots, y_d\}$, the corresponding facet bubble function.
		Then, the operator $\BE_h^{(2)} \colon 
\Vhk \to \mathcal P_{k+d-1}(\tria)^d \cap \Vo$,
for every $\bw_h\in \Vhk$,~is~\mbox{defined}~via
		\begin{align*}
			\BE^{(2)}_h(\bw_h) \coloneqq  
            \smashoperator[l]{\sum_{F \in \Gamma_h^{i}}}
            \smashoperator[r]{\sum_{y \in \mathcal{L}_{k-1}(F)}}
           \boldsymbol{\mathcal{Q}}_F( \leftavg \bw_h \rightavg - \BE_h^{(1)}(\bw_h))(y  )\, \varphi^{k-1}_y \, \varphi_F
		\end{align*}
		where $\boldsymbol{\mathcal{Q}}_F\!\colon \!L^1(F)^d \!\to \! \mathcal{P}_{k-1}(F)^d$ is the weighted
		$L^2$-projection,
    defined for every ${\bw_h\!\in\! L^1(F)^d}$ by requiring:
		\begin{align*}
			(\varphi_F\boldsymbol{\mathcal{Q}}_F \bw_h, \bz_F)_F =( \bw_h, \bz_F)_F
      \qquad
      \forall\, \bz_F\in \mathcal{P}_{k-1}(F)^d.
		\end{align*}
		\item 
                  \textit{$\BE_h^{(3)}$ (divergence correction):}
                  For any $K\in \triang$, denote by $\triaMK$ the collection of elements that result from the barycentric refinement of $K$; define the local `velocity' and `pressure' spaces for $\ell\geq d$:
                  \begin{equation*}
                    \Vo^{\ell}_h(\triaMK) 
                    \coloneqq \mathcal{P}_\ell(\triaMK) \cap W^{1,1}_0(K)
                    \qquad 
                    \Qo^{\ell-1}_h(\triaMK) 
                    \coloneqq 
                    \mathcal{P}_{\ell -1}(\triaMK)\cap L^1_0(K).
                  \end{equation*}
That is, $\Vo^\ell_h(\triaMK)\times \Qo^{\ell-1}_h(\triaMK)$ consists of a Scott--Vogelius pair with homogeneous boundary data on the Alfeld split of $K$; it is well-known that this pair is inf-sup stable for $\ell\geq d$.
With this we define the local right-inverse of the divergence $\BE^{(3)}_K \colon \Qo_h^{k+d-2}(\triaMK)\to \Vo_h^{k+d-1}(\triaMK)$ through:
\begin{equation*}
  \BE_K^{(3)}(q_K) = 
  \argmin \{ \norm{\nabla \bw_K}_{2,K}^2 \mid \bw_K \in \Vo_h^{k+d-1}(\triaMK) \, \text{ with }\diver\bw_K =q_K\}.
\end{equation*}
Abusing the notation, we can consider $\BE^{(3)}_K(q_K)$ to be defined on $\Omega$ by extending by zero.
Thus, the divergence correction is defined for any $\bw_h\in V_h^k$ as:
\begin{equation*}
  \BE_h^{(3)}(\bw_h)
  \coloneqq 
\sum_{K\in \tria} \BE^{(3)}_K(\Divhke \bw_h - \diver (\BE_h^{(1)}\bw_h + \BE^{(2)}_h\bw_h))
\end{equation*}

	\end{itemize}

  We note that the smoothing operator from \cite{KZ.2020,KVZ.2021} includes for $k\geq 3$ an additional component $\BE_h^{(4)}$ that ensures that $\BE_h$ preserves $(k-2)$-moments in the interior of each element.
  One consequence of this is that the smoothing operator is only required to appear in the forcing term (see Rem.~\ref{rem:grad_tilde} below);
  since for degree $k\geq 1$, nonlinear functions of piecewise-polynomials are not necessarily piecewise-polynomial anymore,
  in this work this interior moment preservation property does not bring any advantages.


In \cite[Prop.~18]{VZ.2018.III} the following local
$L^2$-approximation property of $\BE_h=\BE_h^k$ is proved: There exists a constant
depending only on $k$ and $\omega_0$ such that for every
$\bv_h\in \Vhk$ and  $K \in \mathcal{T}_h$ we have
\begin{align}\label{eq:E_stability_L2}
  \|\bv_h - \BE_h  \bv_h \|_{2,K}
  \lesssim  \sum_{F\in \Gamma_h(K)}{\|h_F^{-1/2} \jump{\bv_h \otimes \bn}_F\|_{2,F}}\,,
\end{align}
where
$\Gamma_h(K)\coloneqq \{F\in \Gamma_h\mid K\cap
F\neq\emptyset\}$. Here we also used that
$\abs{\jump{\bv_h}}=\abs{\jump{\bv_h \otimes \bn}}$.
We now proceed to generalise the estimate \eqref{eq:E_stability_L2} to
the Orlicz setting.
See also \cite[Lem.~2.12]{DHKZ.2025} for a similar result with Crouzeix--Raviart elements.
	
\begin{prop}[Orlicz approximability and stability]\label{prop:n-function_E}
  Let $\psi\colon \mathbb{R}^{\ge 0}\to\mathbb{R}^{\ge 0} $ be an
  N-funct\-ion with $\psi\in \Delta_2$, $\psi^*\in \Delta_2$, and let
  $k\in \mathbb{N}$. Then, for every $\bv_h\in \Vhk$ and
  $K\in \mathcal{T}_h$, it holds that
  \begin{align}\label{eq:stab-orlicz}
    &\dashint_K{\psi(\vert \Ghke (\bv_h-\BE_h
    \bv_h)\vert)\,\textup{d}x}\lesssim  \sum_{F\in
    \Gamma_h(K)}\dashint_F{\psi(\vert h_F^{-1}\jump{\bv_h
    \otimes \bn}_F\vert)\, \textup{d}s}\,,
    \\
        &\dashint_K{\psi(\vert \nabla \BE_h
    \bv_h\vert)\,\textup{d}x}\lesssim  \dashint_K{\psi(\vert \nabla _h
    \bv_h\vert)\,\textup{d}x} +\sum_{F\in
    \Gamma_h(K)}\dashint_F{\psi(\vert h_F^{-1}\jump{\bv_h
    \otimes \bn}_F\vert)\, \textup{d}s}\,, \label{eq:stab-orlicz1}
  \end{align}
  where the constants depend only on $k$,
  $\Delta_2(\psi),\Delta_2(\psi^*)$ and $\omega_0$.
\end{prop}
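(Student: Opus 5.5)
The argument is local. Fix $K\in\mathcal{T}_h$ and set $\bw_h\coloneqq \bv_h-\BE_h\bv_h$. This is a piecewise polynomial of degree at most $k+d-1$ on the Alfeld refinements of the elements near $K$, and $\BE_h\bv_h\in\Vo$ is continuous. We will then pass from $L^\infty$ to Orlicz modulars by equivalence of norms on finite-dimensional spaces.

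\emph{Step 1: pointwise bound.} Since $\BE_h\bv_h\in\Vo$, it has no jumps, so $\jump{\bw_h\otimes\bn}=\jump{\bv_h\otimes\bn}$. By the definition of $\Rhke$ and the local $L^\infty$--$L^1$ inverse estimate for the Riesz representative, on $K$ we have $|\Rhke\bw_h|\lesssim \sum_{F\subset\partial K}\dashint_F |h_F^{-1}\jump{\bv_h\otimes\bn}_F|\,\textup{d}s$. For $\nabla_h\bw_h$ on $K$ we argue as follows. All three parts of $\BE_h$ are locally defined linear maps that vanish when $\bv_h$ is continuous across faces near $K$ with zero trace on $\partial\Omega$: $\BE^{(1)}_h$ then reproduces $\bv_h$; $\BE^{(2)}_h$ corrects $\leftavg\bv_h\rightavg-\BE^{(1)}_h\bv_h$, which vanishes; and $\BE^{(3)}_h$ corrects $\Divhke\bv_h-\diver\bv_h=0$. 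Hence, by the local $L^2$ bound \eqref{eq:E_stability_L2} and its gradient analogue (obtained with an inverse estimate on the finite-dimensional space), scaling to a reference patch gives
\begin{equation*}
\|\nabla_h \bw_h\|_{\infty,K}\lesssim \sum_{F\in\Gamma_h(K)} \|h_F^{-1}\jump{\bv_h\otimes\bn}_F\|_{\infty,F}.
\end{equation*}
There are finitely many patch configurations by shape regularity, so the constant is uniform.

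\emph{Step 2: passing to modulars.} From Step 1, $\sup_K|\Ghke\bw_h|\lesssim \sum_F \|h_F^{-1}\jump{\bv_h\otimes\bn}_F\|_{\infty,F}$. The jump is a polynomial on $F$ of fixed degree. Equivalence of $L^\infty(F)$ and the Luxemburg-type quantity for polynomials, in the form $\psi(\|g\|_{\infty,F})\lesssim \dashint_F\psi(|g|)\,\textup{d}s$ for polynomials $g$ of degree at most $k$, holds for $\psi\in\Delta_2$. This follows by scaling: for any $0<\lambda<1$, a fixed fraction of $F$ (uniform over the finite-dimensional space) satisfies $|g|\ge\lambda\|g\|_\infty$, and then one applies $\Delta_2$. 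Monotonicity and $\Delta_2$ of $\psi$, applied to a finite sum of terms with uniformly bounded count, give $\psi(\sup_K|\Ghke\bw_h|)\lesssim\sum_F\psi(\|\cdot\|_{\infty,F})\lesssim \sum_F \dashint_F \psi(|h_F^{-1}\jump{\bv_h\otimes\bn}_F|)\,\textup{d}s$. Averaging over $K$ yields \eqref{eq:stab-orlicz}.

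\emph{Step 3: stability.} Write $\nabla\BE_h\bv_h=\Ghke\BE_h\bv_h=\Ghke\bv_h-\Ghke\bw_h$ on $K$; the first equality holds because $\BE_h\bv_h\in\Vo$. Use $\Ghke\bv_h=\nabla_h\bv_h-\Rhke\bv_h$ together with the Step 1 bound for $\Rhke\bv_h$, which transfers to modulars exactly as in Step 2. By convexity and $\Delta_2$, $\psi(a+b+c)\lesssim\psi(a)+\psi(b)+\psi(c)$, and combining with \eqref{eq:stab-orlicz} gives \eqref{eq:stab-orlicz1}.

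The main obstacle is Step 1: the uniform local $L^\infty$ gradient bound for $\bv_h-\BE_h\bv_h$ in terms of jumps only. In particular, one must check that the divergence correction $\BE^{(3)}_K$, a minimum-norm right inverse on the Alfeld split, is a bounded linear map with scale-invariant constant, and that its input $\Divhke\bv_h-\diver(\BE^{(1)}_h\bv_h+\BE^{(2)}_h\bv_h)$ is controlled by jumps. I would first reduce this to a kernel argument on reference patches: the map $\bv_h\mapsto\nabla_h(\bv_h-\BE_h\bv_h)|_K$ vanishes on the kernel of the local jump map, so it is bounded by the jumps on a finite-dimensional space. Scale invariance comes from the affine equivalence of patches together with the compactness of shape-regular configurations.
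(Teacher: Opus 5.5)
Your proposal is correct and follows essentially the paper's route. The paper bounds $\|\nabla_h(\bv_h-\BE_h\bv_h)\|_{\infty,K}$ uniformly by the scaled jumps on $\Gamma_h(K)$ using \eqref{eq:E_stability_L2} and inverse estimates, passes to modulars with $\Delta_2$ and Jensen, bounds $\Rhke\bv_h$ by a local lifting estimate (using $\Rhke\BE_h\bv_h=\boldsymbol{0}$), and obtains \eqref{eq:stab-orlicz1} by the triangle inequality. Your kernel/compactness fallback is not needed, but for the scaling to come out right, \eqref{eq:E_stability_L2} must be used with $\nabla_h$ or a factor $h_K^{-1}$ on the left. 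The remaining differences are cosmetic: you bound by $L^\infty(F)$ norms of the jump with a superlevel-set argument instead of face averages plus Jensen, and you derive the lifting bound pointwise rather than quoting a modular estimate.
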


\begin{proof}
  First, we prove \eqref{eq:stab-orlicz} with $\Ghke$ replaced by the
  local gradient $\nabla_h$.
  Owing to \eqref{eq:E_stability_L2} together with \cite[Lem.\
  12.1]{EG21} and $h_F\sim h_K$, we have that
  \begin{align}
    \| \nabla_h (\bv_h-\BE_h  \bv_h)\|_{\infty,K}\lesssim \sum_{F\in
    \Gamma_h(K)}\dashint_F{\vert h_F^{-1}\jump{\bv_h
    \otimes \bn}_F\vert\,\textup{d}s}\,,\label{prop:n-function_E.1} 
  \end{align}
  with a constant depending only on $k$ and $\omega_0$.  Using
  Prop.~\ref{prop:n-function_E.1}, the $\Delta_2$-condition and convexity
  of $\psi$, in particular, Jensen's inequality, and that
  ${\sup_{h>0}\sup_{K\in
    \mathcal{T}_h}{\textup{card}(\Gamma_h(K))}\allowbreak\leq c}$,
  where $c>0$ depends only on $\omega_0$, we find that
  \begin{align}\label{prop:n-function_E.2}
    \dashint_K{\psi(\vert \nabla_h (\bv_h-\BE_h  \bv_h)\vert)\,\textup{d}x}
    &\lesssim \psi\bigg(\frac{1}{\textup{card}(\Gamma_h(K))} \sum_{F\in\Gamma_h(K)}\dashint_F{\vert
      h_F^{-1}\jump{\bv_h \otimes \bn}_F\vert \,\textup{d}s}\bigg )\notag
    \\
    &\lesssim
      \frac{1}{\textup{card}(\Gamma_h(K))}\sum_{F\in\Gamma_h(K)}\dashint_F{\psi(\vert
      h_F^{-1}\jump{\bv_h \otimes \bn}_F\vert)\,\textup{d}s} 
    \\
    &\lesssim \sum_{F\in\Gamma_h(K)}\dashint_F{\psi(\vert h_F^{-1}\jump{\bv_h \otimes
      \bn}_F\vert)\,\textup{d}s}\,,\notag 
  \end{align}
  where the constants depend only on $k$, $\Delta_2(\psi),\Delta_2(\psi^*)$ and $\omega_0$.
  This implies immediately \eqref{eq:stab-orlicz1} using $\nabla
  \BE_h  \bv_h=\nabla_h (\BE_h\bv_h- \bv_h) +\nabla _h \bv_h$ and the
  convexity of $\psi$.
    Appealing to \cite[(A.23)]{dkrt-ldg}, for every $F\in \Gamma_h$ and
  $K\in \mathcal{T}_h$ with $F\subseteq \partial K$, it holds that
  \begin{align}
    \dashint_K{\psi(\vert \boldsymbol{\mathcal{R}}_h^{k-1}
    \bv_h\vert)\,\textup{d}x}\lesssim \dashint_F{\psi(\vert h_F^{-1}\jump{\bv_h
    \otimes \bn}_F\vert)\,\textup{d}s}\,,\label{prop:n-function_E.3} 
  \end{align}
  where the constants depend only on $k$,
  $\Delta_2(\psi),\Delta_2(\psi^*)$ and $\omega_0$.  Thus, using
  \eqref{eq:DGnablaR}, Prop.~\ref{prop:n-function_E.2}, that
  ${\boldsymbol{\mathcal{R}}_h^{k-1} \BE_h \bv_h=\boldsymbol{0}}$, and
  Prop.~\ref{prop:n-function_E.3}, we obtain
  \begin{align*}
    \dashint_K{\psi(\vert \Ghke   (\bv_h-\BE_h  \bv_h)\vert)\,\textup{d}x}
    &\lesssim \dashint_K{\psi(\vert
      \nabla_h (\bv_h-\BE_h  \bv_h)\vert)\,\textup{d}x}
    +\dashint_K\!{\psi(\vert
      \boldsymbol{\mathcal{R}}_h^{k-1} \bv_h\vert)\,\textup{d}x}
    \\
    &\lesssim \sum_{F\in\Gamma_h(K)}\dashint_F{\psi(\vert h_F^{-1}
    \jump{\bv_h \otimes \bn}_F\vert)\,\textup{d}s}\,, 
  \end{align*}
  where the constants depend only on $k$, $\Delta_2(\psi),\Delta_2(\psi^*)$ and $\omega_0$.
\end{proof}

\section{LDG formulations}\label{sec:ldg}
To obtain a LDG formulation of \eqref{eq:p-stokes} we use ideas from
\cite{KVZ.2021,bgkr-optimal-dg,kr-pnse-ldg-1} to get 
the discrete counterpart~of~Problem~(Q):
    
    \textbf{Problem (Q$_h$).} For given $k \in \setN$ and $\bff \in \Vo^*$,
    find $(\bv_h,q_h)\!\in V_h^k\times \Qhkeo$ such that 
     for all $(\bz_h,z_h) \in \Vhk\times\Qhkeo$, it holds
    \begin{align}
      &\begin{aligned}
        &\bighskp{\SSS(\Dhke \bv_h )}{\BD \BE_h^k \bz_h}_\Omega
        +\alpha \bigskp{\SSS_{\bbeta_h(\bv_h)}(h_\Gamma^{-1}
          \jump{\bv_h \otimes \bn})}{ \jump{\bz_h \otimes
            \bn}}_{\Gamma_h}
        \\
        &\quad -\bighskp{q_h}{\Divhke \bz_h}_\Omega =  \langle \bff, \BE_h^k \bz_h\rangle _{\Vo} \,,
        \label{eq:primal0.1} 
      \end{aligned}
          \\
      \label{eq:primal0.2}&\bighskp{\Divhke \bv_h}{z_h}_\Omega=0\,,
    \end{align}
    where $\alpha\hspace*{-0.15em}>\hspace*{-0.15em}0$ is the
    \textit{stabilisation parameter},
    $\bbeta_h\colon
    \hspace*{-0.15em}V_h^k\hspace*{-0.15em}\to\hspace*{-0.15em}
    \mathbb{R}^{\ge 0}$ is the \textit{max-shift
      functional},~for~\mbox{every}~${\bv_h\hspace*{-0.15em}\in\hspace*{-0.15em}
      V_h^k}$, defined via
    \begin{align}\label{def_shift}
      \bbeta_h(\bv_h) \coloneqq  \begin{cases}
        0 & \text{if }p\leq 2\,,\\
        \|\Dhke \bv_h\|_{\infty,\Omega}& \text{if }p>2\,,
      \end{cases}
    \end{align}
    and $\SSS_{\bbeta_h(\bv_h)}$ is defined in \eqref{eq:flux}.  Next, we
    eliminate in the system~\eqref{eq:primal0.1},
    \eqref{eq:primal0.2}, the variable $q_h\in \Qhkeo$ to derive a
    system only expressed in terms of the discrete velocity
    $\bv_h\hspace{-0.1em}\in\hspace{-0.1em}
    \smash{\Vhk}$. To~this~end we introduce
    \begin{align*}
      V_h^k(0)&\coloneqq \big\{\bv_h\in V_h^k\mid \bighskp{\Divhke
                \bv_h}{z_h}_\Omega=0\textup{ for all }z_h\in
                \Qhke\big\}
      \\
                &\,= \big\{\bv_h\in V_h^k\mid \Divhke \bv_h=0 \big\}\,.
    \end{align*}
    Note that $V_h^k(0)\neq \emptyset$. In fact, in \cite{KVZ.2021} it
    is shown, using the inf-sup stability of the pair
    $(\Vhk, \Qhkeo)$, $k \in \setN$, (cf.~\cite{JLMNR17}), that the
    problem \eqref{eq:primal0.1}, \eqref{eq:primal0.2} has a solution in
    the case $p=2$. This solution belongs to $V_h^k(0)$. Using $V_h^k(0)$   we~obtain~the~discrete counterpart of Problem (P):
    
    \textbf{Problem (P$_h$).} For given $k \in \setN$ and 
    $\bff \in \Vo^*$, find $\bv_h\in V_h^k(0)$ such that for all
    $\bz_h\in     V_h^k(0)$, it holds
    \begin{align} \label{eq:primal}
        \begin{aligned}
          &\hspace*{-1.5mm}\bighskp{\SSS(\Dhke \bv_h )}{\BD \BE_h^k \bz_h}_\Omega
        \!+\!\alpha \bigskp{\SSS_{\bbeta_h(\bv_h)}(h_\Gamma^{-1}
          \jump{\bv_h \otimes \bn})}{ \jump{\bz_h \otimes
            \bn}}_{\Gamma_h}\!=  \langle \bff, \BE_h^k \bz_h\rangle _{\Vo} \,. \hspace*{-1.5mm}
        \end{aligned}
    \end{align}
    Problem (Q$_h$) and Problem (P$_h$) are called \textit {primal
      formulations} of the system~\eqref{eq:p-stokes}.
    
    \begin{rem}\label{rem:grad_tilde}
      {\rm
          (i) Note that since $\BE_h\bw_h \in \Vo$ we can interchangeably write $\Dhke\BE_h\bw_h=\BD_h\BE_h\bw_h=\BD\BE_h\bw_h$ and $\Ghke\BE_h\bw_h=\nabla_h\BE_h\bw_h=\nabla\BE_h\bw_h$ for all $\bw_h\in \Vhk$.

      (ii) Furthermore, from the moment preservation property \eqref{eq:facet_moments} and the definition of the DG gradient,
        one readily sees that the following holds for any $\BA_h \in X_h^{0,\mathrm{sym}}$ and $\bw_h\in V_h^1$:
        \begin{equation}
          (\BA_h, \BD \BE_h\bw_h )_\Omega 
          = (\BA_h, \nabla \BE_h\bw_h)_\Omega 
          = (\BA_h, \boldsymbol{\mathcal{G}}^0_h\bw_h)_\Omega
          = (\BA_h, \boldsymbol{\mathcal{D}}_h^0\bw_h)_\Omega,
        \end{equation}
        which means that in the discrete formulations \eqref{eq:primal0.1} and \eqref{eq:primal},
        assuming that $k=1$, we could swap the first term for 
        \begin{equation}
          (\BS(\boldsymbol{\mathcal{D}}^0_h \bv_h), \boldsymbol{\mathcal{D}}^0_h\bz_h)_\Omega.
        \end{equation}
        In other words, with piecewise-linear approximations,
        it is only necessary to implement the smoothing operator in the forcing term.
        The same observation clearly holds also for the modified scheme that will be introduced in Section \ref{sec:rob}.

(iii) The max-shift functional is constructed analogously to
        the one in \cite{bgkr-optimal-dg} and has analogous
        properties. 
      }
      \end{rem}

	
    More precisely, the max-shift functional is precisely constructed in such a way that the following lemma applies.
	
    \begin{lem}\label{lem:max-shift}
      For every $\kappa>0$, there exists a constant $c_\kappa>0$,
      depending on the characteristics of~$\SSS$, such that for every
      $\bv_h,\bz_h\in V_h^k$ and $\bv\in V$, it holds that
      \begin{align*}
        &\vert (\SSS(\Dhke \bv_h) - \SSS(\BD \bv), \Dhke(\BE_h  \bz_h - \bz_h))_{\Omega}\vert \\
        &\lesssim \kappa \,\|\BF(\Dhke \bv_h) - \BF(\BD \bv)\|^2_{2,\Omega
          }+
          c_\kappa\, m_{\varphi_{\bbeta_h(\bv_h)},h}(\bz_h)\,,
      \end{align*}
      where the constants depend only on $k$, $\omega_0$, and the characteristics of $\BS$.
    \end{lem}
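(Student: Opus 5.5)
We work element by element. We use the generalized Young inequality \eqref{ineq:young} with the shifted N-function $\varphi_{|\BD\bv|}$, then the change of shift, Lemma~\ref{lem:shift-change}, to move to the shift $\bbeta_h(\bv_h)$, and finally the Orlicz stability estimate \eqref{eq:stab-orlicz} of Prop.~\ref{prop:n-function_E}. Set $\BA\coloneqq \Dhke\bv_h$ and $\BB\coloneqq \BD\bv$. Since the test tensor is symmetric after the $\SSS$-difference is taken,
\[
|(\SSS(\BA)-\SSS(\BB),\Dhke(\BE_h\bz_h-\bz_h))_\Omega|\le \int_\Omega |\SSS(\BA)-\SSS(\BB)|\,|\Ghke(\BE_h\bz_h-\bz_h)|\,\mathrm dx .
\]

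\textbf{Case $p\le 2$, where $\bbeta_h=0$.} By Prop.~\ref{lem:hammer}, $|\SSS(\BA)-\SSS(\BB)|\sim \varphi'_{|\BB|}(|\BA-\BB|)$. Young's inequality \eqref{ineq:young} with $\psi=\varphi_{|\BB|}$, together with \eqref{eq:psi'}, bounds the integrand by
\[
\kappa\,(\varphi_{|\BB|})^*\big(\varphi'_{|\BB|}(|\BA-\BB|)\big)+c_\kappa\,\varphi_{|\BB|}(|\Ghke(\BE_h\bz_h-\bz_h)|).
\]
The first term is $\lesssim \kappa\,\varphi_{|\BB|}(|\BA-\BB|)\sim\kappa\,|\BF(\BA)-\BF(\BB)|^2$. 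For $p\le2$, Remark~\ref{rem:phi_a} gives $\varphi_a\le\varphi_0=\varphi$, which removes the shift in the second term. Applying \eqref{eq:stab-orlicz} with $\psi=\varphi$ on each $K$, multiplying by $|K|\sim h_F|F|$ and summing with bounded overlap yields $m_{\varphi,h}(\bz_h)$. This case is routine.

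\textbf{Case $p>2$, where $\bbeta\coloneqq\|\BA\|_{\infty,\Omega}$.} The Young step cannot use the shift $|\BB|$, since $\varphi_{|\BB|}$ may exceed $\varphi_{\bbeta}$ where $|\BB|>\bbeta$. We therefore first shift to $|\BA|$. Using $|\SSS(\BA)-\SSS(\BB)|\sim\varphi'_{|\BA|}(|\BA-\BB|)$ (the equivalence is symmetric in the shift up to constants), Young with $\psi=\varphi_{|\BA|}$ gives
\[
\kappa\,\varphi_{|\BA|}(|\BA-\BB|)+c_\kappa\,\varphi_{|\BA|}(|\Ghke(\BE_h\bz_h-\bz_h)|).
\]
The first term is again $\sim\kappa|\BF(\BA)-\BF(\BB)|^2$. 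Since $|\BA|\le\bbeta$ pointwise and $p>2$, Remark~\ref{rem:phi_a} gives $\varphi_{|\BA|}\le\varphi_{\bbeta}$. The shift $\bbeta$ is a constant, so $\varphi_\bbeta$ is a fixed N-function with $\Delta_2$-constants uniform in $\bbeta$. Applying \eqref{eq:stab-orlicz} with $\psi=\varphi_\bbeta$ and summing over elements as before gives $c_\kappa m_{\varphi_\bbeta,h}(\bz_h)$, so Lemma~\ref{lem:shift-change} is in fact not needed.

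\textbf{Main obstacle.} The delicate point is the $p>2$ case: \eqref{eq:stab-orlicz} requires an $x$-independent N-function, and the natural Young shift must be dominated pointwise by the constant shift. This is exactly why $\bbeta_h$ is defined as the $L^\infty$ norm of $\Dhke\bv_h$, not of $\BD\bv$, and why the Young inequality is taken with shift $|\Dhke\bv_h|$.
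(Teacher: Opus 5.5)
Your argument is correct and is essentially the paper's proof. It uses the same ingredients: Prop.~\ref{lem:hammer}, the $\varepsilon$-Young inequality, the monotonicity of $\varphi_a$ in the shift from Rem.~\ref{rem:phi_a}, and Prop.~\ref{prop:n-function_E}. The only difference is on which side the shift comparison is made. The paper applies Young directly with the constant shift $\varphi_{\bbeta_h(\bv_h)}$ and compares conjugates, $(\varphi_{\bbeta_h(\bv_h)})^*\le(\varphi_{|\Dhke\bv_h|})^*$, which handles $p\le 2$ and $p>2$ at once. You use the variable shift $|\Dhke\bv_h|$ and compare on the primal side, $\varphi_{|\Dhke\bv_h|}\le\varphi_{\bbeta_h(\bv_h)}$.

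Your case split is therefore unnecessary: the shift $|\Dhke\bv_h|$ also works for $p\le 2$, since then $\varphi_{|\Dhke\bv_h|}\le\varphi=\varphi_{\bbeta_h(\bv_h)}$. Also, as you note yourself, Lemma~\ref{lem:shift-change} plays no role, despite being announced in your first paragraph.
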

	
    \begin{proof}
      Using \eqref{eq:hammere} and the $\varepsilon$-Young inequality
      \eqref{ineq:young} with $\psi=\varphi_{\bbeta_h(\bv_h)}$, we find
      that
      \begin{align}
        \label{eq:max-shift.1}
          &\vert (\SSS(\Dhke \bv_h) - \SSS(\BD \bv),
          \Dhke(\BE_h \bz_h - \bz_h))_{\Omega}\vert\\
          &\le          c_\kappa\,\rho_{\varphi_{\bbeta_h(\bv_h)},\Omega}(\Dhke(\BE_h
          \bz_h - \bz_h)) +\kappa\,
          \rho_{(\varphi_{\bbeta_h(\bv_h)})^*,\Omega}(\varphi'_{\smash{|\Dhke
              \bv_h|}}(|\Dhke \bv_h - \BD \bv|))\,.\notag
      \end{align}
      Due to $\bbeta_h(\bu_h)\in \mathbb{R}$, Prop.~\ref{prop:n-function_E}
      is applicable and yields, also using $h_F\sim h_K$, and
      $\abs{\Dhke \bw_h}\le \abs{\Ghke \bw_h}$ for all $\bw_h \in
      W^{1,p}(\mathcal T_h)$, that
      \begin{align}
        \label{eq:max-shift.2}
        \rho_{\varphi_{\bbeta_h(\bv_h)},\Omega}(\Dhke(\BE_h  \bz_h - \bz_h)) \lesssim m_{\varphi_{\bbeta_h(\bu_h)},h}(\bz_h)\,.
      \end{align}
      By definition of the max-shift functional and Rem.~\ref{rem:phi_a}, we obtain
      $ (\varphi_{\bbeta_h(\bv_h)})^*\leq (\varphi_{\smash{\vert
          \Dhke \bv_h\vert}})^*$. Using this, the
      equivalence \eqref{eq:psi'} for
      $\psi=\varphi_{\smash{\vert \Dhke\bv_h\vert}}$, and
      \eqref{eq:hammera}, we obtain
      \begin{align}\label{eq:max-shift.3}
        \begin{aligned}
          &\rho_{(\varphi_{\bbeta_h(\bv_h)})^*,\Omega}(\varphi'_{\smash{|\Dhke
              \bv_h|}}(|\Dhke \bv_h - \BD \bv|))
          \\
          &\leq\rho_{(\varphi_{|\Dhke \bv_h|})^*,\Omega}(\varphi'_{\smash{|\Dhke
              \bv_h|}}(|\Dhke \bv_h - \BD
          \bv|))
          \\
          &\lesssim \|\BF(\Dhke\bv_h) - \BF(\BD
          \bv)\|^2_{2,\Omega}\,.
        \end{aligned}
      \end{align}
    Eventually, combining \eqref{eq:max-shift.2} and \eqref{eq:max-shift.3} with \eqref{eq:max-shift.1}, we conclude the claimed estimate.
    \end{proof}
	
    \begin{prop}[well-posedness and stability] \label{prop:stabPh} Assume that $\SSS$ satisfies
      Ass.~\ref{assum:extra_stress} for some \linebreak ${p\in (1,\infty)}$, $\delta\in
      [0,\infty)$. Then, for $\alpha>0$ sufficiently large, depending
      only  on $k$, $\omega_0$   and the characteristics of $\SSS$,
      the problem (P$_h$), i.e., \eqref{eq:primal}, \eqref{def_shift}, 
      admits a solution $\bv_h\in \Vhk(0)$ for every $\bff \in
      \Vo^*$. Moreover, there holds 
      \begin{align}\label{eq:apriPh}
        \|\bv_h\|_{\nabla ,p,h}^p \lesssim \|\bff\|_{-1,p'}^{p'}
        +\delta^p\,\vert \Omega\vert\,,
      \end{align}
      where the constant depends only on $k$, $\omega_0$, and the
      characteristics of $\BS$. 
    \end{prop}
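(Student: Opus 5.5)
We plan to argue via a Brouwer fixed point / coercivity argument on the finite-dimensional space $V_h^k(0)$. Define the operator $\mathcal{A}_h\colon V_h^k(0)\to (V_h^k(0))^*$ by the left-hand side of \eqref{eq:primal}. It is continuous, since $\SSS$, $\SSS_a$ and $\bbeta_h$ are continuous and all spaces are finite-dimensional. By the standard corollary of Brouwer's fixed point theorem, existence follows once we show coercivity in the form $\langle \mathcal{A}_h\bv_h,\bv_h\rangle - \langle\bff,\BE_h\bv_h\rangle>0$ on a sphere $\|\bv_h\|_{\nabla,p,h}=R$ with $R$ large. The a priori estimate \eqref{eq:apriPh} then comes from the same computation applied to a solution.

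The key step is to test with $\bz_h=\bv_h$ and split $\BD\BE_h\bv_h=\Dhke\bv_h+\Dhke(\BE_h\bv_h-\bv_h)$, using Rem.~\ref{rem:grad_tilde}(i). The first part gives $(\SSS(\Dhke\bv_h),\Dhke\bv_h)_\Omega\gtrsim \rho_{\varphi,\Omega}(\Dhke\bv_h)$ by \eqref{eq:hammera} with $\BB=\mathbf 0$. The penalty term gives $\alpha\, m_{\varphi_{\bbeta_h(\bv_h)},h}(\bv_h)$ up to constants, since $\SSS_a(\BA):\BA\sim\varphi_a(|\BA^{\textup{sym}}|)$ and $|(\bw\otimes\bn)^{\textup{sym}}|\sim|\bw\otimes\bn|$. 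The cross term is bounded by Lem.~\ref{lem:max-shift} with $\bv=\mathbf 0$, $\bz_h=\bv_h$:
\begin{align*}
|(\SSS(\Dhke\bv_h),\Dhke(\BE_h\bv_h-\bv_h))_\Omega|\lesssim\kappa\,\|\BF(\Dhke\bv_h)\|_{2,\Omega}^2+c_\kappa\, m_{\varphi_{\bbeta_h(\bv_h)},h}(\bv_h).
\end{align*}
We choose $\kappa$ small and then $\alpha\ge 2c\,c_\kappa$, so that both terms are absorbed. This yields
\begin{align*}
\langle\mathcal{A}_h\bv_h,\bv_h\rangle\gtrsim \rho_{\varphi,\Omega}(\Dhke\bv_h)+m_{\varphi_{\bbeta_h(\bv_h)},h}(\bv_h).
\end{align*}

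Next we compare this with $\|\bv_h\|^p_{\BD,p,h}$. Using $\varphi(t)\gtrsim t^p-\delta^p$, we get $\rho_{\varphi,\Omega}(\Dhke\bv_h)\gtrsim\|\Dhke\bv_h\|_p^p-c\,\delta^p|\Omega|$. For the jumps, $\varphi_a\ge\varphi$ when $p\ge2$ (Rem.~\ref{rem:phi_a}), and $\bbeta_h=0$ when $p<2$. Hence $m_{\varphi_{\bbeta_h},h}(\bv_h)\ge m_{\varphi,h}(\bv_h)\gtrsim\|h_\Gamma^{-1/p'}\jump{\bv_h\otimes\bn}\|_{p,\Gamma_h}^p-c\,\delta^p\int_{\Gamma_h}h_\Gamma$. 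The last integral is $\lesssim|\Omega|$ by shape regularity. Combining with \eqref{eq:equi2} and the discrete Korn inequality (Prop.~\ref{korn}) gives
\begin{align*}
\langle\mathcal{A}_h\bv_h,\bv_h\rangle\gtrsim\|\bv_h\|^p_{\nabla,p,h}-c\,\delta^p|\Omega|.
\end{align*}

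For the right-hand side we bound $|\langle\bff,\BE_h\bv_h\rangle|\le\|\bff\|_{-1,p'}\|\nabla\BE_h\bv_h\|_p$. By \eqref{eq:stab-orlicz1} with $\psi=t^p$, summed over $K$ (with $|K|/|F|\sim h_F$), we get $\|\nabla\BE_h\bv_h\|_p\lesssim\|\bv_h\|_{\nabla,p,h}$. Young's inequality then gives $\le\varepsilon\|\bv_h\|^p_{\nabla,p,h}+c_\varepsilon\|\bff\|^{p'}_{-1,p'}$. This yields the coercivity needed for Brouwer on a large sphere, hence existence. Applied to a solution, where $\langle\mathcal{A}_h\bv_h,\bv_h\rangle=\langle\bff,\BE_h\bv_h\rangle$, it yields \eqref{eq:apriPh}.

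The main obstacle is the absorption of the cross term. The shift $\bbeta_h(\bv_h)$ depends on $\bv_h$ and is a global, non-local quantity, so one must check that Lem.~\ref{lem:max-shift} applies with $\bv=\mathbf 0$ and that the choice of $\alpha$ is uniform in $\bv_h$ and $h$. This holds because its constants are independent of the shift.
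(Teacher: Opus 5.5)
Your proposal is correct and follows the paper's proof: you test with $\bv_h$, split $\BD\BE_h\bv_h=\Dhke\bv_h+\Dhke(\BE_h\bv_h-\bv_h)$, absorb the cross term into the jump penalty for large $\alpha$, pass from modulars to $p$-th powers up to $\delta^p|\Omega|$ via \eqref{eq:equi2} and Korn, and conclude existence by a finite-dimensional coercivity argument, with the a priori bound obtained by testing with the solution. The only deviations are minor. First, you use Brouwer's acute-angle lemma instead of pseudo-monotone surjectivity, so the boundedness estimate proved in the paper is not needed here. Second, you control the cross term by Lem.~\ref{lem:max-shift} with $\bv=\mathbf 0$, whereas the paper uses the unshifted Young inequality, Prop.~\ref{prop:n-function_E} with $\psi=\varphi$, and then $m_{\varphi,h}\le m_{\varphi_{\bbeta_h(\bv_h)},h}$.
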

    
    \begin{proof}
      We equip $V_h^k$ with the norm $\|\cdot\|_{\nabla, p,h}$,
      and consider the operator \linebreak 
      ${\BA_h\coloneqq \BA_h^k\colon V_h^k\to (V_h^k)^*}$, for
      every $\bv_h, \bz_h\in V_h^k $, defined via
      \begin{align*}
        \langle \BA_h\bv_h,\bz_h\rangle_{V_h^k}\coloneqq 
        (\SSS(\Dhke \bv_h),\BD \BE_h  \bz_h)_{\Omega}
        + \alpha\, \langle \SSS_{\bbeta_h(\bv_h)}( h_\Gamma^{-1} \jump{\bv_h \otimes\bn} ) , \jump{\bz_h \otimes\bn}\rangle_{\Gamma_h}\,.
      \end{align*}
      Since $V_h^k$ is finite dimensional, consists of broken
      polynomials, \cite[Lem.~3.18]{bdr-7-5}, the properties of $\SSS$
      and $\SSS_a$, $a\ge 0$, imply that the operator 
      $\BA_h\colon V_h^k\to (V_h^k)^*$, for every fixed $h>0$,
      is well-defined, continuous, and, thus, pseudo-monotone.
To prove the boundedness of $\BA_h\colon V_h^k\to (V_h^k)^*$, we use
H\"older's inequality to get
    \begin{align}\label{lem:ldg_stress.5a}
      \langle \BA_h\bv_h ,\bz_h \rangle_{\Vhk}
      & \lesssim \|\SSS(\Dhke\bv_h )\|_{p',\Omega}\|\BD \BE_h\bz_h \|_{p,\Omega}
      \\
      &\quad+\alpha  \,\big\|h_\Gamma^{1/p'}\SSS_{\bbeta ( \bv_h)}
      (h_\Gamma^{-1}\jump{\bv_h \otimes
      \bn})\big\|_{p',\Gamma_{h}}
      \big\|h_\Gamma^{-1/p'}\jump{\bz_h
      \otimes \bn}\big\|_{p,\Gamma_{h}}\notag 
      \\
      &=\vcentcolon I_1\,\|\BD \BE_h \bz_h \|_{p,\Omega}+\alpha\,I_2
        \,\|h_\Gamma^{-1/p'}\jump{\bz_h \!\otimes\!
        \bn}\|_{p,\Gamma_{h}}\,.\notag 
    \end{align}
    Using that
    $\vert\SSS(\BA)\vert \leq c\,(\delta^{p-1}+\vert\BA\vert^{p-1})$
    for all ${\BA\in \mathbb{R}^{d\times d}}$
    (cf.~Ass.~\ref{assum:extra_stress}, Rem.~\ref{rem:phi_a}),
    $\abs{\Dhke \bv_h}\le \abs{\Ghke \bv_h}$, and the norm
    equivalence \eqref{eq:eqiv0}, we get that
    \begin{align}
    \label{lem:ldg_stress.6a}
    \begin{aligned}
        I_1^{p'}&  \lesssim\|\bv_h \big\|_{\nabla,p,h}^p+\delta^p\,\vert \Omega\vert\,.
    \end{aligned}
    \end{align}
    Similarly, using that
    $\vert\SSS_a(\BA)\vert \leq
    c\,(\delta^{p-1}+a^{p-1}+\vert\BA\vert^{p-1})$ for all
    $\BA\in \mathbb{R}^{d\times d}$~and~${a\ge 0}$ (cf.~Ass.~\ref{assum:extra_stress}, Rem.~\ref{rem:phi_a}),
    $\int_{\Gamma_h} h_\Gamma \,\textup{d}s\sim |\Omega|$,
    $\abs{\Dhke \bv_h}\le \abs{\Ghke \bv_h}$,
    and the equivalence of norms on the finite dimensional space $\Vhk$
    implying that 
    $\|\Dhke \bv_h\|_{\infty,\Omega}\le \|\Dhke
    \bv_h\|_{\infty,\Omega} +\|h_\Gamma^{-1}\jump{\bv_h
      \otimes \bn}\|_{\infty,\Gamma_{h}}\le 
    c(\Vhk)\,\|\bv_h\|_{\nabla,p,h}$, 
we obtain
    \begin{align}
    \label{lem:ldg_stress.7a}
    \begin{aligned}
        I_2^{p'}&\lesssim \|h_\Gamma^{-1/p'}\jump{\bv_h \otimes \bn}\|_{p,\Gamma_{h}}^p
        + (\delta^p + \bbeta_h(\bv_h)^p)\int_{\Gamma_h} h_\Gamma \,\textup{d}s
        \\
        &\lesssim
        \begin{cases}
         \|\bv_h\|_{\nabla,p,h}^p+\delta^p\,\vert
        \Omega\vert & \text{if }p\leq 2\,,
        \\
        (1+ c(\Vhk)^p\,\abs{\Omega})\, \|\bv_h\|_{\nabla,p,h}^p +\delta^p\,\vert
        \Omega\vert & \text{if }p>2\,.
          \end{cases}
        \end{aligned}
    \end{align}
    Finally, using \eqref{lem:ldg_stress.6a}, \eqref{lem:ldg_stress.7a}
    and the stability estimate \eqref{eq:stab-orlicz1} for $\psi(t)=t^p$ in
    \eqref{lem:ldg_stress.5a}, we~conclude~that
    \begin{align}\label{lem:ldg_stress.7.1a}
      \big\|\BA_h\bv_h \big\|_{\smash{(\Vhk)^*}}^{p'}\lesssim
      \begin{cases}
         \|\bv_h\|_{\nabla,p,h}^p+\delta^p\,\vert
        \Omega\vert & \text{if }p\leq 2\,,
        \\
        (1+ c(\Vhk)^p\,\abs{\Omega})\, \|\bv_h\|_{\nabla,p,h}^p +\delta^p\,\vert
        \Omega\vert & \text{if }p>2\,.
      \end{cases}
    \end{align}
      It remains to prove the coercivity of the operator  $\BA_h\colon V_h^k\to
      (V_h^k)^*$.  For every ${\bv_h\in V_h^k} $, using
      first that $\BD \BE_h \bv_h=\Dhke \BE_h \bv_h$,
      then that $\SSS (\BA):\BA\sim \varphi(\vert \BA\vert )$ and
      ${\SSS_{\bbeta_h(\bv_h)} (\BA):\BA\sim
        \varphi_{\bbeta_h(\bv_h)}(\vert \BA\vert )}$
      for~all~${\BA\in \mathbb{R}^{d\times d}}$ (cf.\
    Prop.~\ref{lem:hammer} and Rem.~\ref{rem:sa}),
      $\varphi_{\bbeta_h(\bv_h)}\ge \varphi$ (cf.\ Rem.~\ref{rem:phi_a}), the $\varepsilon$-Young inequality \eqref{ineq:young}
      with $\psi=\varphi$, and Prop.~\ref{prop:n-function_E} with
      $\psi=\varphi$, we find that
      \begin{align*}
        \langle \BA_h\bv_h,\bv_h\rangle_{V_h^k}
        &=(\SSS(\Dhke\bv_h),\Dhke
          \bv_h)_{\Omega}+ \alpha\, \langle \SSS_{\bbeta_h(\bv_h)}(
          h_\Gamma^{-1} \jump{ \bv_h \otimes\bn} ) , \jump{ \bv_h
          \otimes\bn}\rangle_{\Gamma_h} 
        \\
        &\quad + (\SSS(\Dhke \bv_h),\Dhke (\BE_h \bv_h- \bv_h))_{\Omega}
        \\
        &	\ge (c-\varepsilon)\,
          \rho_{\varphi,\Omega}(\Dhke \bv_h)+\alpha \,c\,
          m_{\varphi_{\bbeta_h(\bv_h)},h}(\bv_h)
        \\
        &\quad -c_\varepsilon\,
          \rho_{\varphi,\Omega}(\Dhke (\BE_h \bv_h- \bv_h)) 
        \\
        &	\ge (c-\varepsilon)\,
          \rho_{\varphi,\Omega}(\Dhke \bv_h)+(\alpha
          \,c-c_\varepsilon)\, m_{\varphi,h}(\bv_h)\,. 
      \end{align*}
      Consequently, choosing first $\varepsilon>0$ sufficiently small
      and, subsequently, $\alpha>0$ sufficiently large, also using
      $\varphi(t)+t^p\sim t^p+ \delta^p$ for all $t\ge 0$ (cf.~\cite{bdr-7-5}),
      $\int_{\Gamma_h}h_\Gamma \, \textup{d}s\sim |\Omega|$,
      the norm equivalence \eqref{eq:equi2}, and Korn's inequality
      Prop.~\ref{korn}, we arrive, for  every $\bv_h\in V_h^k $, at
      \begin{align}\label{eq:apri}
        \begin{aligned}
          \langle \BA_h\bv_h,\bv_h\rangle_{V_h^k} &\gtrsim \|\Dhke
          \bv_h\|_{p,\Omega}^p+\alpha
          \|h_{\Gamma}^{\smash{-1/p'}}\jump{\bv_h\otimes
            \bn}\|_{p,\Gamma_h}^p-\delta^p\,\vert
          \Omega\vert\,(1+\alpha)\\&\gtrsim \|
          \bv_h\|_{\nabla ,p,h }^p-\delta^p\,\vert
          \Omega\vert \,.
        \end{aligned}
      \end{align}
      Putting everything together, we proved that the operator
      $\BA_h\colon V_h^k\to (V_h^k)^*$ is well-defined, bounded,
      pseudo-monotone, and coercive.  Since $V_h^k(0)$ is a
      non-trivial subspace of $\Vhk$ (cf.~\cite{KVZ.2021}) this
      implies that $\BA_h$ viewed as an operator
      $\BA_h\colon V_h^k(0)\to (V_h^k(0))^*$ is also well-defined,
      bounded, pseudo-monotone, and coercive and, thus, surjective
      (cf.~\cite[Thm.\ 27.A]{zei-IIB}). The apriori estimate
      \eqref{eq:apriPh} follows
      immediately from \eqref{eq:apri}, if we use $\bz_h =\bv_h $ in
      \eqref{eq:primal}, and the stability of $\BE_h$ in
      Prop.~\ref{prop:n-function_E} for $\psi (t)=t^p$.
    \end{proof}

    \begin{prop}[well-posedness and
      stability] \label{prop:stabqh} Assume that $\SSS$ satisfies
      Ass.~\ref{assum:extra_stress} for some \linebreak$p\in (1,\infty)$,
      $\delta\in [0,\infty)$. Then, for $\alpha>0$ sufficiently large,
      depending only on $k$, $\omega_0$ and the characteristics of
      $\SSS$, the problem (Q$_h$), i.e.,
      \eqref{eq:primal0.1}--\eqref{def_shift}, admits a solution
      $(\bv_h, q_h)\in \Vhk \times \Qhkeo$ for every $\bff \in
      \Vo^*$. Moreover, there holds 
      \begin{align}\label{eq:apriQh}
        \begin{aligned}
          \|\bv_h\|_{\nabla ,p,h}^p &\lesssim
          \|\bff\|_{-1,p'}^{p'} +\delta^p\,\vert \Omega\vert\,,
          \\
          \|q_h\|^{p'}_{p',\Omega} &\lesssim
          \begin{cases}
         \|\bff \|_{-1,p'}^{p'}+\delta^p\,\vert
        \Omega\vert & \text{if }p\leq 2\,,
        \\
        (1+ c(\Vhk)^p\,\abs{\Omega})\, \|\bff \|_{-1,p'}^{p'} +\delta^p\,\vert
        \Omega\vert & \text{if }p>2\,,
      \end{cases}
        \end{aligned}
      \end{align}
      where the constants depend only on $k$, $\omega_0$, and the
      characteristics of $\BS$. 
    \end{prop}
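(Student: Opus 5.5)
The plan is to obtain the velocity from Prop.~\ref{prop:stabPh} and then recover the pressure by a discrete de Rham argument based on the inf-sup stability of the pair $(\Vhk,\Qhkeo)$ (cf.~\cite{JLMNR17,KVZ.2021}).

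Let $\bv_h\in\Vhk(0)$ be the solution of (P$_h$) given by Prop.~\ref{prop:stabPh}; it satisfies \eqref{eq:apriPh} and, by definition of $\Vhk(0)$, also \eqref{eq:primal0.2}. Define the residual functional $\boldsymbol{\ell}_h\in(\Vhk)^*$ by
\begin{align*}
\langle\boldsymbol{\ell}_h,\bz_h\rangle\coloneqq\langle\bff,\BE_h\bz_h\rangle_{\Vo}-\langle\BA_h\bv_h,\bz_h\rangle_{\Vhk}\,,
\end{align*}
with $\BA_h$ the operator from the proof of Prop.~\ref{prop:stabPh}. This functional vanishes on $\Vhk(0)=\ker(\Divhke)$. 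The discrete inf-sup condition
\begin{align*}
\sup_{\bz_h\in\Vhk}\frac{(z_h,\Divhke\bz_h)_\Omega}{\|\bz_h\|_{\nabla,p,h}}\gtrsim\|z_h\|_{p',\Omega}\quad\text{for all } z_h\in\Qhkeo
\end{align*}
states that $\Divhke\colon\Vhk\to(\Qhkeo)^*$ has closed range with a bounded right inverse. Since everything is finite-dimensional, the closed range theorem then yields a unique $q_h\in\Qhkeo$ with $(q_h,\Divhke\bz_h)_\Omega=-\langle\boldsymbol{\ell}_h,\bz_h\rangle$ for all $\bz_h\in\Vhk$. This is exactly \eqref{eq:primal0.1}. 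Moreover,
\begin{align*}
\|q_h\|_{p',\Omega}\lesssim\|\boldsymbol{\ell}_h\|_{(\Vhk)^*}\,.
\end{align*}

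\textbf{Main obstacle: the $L^{p'}$ inf-sup bound with an $h$-independent constant.} The classical result is stated for $p=2$. I would first try to derive the $L^{p'}$ version from the continuous Bogovski\u{\i} operator: for $z_h$, take $\bz\in\Vo$ with $\divo\bz=z_h$ and $\|\nabla\bz\|_p\lesssim\|z_h\|_{p'}$. Then map $\bz$ into $\Vhk$ by a quasi-interpolant $\Pi_h$ (e.g.\ Scott--Zhang into the conforming subspace of degree $k$), and use
\begin{align*}
(\Divhke\Pi_h\bz,z_h)=(\divo\Pi_h\bz,z_h)\,.
\end{align*}
The divergence is not preserved exactly by $\Pi_h$, so I would correct it with local (element- or face-bubble) contributions that preserve the $(k-1)$-moments, as in the Scott--Vogelius/Taylor--Hood-type DG arguments. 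Stability in $W^{1,p}$ of these local corrections follows from scaling, as in Prop.~\ref{prop:n-function_E}. If this fails, I would cite the $p$-version of the inf-sup condition from the literature (e.g.~\cite{kr-pnse-ldg-1}).

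For the dual norm of $\boldsymbol{\ell}_h$ I would bound each term as in \eqref{lem:ldg_stress.5a}--\eqref{lem:ldg_stress.7.1a}:
\begin{align*}
\langle\bff,\BE_h\bz_h\rangle\le\|\bff\|_{-1,p'}\|\nabla\BE_h\bz_h\|_p\lesssim\|\bff\|_{-1,p'}\|\bz_h\|_{\nabla,p,h}\,,
\end{align*}
using \eqref{eq:stab-orlicz1} with $\psi(t)=t^p$. For the operator term, \eqref{lem:ldg_stress.7.1a} gives
\begin{align*}
\|\BA_h\bv_h\|_{(\Vhk)^*}^{p'}\lesssim\|\bv_h\|^p_{\nabla,p,h}+\delta^p|\Omega|\,,
\end{align*}
with the additional factor $(1+c(\Vhk)^p|\Omega|)$ when $p>2$. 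Inserting \eqref{eq:apriPh} gives the asserted bound on $\|q_h\|_{p',\Omega}^{p'}$ in \eqref{eq:apriQh}, with the case distinction inherited from the max-shift term. The velocity bound in \eqref{eq:apriQh} is simply \eqref{eq:apriPh}.
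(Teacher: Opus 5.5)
Your overall argument is the paper's, apart from one ingredient. You take $\bv_h$ from Prop.~\ref{prop:stabPh} and recover the pressure by a discrete de Rham argument; the paper phrases this as bijectivity of $\mathcal G_h^k\colon\Qhkeo\to(\Vhk(0))^\circ$ together with $\BA_h\bv_h-\BE_h^*\bff\in(\Vhk(0))^\circ$, which is your $-\boldsymbol{\ell}_h$. You then bound $\|q_h\|_{p',\Omega}$ by the dual norm of the residual, using the boundedness of $\BE_h^*$, \eqref{lem:ldg_stress.7.1a} and \eqref{eq:apriPh}, exactly as the paper does. The difference is how you obtain the $h$-uniform $L^{p'}$ inf-sup condition. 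The paper builds no operator: it takes the $p=2$ LBB condition of \cite[Sec.~4.4]{JLMNR17} and transfers it to $L^{p'}$ with \cite[Lem.~12.1]{EG21}. You instead want a Fortin argument from the continuous problem, and as sketched that argument fails at two points.

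\emph{First point: the Bogovski\u{\i} step.} Applied to $z_h\in L^{p'}_0(\Omega)$, Bogovski\u{\i} controls $\|\nabla\bz\|_{p'}$, not $\|\nabla\bz\|_p$. Moreover, with $\divo\bz=z_h$ the pairing is $\|z_h\|_2^2$, so the exponents do not match. Apply it instead to $g\coloneqq|z_h|^{p'-2}z_h-\frac{1}{|\Omega|}\int_\Omega|z_h|^{p'-2}z_h\,\textup{d}x\in L^p_0(\Omega)$. For this $g$ one has $(g,z_h)_\Omega=\|z_h\|_{p'}^{p'}$ and $\|g\|_p\le 2\,\|z_h\|_{p'}^{p'-1}$.

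\emph{Second point: the bubble correction.} Conforming face bubbles carrying $\mathcal P_{k-1}(F)$-moments have degree $k+d-1>k$; this is exactly why $\BE_h^{(2)}$ maps into $\mathcal P_{k+d-1}$. So these corrections are not in $\Vhk$. No conforming correction can work in general, because $(\Vhk\cap\Vo,\Qhkeo)$ is the Scott--Vogelius pair, which is not uniformly inf-sup stable. You have to exploit the discontinuity of $\Vhk$. For instance, drop Scott--Zhang and use the Brezzi--Douglas--Marini interpolant $\Pi_k^{\mathrm{BDM}}\bz\in\Vhk$. It has continuous normal traces and preserves normal facet moments of degree $\le k$ and interior moments against $\nabla\mathcal P_{k-1}(K)$. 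Hence the identity
\begin{align*}
  (\Divhke\bw_h,z_h)_\Omega=-(\bw_h,\nabla_h z_h)_\Omega+\langle\{\bw_h\cdot\bn\},\jump{z_h}\rangle_{\Gamma_h^{i}}\,,\qquad \bw_h\in\Vhk,\ z_h\in\Qhke\,,
\end{align*}
gives $(\Divhke\Pi_k^{\mathrm{BDM}}\bz,z_h)_\Omega=(\divo\bz,z_h)_\Omega$. Moreover, $\|\Pi_k^{\mathrm{BDM}}\bz\|_{\nabla,p,h}\lesssim\|\nabla\bz\|_p$ follows from local $W^{1,p}$-stability and a trace inequality applied to $\jump{(\Pi_k^{\mathrm{BDM}}\bz-\bz)\otimes\bn}$.

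With these two repairs, your route gives the $L^{p'}$ inf-sup directly for every $p$, with a visibly $h$-independent constant. This makes self-contained what the paper obtains by a short reduction to the $p=2$ result. The rest of your proof then goes through unchanged.
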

	
\begin{proof}
  In \cite[Sec.~4.4]{JLMNR17} it is shown that there exists a constant
  $\beta>0$ such that for all $q_h \in \Qhkeo$ there holds the LBB
  condition for $p=2$
  \begin{align*}
    \beta\|q_{h}\|_{2,\Omega}\leq \sup_{\bw_{h}\in
    \Vhk\setminus\{\mathbf{0}\}}{\frac{(\Divhke\bw_{h},q_h)_\Omega}{\|\bv_{h}\|_{\nabla,2,h}}}
    =\sup_{\bw_{h}\in
    \Vhk\setminus\{\mathbf{0}\}}{\frac{-(\bw_{h},\smash{\mathcal
    {G}_{h}^k} q_h)_\Omega}{\|\bv_{h}\|_{\nabla,2,h}}} \,,
  \end{align*}
  where we used \eqref{eq:div-dg.0a}. This and \cite[Lem.\ 12.1]{EG21}
  yield for all $q_h \in \Qhkeo$ that 
  \begin{align}\label{eq:lbb}
    \beta\|q_{h}\|_{p',\Omega}\leq \sup_{\bw_{h}\in
    \Vhk\setminus\{\mathbf{0}\}}{\frac{(\Divhke\bw_{h},q_h)_\Omega}{\|\bv_{h}\|_{\nabla,p,h}}}
    =\vcentcolon\big\|\smash{\mathcal {G}_{h}^k} q_{h}\big\|_{\smash{(\Vhk)^*}}\,.
  \end{align}
  A direct consequence of the LBB condition \eqref{eq:lbb} is the
  surjectivity and, thus, bijectivity of the injective DG gradient
  operator
  $\smash{\mathcal{G}_{h}^k} \colon \Qhkeo\to(\Vhk(0))^\circ\! $,
  defined via \eqref{eq:div-dg.0a}, where
  $$\smash{(\Vhk(0))^\circ \coloneqq \{\bff_{h}\in (\Vhk)^*\mid\langle
  \bff_{h},\bz_{h}\rangle_{\Vhk}= 0\textup{ for all }\bz_{h}\in
  \Vhk(0)\}},$$
  denotes the annihilator of $\Vhk(0)$. It follows from
  Prop.~\ref{prop:n-function_E} that the adjoint operator ${\BE_h^*\colon\!
  \Vo^* \!\to\! (\Vhk) ^*}$ is well-defined and bounded. Moreover, \eqref
  {eq:primal} and the definition of $\BA_h\colon \Vhk \to (\Vhk)^*$
  yield that for all $\bz_h \in V_h^k(0)$ there holds 
  \begin{align*}
    \langle \BA_h\bv_h- \BE_h^*\bff,\bz_h\rangle_{V_h^k}=0\,,
  \end{align*}
  i.e.,  $\BA_{h}\bv_{h}-\BE_h^*\bff \in (\Vhk(0))^\circ$. Thus,  we conclude the
  existence of $q_{h}\in \Qhkeo$ such that 
  \begin{align}
    \label{eq:well_posed}
    \smash{\mathcal {G}_{h}^k q_{h}=\BA_{h}\bv_{h}-\BE_h^*\bff\quad\text{ in }(\Vhk)^*}\,,
  \end{align}
  i.e., $\smash{(\bv_{h},q_{h})^\top\in \Vhk\times \Qhkeo}$ solves
  \eqref{eq:primal0.1}--\eqref{def_shift}.  Apart from that, appealing
  to \eqref{eq:lbb} and \eqref{eq:well_posed}, we have, also using
  the boundedness of $\BE_h^*$, \eqref {lem:ldg_stress.7.1a}, \eqref{eq:apriPh} and Young's inequality, that
  \begin{align*}
    \beta \|q_{h_n}\|_{p'}^{p'}
    &\leq \big\|\mathcal{G}^k_{h} q_{h}
      \big\|^{p'}_{\smash{(\Vhk)^*}} 
    =\big\|\BA_h \bv_{h}-\BE_h^*\bff \big\|^{p'}_{\smash{(\Vhk)^*}}
    \\
    &\lesssim
      \begin{cases}
         \|\bv_h\|_{\nabla,p,h}^p+\delta^p\,\vert
        \Omega\vert  + \|\bff \|_{-1,p'}^{p'}& \text{if }p\leq 2
        \\
        (1+ c(\Vhk)^p\,\abs{\Omega})\, \|\bv_h\|_{\nabla,p,h}^p +\delta^p\,\vert
        \Omega\vert  +\|\bff \|_{-1,p'}^{p'}& \text{if }p>2\
      \end{cases}
    \\
    &\lesssim
      \begin{cases}
         \|\bff \|_{-1,p'}^{p'}+\delta^p\,\vert
        \Omega\vert & \text{if }p\leq 2\,,
        \\
        (1+ c(\Vhk)^p\,\abs{\Omega})\, \|\bff \|_{-1,p'}^{p'} +\delta^p\,\vert
        \Omega\vert & \text{if }p>2\,.
      \end{cases}
  \end{align*}
which yields \eqref{eq:apriQh}$_2$.
\end{proof}

\begin{rem}\label{rem:modular}
{\rm	
The proof of the estimate \eqref{eq:apriQh}$_2$ is based on the norm version of the LBB condition. Proceeding as in \cite[Sec.~4]{bdr-phi-stokes} on could use a modular version of the LBB condition. Thus, it seems plausible that our treatment of the $p$-Stokes problem can be extended to the $\phi$-Stokes problem for balanced N-functions $\phi$ (cf.~\cite{br-multiple-approx}, \cite{DHKZ.2025}).
}
\end{rem}
From \eqref{eq:primal0.1} and \eqref{eq:q1}, it follows that the error
equation, for every $\bz_h\in \Vhk$, takes the form
\begin{align}\label{eq:error_equation_primal}
  \begin{aligned}
    &(\SSS(\Dhke \bv_h) - \SSS(\BD \bv),\BD \BE_h
    \bz_h)_{\Omega} + \alpha\, \langle
    \SSS_{\bbeta_h(\bv_h)}(h_\Gamma^{-1} \jump{\bv_h\otimes \bn}),
    \jump{\bz_h \otimes \bn}\rangle_{\Gamma_h}
    \\
    &\quad -\hskp{q_h-q}{\Divhke \bz_h}_\Omega = 0\,,
  \end{aligned}
\end{align}
where we 	also used that $\Divhke \bz_h =\divo \BE_h\bz_h$ for every
$\bz_h \in \Vhk$ (cf.~\eqref{eq:divo}).
	
\begin{theorem}\label{thm:error_primal}
  Let $(\bv,q)  \!\in\! \Vo \times \Qo$ be a solution of \eqref{eq:q1}
  and ${(\bv_h, q_h)\!\in\! \Vhk \times \Qhkeo}$ be a solution of
  \eqref{eq:primal0.1}--\eqref{def_shift}. If $\alpha >0$ is sufficiently large,
  we have that
  \begin{align}\label{eq:opt}
    \begin{aligned}
      &\|\BF(\Dhke \bv_h) - \BF(\BD \bv)\|^2_{2,\Omega}
      + m_{\varphi_{\bbeta_h(\bv_h)},h}(\bv_h) \\
      &\lesssim \inf_{\bw_h \in \Vhk(0)} \big(\|\BF(\Dhke \bw_h) -
      \BF(\BD \bv)\|^2_{2,\Omega} +
      m_{\varphi_{\bbeta_h(\bv_h)},h}(\bw_h)\big)\,,
    \end{aligned}
  \end{align}
  where the constant depends only on $k$, $\omega_0$, and the characteristics of $\SSS$.
\end{theorem}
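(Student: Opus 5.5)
Fix an arbitrary $\bw_h\in\Vhk(0)$ and test the error equation \eqref{eq:error_equation_primal} with $\bz_h\coloneqq\bv_h-\bw_h\in\Vhk(0)$. Since $\Divhke\bz_h=0$, the pressure term $\hskp{q_h-q}{\Divhke\bz_h}_\Omega$ vanishes; this is the source of pressure-robustness. We then split $\BD\BE_h\bz_h=\Dhke\bz_h+\Dhke(\BE_h\bz_h-\bz_h)$ using Rem.~\ref{rem:grad_tilde}(i). In addition, we write $\Dhke\bz_h=(\Dhke\bv_h-\BD\bv)-(\Dhke\bw_h-\BD\bv)$. This gives
\begin{align*}
&(\SSS(\Dhke\bv_h)-\SSS(\BD\bv),\Dhke\bv_h-\BD\bv)_\Omega+\alpha\langle\SSS_{\bbeta_h(\bv_h)}(h_\Gamma^{-1}\jump{\bv_h\otimes\bn}),\jump{\bv_h\otimes\bn}\rangle_{\Gamma_h}\\
&=(\SSS(\Dhke\bv_h)-\SSS(\BD\bv),\Dhke\bw_h-\BD\bv)_\Omega+\alpha\langle\SSS_{\bbeta_h(\bv_h)}(h_\Gamma^{-1}\jump{\bv_h\otimes\bn}),\jump{\bw_h\otimes\bn}\rangle_{\Gamma_h}\\
&\quad-(\SSS(\Dhke\bv_h)-\SSS(\BD\bv),\Dhke(\BE_h\bz_h-\bz_h))_\Omega\,.
\end{align*}

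By Prop.~\ref{lem:hammer} and Rem.~\ref{rem:sa}, the left-hand side is bounded below by
\[
c\,\|\BF(\Dhke\bv_h)-\BF(\BD\bv)\|_2^2+c\,\alpha\, m_{\varphi_{\bbeta_h(\bv_h)},h}(\bv_h)\,,
\]
where we use $\SSS_a(\BA):\BA\sim\varphi_a(|\BA|)$ and the fact that $\bbeta_h(\bv_h)$ is a constant shift. The right-hand side is estimated term by term.

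\emph{First term.} Apply \eqref{eq:hammere}, Young's inequality \eqref{ineq:young} with $\psi=\varphi_{|\Dhke\bv_h|}$, then \eqref{eq:psi'} and \eqref{eq:hammera}. This bounds it by $\kappa\|\BF(\Dhke\bv_h)-\BF(\BD\bv)\|_2^2+c_\kappa\|\BF(\Dhke\bw_h)-\BF(\BD\bv)\|_2^2$, using $\varphi_{|\BA|}(|\BA-\BB|)\sim|\BF(\BA)-\BF(\BB)|^2$.

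\emph{Second term.} Since $|\SSS_a(\BA)|\sim\varphi_a'(|\BA|)$, Young's inequality with $\psi=\varphi_{\bbeta_h(\bv_h)}$ together with $(\varphi_a)^*\circ\varphi_a'\sim\varphi_a$ bounds it facewise by $\alpha\kappa\, m_{\varphi_{\bbeta_h(\bv_h)},h}(\bv_h)+\alpha c_\kappa\, m_{\varphi_{\bbeta_h(\bv_h)},h}(\bw_h)$.

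\emph{Third term.} This is exactly Lem.~\ref{lem:max-shift} with $\bz_h=\bv_h-\bw_h$. It yields $\kappa\|\BF(\Dhke\bv_h)-\BF(\BD\bv)\|_2^2+c_\kappa\, m_{\varphi_{\bbeta_h(\bv_h)},h}(\bv_h-\bw_h)$. By convexity and $\Delta_2$ of $\varphi_{\bbeta_h(\bv_h)}$ (uniformly in the shift, Rem.~\ref{rem:phi_a}), we have $m_{\varphi_{\bbeta_h(\bv_h)},h}(\bv_h-\bw_h)\lesssim m_{\varphi_{\bbeta_h(\bv_h)},h}(\bv_h)+m_{\varphi_{\bbeta_h(\bv_h)},h}(\bw_h)$.

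The main obstacle is that this last contribution produces $c_\kappa\, m(\bv_h)$ with a constant that is \emph{not} small. It cannot be absorbed by choosing $\kappa$ small; it has to be absorbed by the stabilisation. So the order of choices matters. First fix $\kappa$ so that $2\kappa\le c/2$, which absorbs the two $\BF$-error terms. This fixes $c_\kappa$, which depends only on $k$, $\omega_0$ and the characteristics of $\SSS$. Next choose $\alpha$ with $c\alpha-\alpha\kappa\ge 2c_\kappa$; this works provided also $\kappa<c$. Then the jump term $m_{\varphi_{\bbeta_h(\bv_h)},h}(\bv_h)$ is absorbed on the left with a positive multiple of at least $c_\kappa$, independent of $\alpha$ beyond this threshold. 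The constant in front of $m(\bw_h)$ is then $\alpha c_\kappa+c_\kappa$; since $\alpha$ is now fixed, it is a harmless constant. Dividing by $\min\{c/2,c_\kappa\}$ and taking the infimum over $\bw_h\in\Vhk(0)$ gives \eqref{eq:opt}.

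A point to check is that the shift in the measure $m$ is $\bbeta_h(\bv_h)$ on both sides. This holds because Lem.~\ref{lem:max-shift} and the stabilisation term both use this same constant shift, which is precisely why the max-shift functional is used.
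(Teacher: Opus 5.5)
Your argument is essentially the paper's proof. It uses the same test function $\bz_h=\bv_h-\bw_h\in\Vhk(0)$ and the same splitting into the volume term, the stabilisation term, the smoothing correction handled by Lem.~\ref{lem:max-shift}, and the vanishing pressure term. The absorption is also the same: first small $\kappa$, then large $\alpha$.

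There is one small slip in the first term. Young's inequality with $\psi=\varphi_{|\Dhke\bv_h|}$ leaves you with $c_\kappa\,\varphi_{|\Dhke\bv_h|}(|\Dhke\bw_h-\BD\bv|)$. This is not of the form $\varphi_{|\BA|}(|\BA-\BB|)$, and it is not bounded by $|\BF(\Dhke\bw_h)-\BF(\BD\bv)|^2$ alone; for $p>2$ and large $|\Dhke\bv_h|$ it can be much bigger. There are two ways to fix it:
\begin{itemize}
\item Insert the shift change of Lem.~\ref{lem:shift-change}. This produces an extra $\varepsilon\,|\BF(\Dhke\bv_h)-\BF(\BD\bv)|^2$, which you can absorb.
\item Use the shift $|\BD\bv|$, as the paper does. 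By \eqref{eq:hammere} with the arguments interchanged, $|\SSS(\Dhke\bv_h)-\SSS(\BD\bv)|\sim\varphi'_{|\BD\bv|}(|\Dhke\bv_h-\BD\bv|)$. Both terms produced by Young's inequality are then directly of $\BF$-form by \eqref{eq:psi'} and \eqref{eq:hammera}.
\end{itemize}
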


\begin{proof}
  Adding and subtracting $\Dhke \bz_h\!\in\! \Xhks$ in
  \eqref{eq:error_equation_primal} for arbitrary ${\bz_h\in \Vhk}$,
  using that $\Dhke \BE_h \bz_h = \BD \BE_h \bz_h$, we get
  \begin{align}\label{eq:error_primal.1}
    \begin{aligned}
      0 &= (\SSS(\Dhke \bv_h) - \SSS(\BD \bv), \Dhke\bz_h)_{\Omega}
+ \alpha\, \langle \SSS_{\bbeta_h(\bv_h)}(h_\Gamma^{-1}
      \jump{\bv_h\otimes \bn}), \jump{\bz_h \otimes
        \bn}\rangle_{\Gamma_h}
      \\
      &\quad+ (\SSS(\Dhke \bv_h) - \SSS(\BD \bv), \Dhke(\BE_h \bz_h
      - \bz_h))_{\Omega}
-\hskp{q_h-q}{\Divhke \bz_h}_\Omega
      \\
      & \eqqcolon \mathfrak{I}_1 + \alpha\, \mathfrak{I}_2
      + \mathfrak{I}_3+\mathfrak{I}_4\,.
    \end{aligned}
  \end{align}
  Next, we choose $\bz_h = \bv_h - \bw_h\in \Vhk(0)$, where
  $\bw_h\in \Vhk(0)$ is arbitrary, and estimate $\mathfrak{I}_1$,
  $\mathfrak{I}_2$, $\mathfrak{I}_3$, $\mathfrak{I}_4$:
		
  \textit{ad $\mathfrak{I}_1$.} Using \eqref{eq:hammera}, and
  \eqref{eq:hammere}, the $\varepsilon$-Young inequality
  \eqref{ineq:young} with $\psi=\varphi_{\vert {\BD} \bv\vert}$, we
  find that
  \begin{align}
    \label{eq:error_primal.2}
      \mathfrak{I}_1
      &=(\SSS(\Dhke \bv_h) - \SSS(\BD \bv), \Dhke \bv_h - \BD \bv +
        \BD \bv -\Dhke \bw_h)_{\Omega}
    \\
      &\geq c\, \|\BF(\Dhke\bv_h) - \BF(\BD \bv)\|^2_{2,\Omega} -
      \vert (\SSS(\Dhke\bv_h) - \SSS(\BD \bv),\BD \bv -\Dhke
        \bw_h)_{\Omega}      \vert \notag
    \\
      &\geq (c-\varepsilon)\, \|\BF(\Dhke \bv_h) - \BF(\BD
      \bv)\|^2_{2,\Omega} - c_\varepsilon \, \|\BF(\BD \bv) -
      \BF(\Dhke \bw_h)\|^2_{2,\Omega}\,.\notag 
  \end{align}
		
  \textit{ad $\mathfrak{I}_2$.} Using that
  $\SSS_{\bbeta_h(\bv_h)}(\BA): \BA \sim
  \varphi_{\bbeta_h(\bv_h)}(\BA)$ and $\abs{\SSS_{\bbeta_h(\bv_h)}(\BA)}\sim 
  \varphi_{\bbeta_h(\bv_h)}'(\BA)$ holds uniformly in
  $\BA\in \mathbb{R}^{n\times d}$ (cf.~Prop.~\ref{lem:hammer}
  and Rem.~\ref{rem:sa}) and the $\varepsilon$-Young inequality
  \eqref{ineq:young} with $\psi=\varphi_{\bbeta_h(\bv_h)}$,
  we obtain
  \begin{align}
    \label{eq:error_primal.3}
    \begin{aligned}
      \mathfrak{I}_2 &=
      \langle \SSS_{\bbeta_h(\bv_h)}(h_\Gamma^{-1}
      \jump{\bv_h\otimes \bn}),\jump{\bv_h\otimes \bn} -
      \jump{\bw_h\otimes \bn}\rangle_{\Gamma_h}
      \\ 
      &\geq c\, m_{\varphi_{\bbeta_h(\bv_h)},h} (\bv_h) -
       \abs{\langle h_\Gamma\,
      \varphi_{\bbeta_h(\bv_h)}'(h_\Gamma^{-1} \jump{\bv_h\otimes
        \bn}),h_\Gamma^{-1} \jump{\bw_h\otimes \bn}\rangle_{\Gamma_h}}
      \\ 
      &\geq (c- \varepsilon )\,
      m_{\varphi_{\bbeta_h(\bv_h)},h} (\bv_h) - 
      c_\varepsilon\, m_{\varphi_{\bbeta_h(\bv_h)},h} (\bw_h)\,.
    \end{aligned}
  \end{align}
		
  \textit{ad $\mathfrak{I}_3$.} Using Lem.~\ref{lem:max-shift}, we
  find that
  \begin{align}
    \label{eq:error_primal.5}
    \begin{aligned}
      \mathfrak{I}_3 &\leq \kappa\, \|\BF(\Dhke\bv_h) - \BF(\BD
      \bv)\|^2_{2,\Omega} +
      c_\kappa\, m_{\varphi_{\bbeta_h(\bv_h)},h}(\bz_h)\\
      &\lesssim \kappa \,\|\BF(\Dhke \bv_h) - \BF(\BD
      \bv)\|^2_{2,\Omega} + c_\kappa\,
      (m_{\varphi_{\bbeta_h(\bv_h)},h}(\bv_h) +
      m_{\varphi_{\bbeta_h(\bv_h)},h}(\bw_h))\,.
    \end{aligned}
  \end{align}

  \textit{ad $\mathfrak{I}_4$.} From \eqref{eq:divo} follows
  \begin{align}
    \label{eq:error_primal.5a}
    \begin{aligned}
      \mathfrak{I}_4 &=0\,.
    \end{aligned}
  \end{align}
  Combining \eqref{eq:error_primal.2}, \eqref{eq:error_primal.3},
  \eqref{eq:error_primal.5}, and \eqref{eq:error_primal.5a}, for
  $\varepsilon,\kappa>0$ sufficiently small and $\alpha>0$
  sufficiently large, we arrive at the claimed best-approximation
  result.
\end{proof}

Now we would like to derive from this quasi-optimal, pressure-robust
error estimate in Thm.~\ref{thm:error_primal} the convergence of the
velocities $\bv_h$ of the problem (Q$_h$) and (P$_h$), resp., to the solution $\bv$
of the problem (P) and (Q), resp., and convergence rates under
additional assumptions on the regularity of $\bv$. In
\cite{bgkr-optimal-dg}, where the $p$-Laplacian problem was treated,
we achieved this with the help of the stability and approximation
properties of the Scott--Zhang quasi-interpolation operator
${\Pi^{\textup{SZ}}\colon V\to \Vhk \cap V}$
(cf.~\cite{zhang-scott}). This has the advantage that the jump term in
the analogue of Thm.~\ref{thm:error_primal} \mbox{\cite[Thm.~4.6]{bgkr-optimal-dg}} vanishes.  Due to the divergence constraint
this strategy is not applicable here. There exist interpolation
operators into conforming Finite Elements, preserving the divergence,
however, only in a $(\Qhk)^*$-sense. Thus, we will work with the
(local) $L^2$--projection into $V_h^k(0)$ instead. This projection has
similar stability and approximation properties (cf.~Sec.~\ref{sec:aux}) as the Scott--Zhang
quasi-interpolation operator and maps into $\Vhk(0)$.

We denote by ${\PiDG:L^1(\Omega)^d\to V_h^k(0)}$, the \textit{(local)
$L^2$-projection} into $V_h^k(0)$, $k \in \setN_0$, which for every
$\bv \in \smash{L^1(\Omega)^d}$ and $\bz_h \in V_h^k(0)$
is~defined~via
\begin{align}
  \label{eq:PiDG}
  \bighskp{\PiDG \bv}{\bz_h}_\Omega=\hskp{\bv}{\bz_h}_\Omega\,.
\end{align}

\begin{cor}\label{cor:conv}
  For $p \le 2$ and $\alpha >0$ sufficiently large, it holds that
  \begin{align*}
    \|\BF(\Dhke \bv_h) - \BF(\BD \bv)\|^2_{2,\Omega}
    + m_{\varphi,h}(\bv_h)\to 0\quad (h\to 0)\,.
  \end{align*}
\end{cor}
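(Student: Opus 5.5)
For $p\le 2$ the max-shift functional vanishes, $\bbeta_h(\bv_h)=0$, so $m_{\varphi_{\bbeta_h(\bv_h)},h}=m_{\varphi,h}$. Thm.~\ref{thm:error_primal} then gives
\begin{align*}
  \|\BF(\Dhke \bv_h) - \BF(\BD \bv)\|^2_{2,\Omega}
  + m_{\varphi,h}(\bv_h)\lesssim
  \|\BF(\Dhke \bw_h) - \BF(\BD \bv)\|^2_{2,\Omega}
  + m_{\varphi,h}(\bw_h)
\end{align*}
for every $\bw_h\in\Vhk(0)$. I would choose $\bw_h=\PiDG\bv$, which lies in $\Vhk(0)$ by construction. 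It then suffices to show that the right-hand side tends to zero as $h\to 0$.

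\emph{Step 1: remove the shift.} By Prop.~\ref{lem:hammer}, $|\BF(\BA)-\BF(\BB)|^2\sim\varphi_{|\BB^{\textup{sym}}|}(|\BA^{\textup{sym}}-\BB^{\textup{sym}}|)$. For $p\le2$, Rem.~\ref{rem:phi_a} gives $\varphi_a\le\varphi_0=\varphi$ for every $a\ge0$. Using also $|\Dhke\bw|\le|\Ghke\bw|$, the right-hand side is bounded by
\begin{align*}
  \rho_{\varphi,\Omega}\big(\Ghke \PiDG\bv-\nabla\bv\big)+m_{\varphi,h}(\PiDG\bv)\,.
\end{align*}
This is the point where $p\le2$ is used: the error is now measured in the fixed, unshifted N-function $\varphi$, and the quantity no longer depends on $\bv_h$.

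\emph{Step 2: density argument.} Since $\Omega$ is a bounded Lipschitz domain, solenoidal fields in $C_c^\infty(\Omega)^d$ are dense in $\Vo_\divo$. I fix $\varepsilon>0$ and pick such a $\bv_\varepsilon$ with $\rho_{\varphi,\Omega}(\nabla(\bv-\bv_\varepsilon))<\varepsilon$. Because $\PiDG$ is linear, $\varphi\in\Delta_2$, and $\jump{\bv\otimes\bn}=\jump{\bv_\varepsilon\otimes\bn}=\mathbf 0$, the quantity above is bounded up to a constant by
\begin{align*}
  &\rho_{\varphi,\Omega}\big(\Ghke \PiDG(\bv-\bv_\varepsilon)\big)+m_{\varphi,h}\big(\PiDG(\bv-\bv_\varepsilon)\big)+\rho_{\varphi,\Omega}\big(\nabla(\bv-\bv_\varepsilon)\big)\\
  &\quad+\rho_{\varphi,\Omega}\big(\Ghke \PiDG\bv_\varepsilon-\nabla\bv_\varepsilon\big)+m_{\varphi,h}\big(\PiDG\bv_\varepsilon-\bv_\varepsilon\big)\,.
\end{align*}
I would invoke the modular stability of $\PiDG$ from Sec.~\ref{sec:aux}, namely
\begin{align*}
  \rho_{\varphi,\Omega}(\Ghke\PiDG\bw)+m_{\varphi,h}(\PiDG\bw)\lesssim\rho_{\varphi,\Omega}(\nabla\bw)\qquad\text{for }\bw\in\Vo\,,
\end{align*}
which is the analogue of the Scott--Zhang stability. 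With it, the first three terms are $\lesssim\varepsilon$ uniformly in $h$.

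\emph{Step 3: smooth part.} For the smooth field $\bv_\varepsilon$, the approximation properties of $\PiDG$ from Sec.~\ref{sec:aux} give, elementwise, $|\Ghke\PiDG\bv_\varepsilon-\nabla\bv_\varepsilon|\lesssim h\,\|\bv_\varepsilon\|_{2,\infty,\Omega}$ and $|h_\Gamma^{-1}\jump{\PiDG\bv_\varepsilon\otimes\bn}|\lesssim h\,\|\bv_\varepsilon\|_{2,\infty,\Omega}$. Integrating gives the bound $c\,\varphi(c_\varepsilon h)\,|\Omega|\to0$ as $h\to0$. Hence the $\limsup$ of the right-hand side is $\lesssim\varepsilon$, and letting $\varepsilon\to0$ proves the corollary.

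The main obstacle is the modular stability and approximation of $\PiDG$. It is a global $L^2$-projection onto a space defined by a divergence constraint, so locality is not automatic, and a norm bound does not directly yield the modular bound. I would first try to establish it using the inf-sup stability of $(\Vhk,\Qhkeo)$ together with inverse estimates (\cite[Lem.~12.1]{EG21}), in the spirit of the section on auxiliary results. If only a norm-type stability is available, I would use $\varphi(t)\lesssim t^p+\delta^{p-1}t$ for $p\le 2$ to carry the modular estimate through via norm estimates.
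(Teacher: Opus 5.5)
Your proposal is essentially the paper's proof: take $\bw_h=\PiDG\bv$ in \eqref{eq:opt}, where $\bbeta_h(\bv_h)=0$ for $p\le 2$, remove the shift using $p\le 2$, and conclude from the convergence properties of $\PiDG$ in Lem.~\ref{lem:conv}, whose proof is the density argument you write out in Steps~2--3. The differences are cosmetic: the paper bounds the volume term by $\|\BD\bv-\Dhke\PiDG\bv\|_{p,\Omega}^p$ and treats the $\nabla_h$ and $\Rhke$ parts separately via \eqref{eq:grad} and \eqref{eq:R} with $\psi(t)=t^p$, whereas you keep $\rho_{\varphi,\Omega}$ and $\Ghke$; and the stability and approximation of $\PiDG$ that you flag as the main obstacle are, in the paper too, simply taken from App.~\ref{sec:aux}, where they are stated for solenoidal fields, which covers your $\bv-\bv_\varepsilon$.
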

\begin{proof}
  We choose $\bw_h = \PiDG \bv$ in \eqref{eq:opt}.
  From \eqref{eq:hammera}, Rem.~\ref{rem:phi_a}, $p\le 2$, that
  ${\BD \bv = \Dhke \bv}$ since $\Rhke \bv =\bzero$, ${\abs{\BD_h
    \bw} \le \abs{\nabla_h \bw}}$, $\bw \in W^{1,p}(\mathcal T_h)$, follows 
  that
  \begin{align*}
    \|\BF(\BD \bv) - \BF(\Dhke\PiDG \bv)\|^2_{2,\Omega} &\lesssim
    \|\BD \bv - \Dhke\PiDG \bv\|_{p,\Omega}^{p}
    \\
    &\lesssim \|\nabla_h ( \bv - \PiDG \bv)\|_{p,\Omega}^{p} +
    \|\Rhke(\bv -\PiDG \bv)\|_{p,\Omega}^{p}
  \end{align*}
  with a constant depending only on the characteristics of
  $\SSS$. Since $\bv \in \Vo_\divo$ we get from the approximation
  properties of $\PiDG$ 
  \eqref{eq:grad}, \eqref{eq:R} with $\psi(t)=t^p$ that
  \begin{align}\label{cor:primal_convergence.2}
    \|\BF(\BD \bv) - \BF(\Dhke\PiDG \bv)\|^2_{2,\Omega}  \to 0\quad (h\to 0)\,.
  \end{align}
  Moreover, \eqref{eq:mod} for $\psi =\phi$, and $\jump{ \bv\otimes \bn} =\bzero$ yield
  \begin{align*}
    m_{\varphi,h}(\PiDG \bv) =m_{\varphi,h}(\bv-\PiDG \bv)\to 0\,,
  \end{align*}
  which together with \eqref{cor:primal_convergence.2} is the claimed
  convergence under minimal regularity assumptions. 
\end{proof}

\begin{cor}\label{cor:rate}
  Assume that $p \le 2$, $\delta>0$ and that $\BF(\BD \bv) \in
  W^{1,2}(\Omega)^{d\times d}$. Then, for $\alpha >0$ sufficiently large, it holds that
  \begin{align*}
    \|\BF(\Dhke \bv_h) - \BF(\BD \bv)\|^2_{2,\Omega}
    + m_{\varphi,h}(\bv_h)\lesssim h^p
  \end{align*}
  with a constant  depending only on $\|\BF(\BD\bv)\|_{1,2,\Omega}$, $k$, $\omega_0$, the
  characteristics of $\SSS$, and $\delta^p\,\abs{\Omega}$.
\end{cor}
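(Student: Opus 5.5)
Since $p\le 2$, the max-shift functional satisfies $\bbeta_h(\bv_h)=0$, so Thm.~\ref{thm:error_primal} reads with $m_{\varphi,h}$. As in Cor.~\ref{cor:conv}, I choose $\bw_h=\PiDG\bv\in\Vhk(0)$ in \eqref{eq:opt}. It then suffices to show
\begin{align*}
  \|\BF(\BD\bv)-\BF(\Dhke\PiDG\bv)\|_{2,\Omega}^2+m_{\varphi,h}(\bv-\PiDG\bv)\lesssim h^p ,
\end{align*}
where I use $\jump{\bv\otimes\bn}=\bzero$ to write $m_{\varphi,h}(\PiDG\bv)=m_{\varphi,h}(\bv-\PiDG\bv)$.

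\emph{Regularity input.} For $\delta>0$ and $p\le 2$, the standard argument (cf.~\cite{bdr-phi-stokes,kr-pnse-ldg-1}) yields $\BF(\BD\bv)\in W^{1,2}$ together with $\bv\in W^{2,p}(\Omega)^d$. Indeed, $|\nabla\BD\bv|^p\lesssim|\nabla\BF(\BD\bv)|^p(\delta+|\BD\bv|)^{(2-p)p/2}$, and Hölder's inequality together with \eqref{eq:apri-cont} give
\begin{align*}
  \|\nabla^2\bv\|_{p,\Omega}^p\lesssim\|\nabla\BF(\BD\bv)\|_{2,\Omega}^p\,\big(\|\BD\bv\|_{p,\Omega}^p+\delta^p|\Omega|\big)^{(2-p)/2}.
\end{align*}
The dependence on $\delta^p|\Omega|$ in the constant enters here. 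Alternatively, I would bound $\|\BD\bv\|_p^p\lesssim\rho_\varphi(\BD\bv)+\delta^p|\Omega|\lesssim\|\BF(\BD\bv)\|_2^2+\delta^p|\Omega|$.

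\emph{First term.} As in Cor.~\ref{cor:conv}, \eqref{eq:hammera} and $p\le2$ give
\begin{align*}
  |\BF(\BA)-\BF(\BB)|^2\sim\varphi_{|\BA|}(|\BA-\BB|)\lesssim|\BA-\BB|^p ,
\end{align*}
since $(\delta+a+t)^{p-2}t^2\le t^p$. Hence the first term is bounded by
\begin{align*}
  \|\nabla_h(\bv-\PiDG\bv)\|_p^p+\|\Rhke(\bv-\PiDG\bv)\|_p^p .
\end{align*}
By \eqref{prop:n-function_E.3} with $\psi(t)=t^p$, the second contribution is controlled by the jump modular $\|h_\Gamma^{-1/p'}\jump{(\bv-\PiDG\bv)\otimes\bn}\|_{p,\Gamma_h}^p$. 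The approximation properties of $\PiDG$ from Sec.~\ref{sec:aux} (\eqref{eq:grad}, \eqref{eq:R}, \eqref{eq:mod} with $\psi(t)=t^p$) should give local estimates in terms of $\nabla\bv$ on patches. For $W^{2,p}$ functions I would upgrade these to $h^p\|\nabla^2\bv\|_{p}^p$ in the usual way: subtract a local linear interpolant, using that $\PiDG$ reproduces divergence-free linears locally, or via Bramble--Hilbert on patches. I would use these approximation estimates in their higher-order form; this is the step I expect to need care, because $\PiDG$ is an $L^2$-projection onto a globally constrained space, and its locality and stability in $W^{1,p}$ must come from the auxiliary section.

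\emph{Jump term.} For $p\le2$ we have $\varphi(t)\lesssim t^p$, so
\begin{align*}
  m_{\varphi,h}(\bv-\PiDG\bv)\lesssim\|h_\Gamma^{-1/p'}\jump{(\bv-\PiDG\bv)\otimes\bn}\|_{p,\Gamma_h}^p .
\end{align*}
By the trace inequality, this is bounded by
\begin{align*}
  \sum_K\big(h_K^{-p}\|\bv-\PiDG\bv\|_{p,K}^p+\|\nabla(\bv-\PiDG\bv)\|_{p,K}^p\big)\lesssim h^p\|\nabla^2\bv\|_p^p .
\end{align*}

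Combining the two estimates yields the rate $h^p$. The exponent is $p$ rather than $2$ precisely because of the crude bound $\varphi_a(t)\lesssim t^p$; this loss is exactly the suboptimality noted in the introduction.
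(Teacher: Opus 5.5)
Your proof is correct. It matches the paper's proof in three places:
\begin{itemize}
\item the choice $\bw_h=\PiDG\bv$ in \eqref{eq:opt};
\item the regularity step, where your H\"older computation is essentially \cite[Lem.~4.4]{bdr-7-5}, which the paper quotes to get $\|\nabla^2\bv\|_{p,\Omega}^p\lesssim\|\nabla\BF(\BD\bv)\|_{2,\Omega}^2+\|\BF(\BD\bv)\|_{2,\Omega}^2+\delta^p|\Omega|$;
\item the jump term, which the paper also bounds by $\rho_{\phi,\Omega}(\nabla_h(\bv-\PiDG\bv))\lesssim\rho_{\phi,\Omega}(h\nabla^2\bv)\le h^p\|\nabla^2\bv\|_{p,\Omega}^p$, using \eqref{eq:PiDGapproxmglobal} instead of your explicit trace inequality.
\end{itemize}

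The genuine difference is the volume term. The paper does not use the crude bound $|\BF(\BA)-\BF(\BB)|^2\lesssim|\BA-\BB|^p$ there. It invokes Cor.~\ref{cor:app_VG}, i.e.\ the natural-distance estimates of Prop.~\ref{prop:app_V} and Prop.~\ref{prop:app_VR}, which give $\|\BF(\BD\bv)-\BF(\Dhke\PiDG\bv)\|_{2,\Omega}^2\lesssim h^2\|\nabla\BF(\BD\bv)\|_{2,\Omega}^2$, even for $\delta=0$.

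Your route is more elementary. It needs only the $W^{2,p}$-approximation of $\PiDG$ and the lifting bound \eqref{prop:n-function_E.3}, and no shifted N-functions. That lifting bound is valid for any broken Sobolev function, not only for elements of $\Vhk$, since $\Rhke\bw|_K$ only sees jumps on faces of $K$. Since $h\le 1$ and $p\le 2$, your route gives the same final rate $h^p$. What it gives up is the information that the volume best-approximation error is of optimal order $h^2$. The paper uses this in the remark after the corollary to attribute the loss of rate solely to the jump term. In your version both terms are only $O(h^p)$, and both go through the $W^{2,p}$-regularity of $\bv$.

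Two minor points. First, the labels \eqref{eq:grad}, \eqref{eq:R}, \eqref{eq:mod} that you quote are the qualitative convergence statements of Lem.~\ref{lem:conv}. The quantitative estimates you actually need are \eqref{eq:PiDGapproxpsi} with $\ell=2$, $j\in\{0,1\}$, and \eqref{eq:PiDGapprox1}. The appendix obtains these by exactly the averaged-Taylor-polynomial argument you sketch.

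Second, your caution about locality is justified. $\Divhke$ couples neighbouring elements through $\jump{\bv_h\cdot\bn}$, so $\Vhk(0)$ is not a product of element spaces. The elementwise locality asserted in the appendix (e.g.\ \eqref{eq:infty}) is therefore not automatic. However, the paper's proof of this corollary rests on the same appendix estimates. Its volume-term bound depends on them even more, through Prop.~\ref{prop:app_V} and Prop.~\ref{prop:app_VR}. So this is not a gap of your proposal relative to the paper.
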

\begin{proof}
  Again, we choose $\bw_h = \PiDG \bv$ in \eqref{eq:opt}. In view of
  Cor.~\ref{cor:app_VG} we have
    \begin{align}\label{eq:h2}
    \|\BF(\Dhke \bv_h) - \BF(\BD \bv)\|^2_{2,\Omega} \lesssim h^2
  \end{align}
  with a constant  depending only on  $k$, $\omega_0$, the
  characteristics of $\SSS$, and $\|\nabla \BF(\BD\bv)\|_{2,\Omega}$. Note that this also holds for
  $\delta=0$. For $\delta> 0$ it is shown in \cite[Lem.~4.4]{bdr-7-5}
  that
  \begin{align*}
    \|\nabla ^2\bv\|_{p,\Omega}^p \lesssim \|\nabla
    \BF(\BD\bv)\|_{2,\Omega}^2  + \|\BF(\BD\bv)\|^2_{2,\Omega} + \delta^p \, \abs{\Omega} 
  \end{align*}
with a constant depending only on $p$. To treat the jump
term we use $\jump{\bv \otimes \bn} =\bzero$, $\bv -\PiDG \bv =\bv -\PiDG \bv
+\PiDG (\bv -\PiDG \bv )$, the approximation and stability properties
of $\PiDG$, namely \eqref{eq:PiDGapproxmglobal} with
$\psi=\phi$ and \eqref{eq:PiDGLW1psistable}  with $\psi=\phi$, and $p\le 2$,  to obtain
\begin{align*}
  m_{\varphi,h}(\PiDG \bv) &=m_{\varphi,h}(\bv-\PiDG \bv)
  \lesssim \rho_{\phi, \Omega}(\nabla _h (\bv-\PiDG \bv)) 
  \\
  &\lesssim \rho_{\phi, \Omega}(h\,\nabla ^2 \bv) 
  \le h^p\, \|\nabla ^2\bv\|_{p,\Omega}^p \lesssim h^p
\end{align*}
with constants depending only on $k$, $\omega_0$, the
  characteristics of $\SSS$, and $\|\BF(\BD\bv)\|_{1,2,\Omega}$.  This
  and \eqref{eq:h2} yield the assertion.
\end{proof}

\begin{rem}
  The convergence rate $h^p$ in Cor.~\ref{cor:rate} is suboptimal compared to
  the results in \cite{kr-pnse-ldg-2}, where a rate $h^2$ is shown
  under an additional assumption on the regularity of the pressure. It
  seems that  Cor.~\ref{cor:rate}  can not be improved, because for
  the quasi-optimality estimate \eqref{eq:opt} we need that the max-shift is zero for $p\le 2$, while
  the approximation properties of $\PiDG$ in \eqref{eq:PiDGapprox1} for $\psi
  =\phi$ yield only $h^p$.
\end{rem}

Due to the presence of the max-shift functional in the
quasi-optimality estimate \eqref {eq:opt} the
proof of the above corollaries seems to work only for $p\le 2$, since
for $p >2$ the max-shift $\bbeta_h(\bv_h)$ is not uniformly bounded
with respect to $h>0$. Thus, neither a adaptation of
\cite[Thm.~5.8]{kr-pnse-ldg-1}, \cite[Thm.~4.8]{kr-orlicz-ldg}
(cf.~Lem.~\ref{lem:ldg_stress.7.1a}) or Cor.~\ref{cor:convn}, Cor.~\ref{cor:ratenn} seems to work.
These observations indicate that it is currently not clear what is the
correct notion of quasi-optimality for DG approximations of the $p$-Stokes problem.

\section{Modified LDG scheme}\label{sec:rob}

In view of the discussion at the end of Section~\ref{sec:ldg} we will
modify the problems (P$_h$) and (Q$_h$) using a different shift
functional $\widetilde{\bbeta}(\bv_h)$, which allows for a pressure
robust error estimate for the velocity, convergence under minimal
regularity assumptions, and convergence rates for the
velocity, which are optimal for linear ansatz function, under additional regularity
assumptions on the velocity $\bv$ only. The new shift functional
$\widetilde{\bbeta}(\bv_h)$ is motivated by the results in
\cite{kr-orlicz-ldg,kr-pnse-ldg-1,kr-pnse-ldg-2}.

    \textbf{Problem ($\widetilde  {\mathbf Q}_h$).} For given $k \in \setN$ and
    $\bff \in \Vo^*$,
    find $(\bv_h,q_h)\!\in V_h^k\times \Qhkeo$ such that 
     for all $(\bz_h,z_h) \in \Vhk\times\Qhkeo$, it holds
    \begin{align}
      &\begin{aligned}
        &\bighskp{\SSS(\Dhke \bv_h )}{\BD \BE_h^k \bz_h}_\Omega
        +\alpha \bigskp{\SSS_{\widetilde{\bbeta}_h(\bv_h)}(h_\Gamma^{-1}
          \jump{\bv_h \otimes \bn})}{ \jump{\bz_h \otimes
            \bn}}_{\Gamma_h}
        \\
        &\quad -\bighskp{q_h}{\Divhke \bz_h}_\Omega =  \langle \bff, \BE_h^k \bz_h\rangle _{\Vo} \,,
        \label{eq:primal0.1n} 
      \end{aligned}
          \\
      \label{eq:primal0.2n}&\bighskp{\Divhke \bv_h}{z_h}_\Omega=0\,,
    \end{align}
    where $\alpha\hspace*{-0.15em}>\hspace*{-0.15em}0$ is the
    \textit{stabilisation parameter},
    $\widetilde{\bbeta}_h\colon
    \hspace*{-0.15em}V_h^k\hspace*{-0.15em}\to\hspace*{-0.15em}
    \mathbb{R}^{\ge 0}$ is the \textit{shift
      functional}, for \mbox{every}~${\bv_h\hspace*{-0.15em}\in\hspace*{-0.15em}
      V_h^k}$, defined via
    \begin{align}\label{def_shiftn}
      \widetilde{\bbeta}_h(\bv_h) \coloneqq  \abs{\avg{\PiDGe\Dhke \bv_h}}\,,
    \end{align}
    where ${\PiDGe: L^1(\Omega)^{d\times d} \to X_h^0}$ is the (local)
    $L^2$--projection into $X_h^0$, and
    $\SSS_{\widetilde \bbeta_h(\bv_h)}$ is defined in
    \eqref{eq:flux}. Note that for $k=1$ the projection $\smash{\PiDGe}$ can be
    omitted.  Again, we eliminate in the system~\eqref{eq:primal0.1n},
    \eqref{eq:primal0.2n}, the variable $q_h\in \smash{\Qhkeo}$ to derive a
    system only expressed in terms of the discrete velocity
    $\bv_h\hspace{-0.1em}\in\hspace{-0.1em} \smash{\Vhk(0)}$, which is
    the discrete counterpart of Problem~(P):
    
    \textbf{Problem ($\widetilde{\mathbf P}_h$).} For given $k \in \setN$ and
    $\bff \in \Vo^*$, find $\bv_h\in V_h^k(0)$ such
    that for all ${\bz_h\in V_h^k(0)}$, it holds
    \begin{align} \label{eq:primaln}
        \begin{aligned}
          &\hspace*{-1.5mm}\bighskp{\SSS(\Dhke \bv_h )}{\BD \BE_h^k \bz_h}_\Omega
        \!+\!\alpha \bigskp{\SSS_{\widetilde{\bbeta}_h(\bv_h)}(h_\Gamma^{-1}
          \jump{\bv_h \otimes \bn})}{ \jump{\bz_h \otimes
            \bn}}_{\Gamma_h}\!=  \langle \bff, \BE_h^k \bz_h\rangle _{\Vo} \,. \hspace*{-2mm}
        \end{aligned}
    \end{align}
    In view of the new shift functional we have to adapt
    Lem.~\ref{lem:max-shift} to the new setting.
    \begin{lem}\label{lem:shift}
      For every $\kappa>0$, there exists a constant $c_\kappa>0$,
      depending on the characteristics~of~$\SSS$, such that for every
      $\bv_h,\bz_h\in V_h^k$ and $\bv\in V$, it holds that
      \begin{align*}
        &\vert (\SSS(\Dhke \bv_h) - \SSS(\BD \bv), \Dhke(\BE_h  \bz_h
          - \bz_h))_{\Omega}\vert
        \\
        &\lesssim \kappa \,\|\BF(\Dhke \bv_h) - \BF(\BD
          \bv)\|^2_{2,\Omega}
          + \|\PiDGe \BF(\BD\bv) - \BF(\BD \bv)\|^2_{2,\Omega}
        \\
        &\quad +\sum _{F \in
          \Gamma_h} \| \mean{\BF(\BD
          \bv)}_{S_F}-\BF(\BD\bv) \|^2_{2,S_F}
 +  c_\kappa\, m_{\varphi_{\abs{\avg{\PiDGe\Dhke\bv_h}}},h}(\bz_h)\,,
      \end{align*}
      where the constants depend only on $k$, $\omega_0$, and the characteristics of $\BS$.
    \end{lem}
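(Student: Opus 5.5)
The plan follows the proof of Lem.~\ref{lem:max-shift}; the change is that the shift in the Young inequality is now the piecewise constant $a\coloneqq\abs{\avg{\PiDGe\Dhke\bv_h}}$, no longer the max-shift. By \eqref{eq:hammere} and the $\varepsilon$-Young inequality \eqref{ineq:young} with $\psi=\varphi_a$ (applied face patch by face patch, $a$ being constant on each $S_F$), we get
\begin{align*}
  &\vert (\SSS(\Dhke \bv_h) - \SSS(\BD \bv), \Dhke(\BE_h \bz_h - \bz_h))_{\Omega}\vert
  \\
  &\le c_\kappa\,\rho_{\varphi_a,\Omega}(\Dhke(\BE_h\bz_h-\bz_h))
  +\kappa\,\rho_{(\varphi_a)^*,\Omega}\big(\varphi'_{\abs{\Dhke\bv_h}}(\abs{\Dhke\bv_h-\BD\bv})\big)\,.
\end{align*}
For the first term, apply Prop.~\ref{prop:n-function_E} with $\psi=\varphi_{a}$ on each element; the $\Delta_2$-constants are uniform in the shift by Rem.~\ref{rem:phi_a}. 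Since the face average is attached to the patch $S_F$, we have to check that the constant shift on the elements and faces of $\Gamma_h(K)$ can be compared. If this is awkward, we will instead organise the sum over $F$ with the patch-constant shift $a_F$ from the start. Summing with $h_F\sim h_K$ then gives $c_\kappa\, m_{\varphi_a,h}(\bz_h)$.

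The second term is the main obstacle. In Lem.~\ref{lem:max-shift} we used $(\varphi_{\bbeta_h})^*\le(\varphi_{\abs{\Dhke\bv_h}})^*$, but that inequality is no longer available. We therefore perform a change of shift with Lem.~\ref{lem:shift-change} (third inequality): on $S_F$,
\[
(\varphi_a)^*(t)\le c\,(\varphi_{\abs{\Dhke\bv_h}})^*(t)+\abs{\BF(\Dhke\bv_h)-\BF(\BA_F)}^2,
\]
where $\BA_F\coloneqq\avg{\PiDGe\Dhke\bv_h}$ so that $a=\abs{\BA_F}$. With $t=\varphi'_{\abs{\Dhke\bv_h}}(\abs{\Dhke\bv_h-\BD\bv})$, the first part is handled exactly as in \eqref{eq:max-shift.3}, via \eqref{eq:psi'} and \eqref{eq:hammera}, and is bounded by $\|\BF(\Dhke\bv_h)-\BF(\BD\bv)\|_{2,\Omega}^2$. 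The change-of-shift remainder must then be split by the triangle inequality as
\[
\abs{\BF(\Dhke\bv_h)-\BF(\BD\bv)}^2+\abs{\BF(\BD\bv)-\mean{\BF(\BD\bv)}_{S_F}}^2+\abs{\mean{\BF(\BD\bv)}_{S_F}-\BF(\BA_F)}^2 .
\]

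It remains to bound the last, constant term by $\dashint_{S_F}$ of the earlier errors. Using the fact that $\BF$ is essentially a ``shifted'' homeomorphism, the standard estimate $\abs{\BF(\mean{\BG})-\mean{\BF(\BG)}}^2\lesssim\dashint\abs{\BF(\BG)-\mean{\BF(\BG)}}^2$ of \cite{dr-nafsa,DK08} (the $L^2$-best-constant property of means for $\BF$), together with the stability of the $L^2$-projection $\PiDGe$ in the $\BF$-sense, we compare $\BF(\BA_F)$ with $\avg{\PiDGe\BF(\Dhke\bv_h)}$ up to oscillations. We then pass from $\PiDGe\BF(\Dhke\bv_h)$ to $\PiDGe\BF(\BD\bv)$ by $L^2$-stability of $\PiDGe$, and finally to $\BF(\BD\bv)$, which produces the term $\|\PiDGe\BF(\BD\bv)-\BF(\BD\bv)\|_{2,\Omega}^2$. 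Multiplying by $\abs{S_F}$ and summing over $F$, with finite overlap of the patches, yields the stated bound, all remainders being multiplied by $\kappa\le1$. Care is needed because the average over $F$ mixes two elements; the jump of $\PiDGe\BF(\BD\bv)$ across $F$ is controlled by the patch oscillation term $\sum_F\|\mean{\BF(\BD\bv)}_{S_F}-\BF(\BD\bv)\|_{2,S_F}^2$.
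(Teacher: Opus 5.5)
\textbf{There is a genuine gap in your bound for the first term} $c_\kappa\,\rho_{\varphi_a,\Omega}(\Dhke(\BE_h\bz_h-\bz_h))$. This is exactly where the paper's proof does its main work.

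Prop.~\ref{prop:n-function_E} holds for one fixed N-function. If you apply it on $K$ with whatever constant shift $b$ you attach to $K$ (or to a patch $S_F\supset K$), you get $\sum_{F'\in\Gamma_h(K)} h_{F'}\int_{F'}\varphi_b(\abs{h_{F'}^{-1}\jump{\bz_h\otimes\bn}})\,\textup{d}s$. The same $b$ appears on every face, not the face's own shift $\abs{\avg{\PiDGe\Dhke\bv_h}}_{F'}$. On $K$, $\BE_h\bz_h-\bz_h$ depends on the jumps over all faces touching $K$, because of the nodal averaging. So organising the sum patchwise does not get rid of the faces $F'\neq F$. Hence ``summing with $h_F\sim h_K$'' does not produce $c_\kappa\, m_{\varphi_{\abs{\avg{\PiDGe\Dhke\bv_h}}},h}(\bz_h)$. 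On every face you need a change of shift (Lem.~\ref{lem:shift-change}) with a small parameter.

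Here is how the paper does it:
\begin{itemize}
\item It applies Young's inequality with the elementwise shift $\abs{\PiDGe\Dhke\bv_h}=\abs{\mean{\Dhke\bv_h}_K}$. This is a genuine function on $\Omega$; your $\rho_{\varphi_a,\Omega}$, built from a face average, is not defined as written.
\item After Prop.~\ref{prop:n-function_E}, it changes the shift on each face to $\abs{\avg{\PiDGe\Dhke\bv_h}}$.
\item This leaves the remainder $\varepsilon\, h_F\int_F\varphi_{\abs{\mean{\Dhke\bv_h}_K}}(\abs{\mean{\Dhke\bv_h}_K-\avg{\PiDGe\Dhke\bv_h}})\,\textup{d}s$.
\item Jensen's inequality and further shift changes bound this by $\varepsilon\int_{S_F}\abs{\BF(\Dhke\bv_h)-\BF(\mean{\Dhke\bv_h}_{S_F})}^2\,\textup{d}x$.
\item With \cite[Lem.~A.3]{bdr-phi-stokes}, this becomes $\varepsilon\,(\norm{\BF(\Dhke\bv_h)-\BF(\BD\bv)}^2_{2,S_F}+\norm{\BF(\BD\bv)-\mean{\BF(\BD\bv)}_{S_F}}^2_{2,S_F})$.
\end{itemize}
This remainder contains the error itself and sits under the large prefactor $c_\kappa$, so you must take $\varepsilon=\kappa/c_\kappa$. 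That is why the modular ends up with a new constant $c_\kappa$. It is also where the patch-oscillation term in the statement comes from. For faces in $\Gamma_h(K)$ that meet $K$ only in a vertex or an edge, the shift difference must be chained through face neighbours in the vertex patch. The paper writes this out only for faces of $K$.

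\textbf{With the elementwise shift, the dual term is the easy part.} The shift change to $\abs{\Dhke\bv_h}$ leaves only $\norm{\BF(\Dhke\bv_h)-\BF(\PiDGe\Dhke\bv_h)}^2_{2,\Omega}$. By \cite[Lem.~A.3]{bdr-phi-stokes} and the stability of $\PiDGe$, this becomes the error plus $\norm{\PiDGe\BF(\BD\bv)-\BF(\BD\bv)}^2_{2,\Omega}$, with no face patches involved.

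Your patchwise dual estimate is plausible once the Young step is made meaningful. For example, write $\int_\Omega g\,\textup{d}x=\frac{1}{d+1}\sum_{F\in\Gamma_h}\int_{S_F}g\,\textup{d}x$, with $S_F\coloneqq K$ for boundary faces. But choosing the face-average shift only moves the difficulty. You called the dual term the main obstacle. The step you skipped, comparing the volume shift with the face shifts inside the jump modular, is the one that actually needs the small-parameter shift change.
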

	
    \begin{proof}
      Using  \eqref{eq:hammere} and the $\varepsilon$-Young inequality
      \eqref{ineq:young} with $\psi=\varphi_{\abs{\PiDGe\Dhke \bv_h}}$, we find
      that
      \begin{align}
        \label{eq:max-shift.1n}
        \begin{aligned}
          &\vert (\SSS(\Dhke \bv_h) - \SSS(\BD \bv),
          \Dhke(\BE_h \bz_h - \bz_h))_{\Omega}\vert\\
          &\lesssim  c_\kappa\int_\Omega\varphi_{\abs{\PiDGe\Dhke
              \bv_h}}(\abs{\Dhke(\BE_h \bz_h - \bz_h)})\, \textup{d}x
          \\
          &\quad +\kappa\int_\Omega(\varphi_{\abs{\PiDGe\Dhke
              \bv_h}})^*(\varphi'_{\smash{|\Dhke \bv_h|}}(|\Dhke \bv_h
          - \BD \bv|))\, \textup{d}x
          \\
          &\eqqcolon c_\kappa\, I_1 +\kappa\,I_2\,,
        \end{aligned}
      \end{align}
      where $c_\kappa$ also depends on the characteristics of $\SSS$.
      Using a shift change (cf.~Lem.~\ref{lem:shift-change}),
      \eqref{eq:psi'}, again Lem.~\ref{lem:hammer}, that
      $\PiDGe \Dhke\bv_h = \mean{\Dhke\bv_h}_K$ on every
      $K\in \mathcal T_h$, \cite[Lem.~A.3]{bdr-phi-stokes}, adding and
      subtracting $\BF(\BD\bv)$ as well as $\BF(\PiDGe{\BD\bv})$, the stability of $\PiDGe$, a shift change (cf.~Lem.~\ref{lem:shift-change}), and again \cite[Lem.~A.3]{bdr-phi-stokes}, we obtain
      \begin{align}
        \label{eq:I2}
        \begin{aligned}
          I_2 &\lesssim \int_\Omega (\varphi_{\abs{\Dhke
              \bv_h}})^*(\varphi'_{\smash{|\Dhke \bv_h|}}(|\Dhke \bv_h
          - \BD \bv|))\, \textup{d}x
          \\
          &\quad + \int_\Omega \abs{\BF ({\Dhke
              \bv_h})-\BF({\PiDGe\Dhke \bv_h})}^2 \, \textup{d}x
          \\
          &\lesssim \norm{\BF ({\Dhke
              \bv_h})-\BF({\BD\bv})}_{2,\Omega}^2 +\norm{\BF
            ({\Dhke \bv_h})-\BF({\PiDGe\Dhke
              \bv_h})}_{2,\Omega}^2
          \\
          &\lesssim \norm{\BF ({\Dhke
              \bv_h})-\BF({\BD\bv})}_{2,\Omega}^2 +\norm{\BF
            ({\Dhke \bv_h})-\PiDGe\BF({\Dhke
              \bv_h})}_{2,\Omega}^2
          \\
          &\lesssim \norm{\BF ({\Dhke
              \bv_h})-\BF({\BD\bv})}_{2,\Omega}^2 +\norm{\BF
            ({\BD \bv})-\PiDGe\BF({\BD \bv})}_{2,\Omega}^2
        \end{aligned}
      \end{align}
      with constants depending only on the characteristics of
      $\SSS$.
      Using again that for every $K\in \mathcal T_h$ we have 
      $\PiDGe \Dhke\bv_h = \mean{\Dhke\bv_h}_K$,  we re-write $I_1$ as follows
      \begin{align}
        \label{eq:I1}
        \begin{aligned}
          I_1&=\sum _{K\in \mathcal T_h} \int_K\varphi_{\abs{\mean{\Dhke
              \bv_h}_K}}(\abs{\Dhke(\BE_h \bz_h - \bz_h)})\, \textup{d}x
        \eqqcolon \sum _{K\in \mathcal T_h} A_K\,.
        \end{aligned}
      \end{align}
      From the approximation properties of $\BE_h$ in Prop.~\ref{prop:n-function_E}, Lem.~\ref{lem:hammer},  and a shift change
      (cf.~Lem.~\ref{lem:shift-change}) follows
      \begin{align}
        \label{eq:A}
        \begin{aligned}
          A_K&\lesssim h_F\sum_{F\in
            \Gamma_h(K)}\int_F{\varphi_{\abs{\mean{\Dhke
                  \bv_h}_K}}}(\vert h_F^{-1}\jump{\bz_h \otimes
            \bn}_F\vert)\, \textup{d}s
          \\
          &\le c_\varepsilon \, h_F\sum_{F\in
            \Gamma_h(K)}\int_F{\varphi_{\abs{\avg{\PiDGe{\Dhke
                    \bv_h}}}}}(\vert h_F^{-1}\jump{\bz_h \otimes
            \bn}_F\vert)\, \textup{d}s
          \\
          &\quad +\vep \, h_F\sum_{F\in \Gamma_h(K)}\int_F
          \varphi_{\abs{\mean{\Dhke \bv_h}_K}}( \abs{\mean{\Dhke \bv_h}_K
        -\avg{\PiDGe{\Dhke \bv_h}}})\, \textup{d}s
          \end{aligned}
      \end{align}
      with $c_\vep$ depending also on on $k$, $\omega_0$, and the
      characteristics of $\SSS$. For given $K\in \mathcal T_h$ let
      $F\in \Gamma_h(K)$ be such that $F =\partial K \cap \partial K'$
      for some $K'\in \mathcal T_h$. Using that $\mean{\Dhke \bv_h}_K$
      and $\avg{\PiDGe{\Dhke \bv_h}}$ are constant on $F$, the
    definition of  averages, the $\Delta_2$-condition of
    $\phi$, Jensen's inequality, a shift change
      (cf.~Lem.~\ref{lem:shift-change}), $K\subset
      S_F$, $\abs{S_F} \sim \abs{K}$, adding and subtracting
      $\mean{\Dhke\bv_h}_{S_F}$,  Jensen's inequality, and
      \eqref{eq:hammera}, we get
    \begin{align*}
        &\int_F \varphi_{\abs{\mean{\Dhke \bv_h}_K}}( \abs{\mean{\Dhke \bv_h}_K
            -\avg{\PiDGe{\Dhke \bv_h}}})\, \textup{d}s
          \\
          &=h_F^{d-1}\, \varphi_{\abs{\mean{\Dhke \bv_h}_K}}\big ( \abs{\tfrac
            12 (\mean{\Dhke \bv_h}_K-\mean{\Dhke \bv_h}_{K'})}\big )
          \\
          &\lesssim h_F^{d-1}\, \varphi_{\abs{\mean{\Dhke
                \bv_h}_K}}\Big (
          \abs{ \dashint_{K'} \Dhke \bv_h-\mean{\Dhke \bv_h}_{K}\,
            \textup{d}x}\Big )
          \\
          &\lesssim h_F^{-1}\,\int_{K'} \varphi_{\abs{\mean{\Dhke \bv_h}_K}}(
          \abs{\Dhke \bv_h-\mean{\Dhke \bv_h}_{K}})\,       \textup{d}x
          \\
          &\lesssim h_F^{-1}\,\int_{S_F} \varphi_{\abs{\mean{\Dhke \bv_h}_{S_F}}}(
          \abs{\Dhke \bv_h-\mean{\Dhke \bv_h}_{K}})\,       \textup{d}x
          \\
          &\quad + h_F^{-1}\,\int_{S_F} \varphi_{\abs{\mean{\Dhke
                \bv_h}_{S_F}}}\Big (
          \dashint _K \abs{\Dhke \bv_h-\mean{\Dhke
              \bv_h}_{S_F}}\,\textup{d}y\Big )\, \textup{d}x
          \\
          &\lesssim h_F^{-1}\,\int_{S_F} \varphi_{\abs{\mean{\Dhke \bv_h}_{S_F}}}(
          \abs{\Dhke \bv_h-\mean{\Dhke \bv_h}_{S_F}})\, \textup{d}x
          \\
           &\lesssim h_F^{-1}\,\int_{S_F} \abs{\BF(\Dhke
             \bv_h)-\BF(\mean{\Dhke \bv_h}_{S_F})}^2\,       \textup{d}x 
    \end{align*}
    with constants depending only on the characteristics of
    $\SSS$. Inserting this into \eqref{eq:A}, using
    \cite[Lem.~A.3]{bdr-phi-stokes}, adding and subtracting
    $\BF(\BD\bv)$ as well as $\mean{\BF(\BD\bv)}_{S_F}$, and using
    Jensen's inequality we arrive at
      \begin{align*}
          A_K&\lesssim c_\varepsilon \, h_F\hspace*{-3mm}\sum_{F\in
            \Gamma_h(K)}\int_F\varphi_{\abs{\avg{\PiDGe{\Dhke
                  \bv_h}}}}(\vert h_F^{-1}\jump{\bz_h \otimes
            \bn}_F\vert)\, \textup{d}s
          \\
          &\quad +\vep \hspace*{-3mm}\sum_{F\in
            \Gamma_h(K)}\norm{\BF(\Dhke \bv_h)-\BF(\BD \bv)}^2_{2,S_F}
          +\norm{\BF(\BD \bv)-\mean{\BF(\BD\bv)}_{S_F}}^2_{2,S_F}
      \end{align*}
    with a constant depending only on $k$, $\omega_0$, and the
    characteristics of $\SSS$. Thus, \eqref{eq:I1}  yields
      \begin{align}
        \label{eq:I1end}
        \begin{aligned}
          I_1&\lesssim c_\varepsilon \,
          m_{\varphi_{\abs{\avg{\PiDGe{\Dhke\bv_h}}}},h} (\bz_h ) 
          \\
          &\quad +\vep \sum_{F\in
            \Gamma_h}\norm{\BF(\Dhke \bv_h)-\BF(\BD \bv)}^2_{2,S_F}
          +\norm{\BF(\BD \bv)-\mean{\BF(\BD\bv)}_{S_F}}^2_{2,S_F}
        \end{aligned}
      \end{align}
       with a constant depending only on $k$, $\omega_0$, and the
    characteristics of $\SSS$. Choosing $\vep$ such that $c_\kappa
    \vep =\kappa$, we obtain the assertion from
    \eqref{eq:max-shift.1n}, \eqref{eq:I2} and \eqref{eq:I1end}. 
  \end{proof}
  
  \begin{prop}[well-posedness and
    stability] \label{prop:stabPhn} Assume that $\SSS$ satisfies
    Ass.~\ref{assum:extra_stress} for some \linebreak $p\in (1,\infty)$,
    $\delta\in [0,\infty)$. Then, for $\alpha>0$ sufficiently large,
    depending only on the characteristics of $\SSS$, the
    problem ($\widetilde P_h$), i.e., \eqref{eq:primaln},
    \eqref{def_shiftn}, admits a solution $\bv_h\in \Vhk(0)$ for every
    $\bff \in \Vo^*$. Moreover, there holds 
    \begin{align*}
      \|\bv_h\|_{\nabla ,p,h}^p \lesssim \|\bff\|_{-1,p'}^{p'}
      +\delta^p\,\vert \Omega\vert\,,
    \end{align*}
    where the constant depends only on $k$, $\omega_0$, and the
    characteristics of $\BS$. 
  \end{prop}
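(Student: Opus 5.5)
We mimic the proof of Prop.~\ref{prop:stabPh}, replacing the max-shift $\bbeta_h(\bv_h)$ by $\widetilde{\bbeta}_h(\bv_h)=\abs{\avg{\PiDGe\Dhke\bv_h}}$. We equip $\Vhk$ with $\|\cdot\|_{\nabla,p,h}$ and define $\widetilde{\BA}_h\colon\Vhk\to(\Vhk)^*$ by the left-hand side of \eqref{eq:primaln}, with $\bz_h\in\Vhk$.

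\emph{Continuity.} The map $\bv_h\mapsto\widetilde{\bbeta}_h(\bv_h)$ is continuous on the finite dimensional space $\Vhk$, since it is a composition of linear maps with $\abs{\cdot}$. Moreover, $(a,\BA)\mapsto\SSS_a(\BA)$ is jointly continuous. Hence, as before, \cite[Lem.~3.18]{bdr-7-5} shows that $\widetilde{\BA}_h$ is well-defined and continuous, and thus pseudo-monotone.

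\emph{Boundedness.} We repeat \eqref{lem:ldg_stress.5a}--\eqref{lem:ldg_stress.7.1a}, using $\abs{\SSS_a(\BA)}\lesssim\delta^{p-1}+a^{p-1}+\abs{\BA}^{p-1}$. Since $\widetilde{\bbeta}_h(\bv_h)\le\|\Dhke\bv_h\|_{\infty,\Omega}\le c(\Vhk)\|\bv_h\|_{\nabla,p,h}$, we get the same bound as in the case $p>2$, and for $p\le2$ as well. Only a bound on bounded sets is needed, so the $h$-dependent constant is harmless.

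\emph{Coercivity.} This is the main point and differs from before, since we no longer have $\varphi_{\widetilde{\bbeta}_h}\ge\varphi$ for all $p$. We write
\[
\langle\widetilde{\BA}_h\bv_h,\bv_h\rangle=(\SSS(\Dhke\bv_h),\Dhke\bv_h)_\Omega+\alpha\langle\SSS_{\widetilde{\bbeta}_h}(h_\Gamma^{-1}\jump{\bv_h\otimes\bn}),\jump{\bv_h\otimes\bn}\rangle_{\Gamma_h}+(\SSS(\Dhke\bv_h),\Dhke(\BE_h\bv_h-\bv_h))_\Omega.
\]
The first term is $\gtrsim\rho_{\varphi,\Omega}(\Dhke\bv_h)$, and the second is $\gtrsim\alpha\,m_{\varphi_{\widetilde{\bbeta}_h},h}(\bv_h)$.

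For the third term, we apply the argument of Lem.~\ref{lem:shift} with $\bv=\bzero$, i.e.\ with $\SSS(\BD\bv)=\bzero$ and $\BF(\BD\bv)=\bzero$. Then the data terms $\|\PiDGe\BF(\BD\bv)-\BF(\BD\bv)\|^2$ and the patch oscillations vanish. This yields
\[
\abs{(\SSS(\Dhke\bv_h),\Dhke(\BE_h\bv_h-\bv_h))_\Omega}\le\kappa\,c\,\|\BF(\Dhke\bv_h)\|_{2,\Omega}^2+c_\kappa\,m_{\varphi_{\widetilde{\bbeta}_h(\bv_h)},h}(\bv_h).
\]
Here we should check that the proof of Lem.~\ref{lem:shift} really uses only Young's inequality, shift changes and Prop.~\ref{prop:n-function_E}, and is therefore valid for arbitrary $\bv\in V$, including $\bv=\bzero$. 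Since $\|\BF(\Dhke\bv_h)\|_2^2\sim\rho_{\varphi,\Omega}(\Dhke\bv_h)$, we choose $\kappa$ small and then $\alpha$ large. This gives
\[
\langle\widetilde{\BA}_h\bv_h,\bv_h\rangle\gtrsim\rho_{\varphi,\Omega}(\Dhke\bv_h)+m_{\varphi_{\widetilde{\bbeta}_h(\bv_h)},h}(\bv_h).
\]

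To obtain a $p$-norm lower bound, we use $\varphi_a(t)+a^p+\delta^p\gtrsim t^p$ for $p\ge2$, and $\varphi_a(t)\gtrsim t^p-(\delta+a)^p$ for $p<2$. We must absorb the term $\int_{\Gamma_h}h_\Gamma\widetilde{\bbeta}_h(\bv_h)^p\,\textup{d}s$, and this is where the main obstacle lies. By Jensen's inequality and $\abs{S_F}\sim h_F\abs{F}$,
\[
\int_{\Gamma_h}h_\Gamma\widetilde{\bbeta}_h^p\,\textup{d}s\lesssim\|\Dhke\bv_h\|_{p,\Omega}^p\lesssim\rho_{\varphi,\Omega}(\Dhke\bv_h)+\delta^p\abs{\Omega}.
\]
Hence
\[
\alpha\,m_{\varphi_{\widetilde{\bbeta}_h},h}\ge c\alpha\|h_\Gamma^{-1/p'}\jump{\bv_h\otimes\bn}\|_p^p-C\alpha\bigl(\rho_{\varphi,\Omega}(\Dhke\bv_h)+\delta^p\abs{\Omega}\bigr),
\]
which cannot be absorbed directly, because the subtracted term carries a factor $\alpha$. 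To avoid this, we split the jump term. Half of the coercivity constant in front of the jump term is used to absorb the $c_\kappa$ term coming from Lem.~\ref{lem:shift}. For the other half, we do not use the crude bound; instead we use $t^p\lesssim\varphi_a(t)+\varphi_a'(a)\,a\,\chi_{\{t\le a\}}$, and otherwise compare with a smaller multiple, i.e.\ we keep a fixed $\alpha_0$ rather than $\alpha$. This gives
\[
\|\Dhke\bv_h\|_p^p+\|h_\Gamma^{-1/p'}\jump{\bv_h\otimes\bn}\|_p^p\lesssim\langle\widetilde{\BA}_h\bv_h,\bv_h\rangle+(1+\alpha)\delta^p\abs{\Omega},
\]
and then \eqref{eq:equi2} together with Prop.~\ref{korn} controls $\|\bv_h\|_{\nabla,p,h}^p$. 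Coercivity follows; surjectivity on $\Vhk(0)$ then follows from \cite[Thm.~27.A]{zei-IIB}. Testing \eqref{eq:primaln} with $\bz_h=\bv_h$ and using \eqref{eq:stab-orlicz1} with $\psi(t)=t^p$ and Young's inequality gives the a priori estimate.
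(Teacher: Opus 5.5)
Your proof is correct, but your coercivity argument is different from the paper's. The paper (Prop.~\ref{prop:coer}) splits into two cases. For $p\ge2$ it uses $\varphi_{\widetilde{\bbeta}_h(\bv_h)}\ge\varphi$ and handles the $\BE_h$-term by Young's inequality with $\psi=\varphi$ and Prop.~\ref{prop:n-function_E}. For $p<2$ it uses the shift inequality \eqref{eq:shiftp} with $\varepsilon\sim\alpha^{-1/4}$, so the gain in the unshifted $m_{\varphi,h}(\bv_h)$ grows like $\alpha^{1/2}$ while the loss in $\rho_{\varphi,\Omega}(\Dhke\bv_h)$ stays fixed.

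You instead bound the $\BE_h$-term directly in the shifted modular $m_{\varphi_{\widetilde{\bbeta}_h(\bv_h)},h}(\bv_h)$ via Lem.~\ref{lem:shift} with $\bv=\bzero$. Its statement already covers arbitrary $\bv\in V$, so nothing needs re-checking, and the oscillation terms vanish. This treats all $p$ at once and avoids tying $\varepsilon$ to $\alpha$, because the shift is kept on both sides. The paper's route, by contrast, yields $m_{\varphi,h}(\bv_h)$ directly and does not rely on the neighbour-averaging argument inside Lem.~\ref{lem:shift}.

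Your $h$-dependent boundedness bound is enough for this statement. Note, however, that the paper proves the $h$-uniform bound \eqref{lem:ldg_stress.7.1}, via \eqref{eq:PiDGapproxmglobal1} and the stability of $\PiDGe$, and Prop.~\ref{prop:stabqhn} needs that uniform bound for the pressure estimate.

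The ``main obstacle'' in your last step does not exist, and the fix you sketch should be replaced by the following:
\begin{itemize}
\item You already have $\langle\BA_h\bv_h,\bv_h\rangle_{V_h^k}\ge c\,\rho_{\varphi,\Omega}(\Dhke\bv_h)+c\,m_{\varphi_{\widetilde{\bbeta}_h(\bv_h)},h}(\bv_h)$, with $c$ independent of $\alpha\ge\alpha_0$.
\item Apply $\varphi_a(t)\ge c\,t^p-C(\delta+a)^p$, which holds for all $p$, only to $\theta\, m_{\varphi_{\widetilde{\bbeta}_h(\bv_h)},h}(\bv_h)$ with a small fixed $\theta>0$.
\item Absorb the resulting term $\theta C\int_{\Gamma_h}h_\Gamma\,\widetilde{\bbeta}_h(\bv_h)^p\,\textup{d}s\lesssim\theta C\big(\rho_{\varphi,\Omega}(\Dhke\bv_h)+\delta^p|\Omega|\big)$ into the volume term.
\end{itemize}
This also removes your factor $(1+\alpha)$. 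The pointwise bound $t^p\lesssim\varphi_a(t)+\varphi_a'(a)\,a\,\chi_{\{t\le a\}}$ is unnecessary, and it is false for $\delta>0$: take $a=0$, $p<2$ and let $t\to0$. Drop it.
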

    
  \begin{proof}
    The proof goes along the lines of the proof of Prop.~\ref{prop:stabPh}
    with small changes due to the new shift functional. We again equip $V_h^k$ with the norm $\|\cdot\|_{\nabla, p,h}$, and
    consider the operator 
    ${\BA_h\coloneqq \BA_h^k\colon V_h^k\to (V_h^k)^*}$, for every
    $\bv_h, \bz_h\in V_h^k $, defined via
    \begin{align*}
      \langle \BA_h\bv_h,\bz_h\rangle_{V_h^k}\coloneqq 
      (\SSS(\Dhke \bv_h),\BD \BE_h  \bz_h)_{\Omega}
      + \alpha\, \langle \SSS_{\widetilde \bbeta_h(\bv_h)}( h_\Gamma^{-1} \jump{\bv_h \otimes\bn} ) , \jump{\bz_h \otimes\bn}\rangle_{\Gamma_h}\,.
    \end{align*}
    Since $V_h^k$ is finite dimensional, consists of broken
    polynomials, \cite[Lem.~3.18]{bdr-7-5}, the properties of $\SSS$
    and $\SSS_a$, $a\ge 0$, imply that the operator
    $\BA_h\colon V_h^k\to (V_h^k)^*$, for every fixed $h>0$, is
    well-defined, continuous, and, thus, pseudo-monotone. To prove the boundedness of $\BA_h\colon V_h^k\to (V_h^k)^*$, we use
H\"older's inequality to get
    \begin{align}\label{lem:ldg_stress.5}
      \langle \BA_h\bv_h ,\bz_h \rangle_{\Vhk}
      & \lesssim \|\SSS(\Dhke\bv_h )\|_{p',\Omega}\|\BD \BE_h \bz_h \|_{p,\Omega}
      \\
      &\quad+\alpha  \big\|h_\Gamma^{1/p'}\SSS_{\smash{\abs{\avg{\PiDGe\Dhke \bv_h}}
      }}(h_\Gamma^{-1}\jump{\bv_h \otimes
      \bn})\big\|_{p',\Gamma_{h}}\big\|h_\Gamma^{-1/p'}\jump{\bz_h
      \otimes \bn}\big\|_{p,\Gamma_{h}}\notag 
      \\
      &=\vcentcolon I_1\,\|\BD\BE_h\bz_h \|_{p,\Omega}+\alpha\,I_2
\|h_\Gamma^{-1/p'}\jump{\bz_h \!\otimes\!
        \bn}\|_{p,\Gamma_{h}}\,.\notag 
    \end{align}
    The term $I_1$ is estimated as in  \eqref{lem:ldg_stress.6a}
    yielding
    \begin{align}
    \label{lem:ldg_stress.6}
    \begin{aligned}
        I_1^{p'}&  \lesssim \|\bv_h \big\|_{\nabla,p,h}^p+\delta^p\,\vert \Omega\vert\,.
    \end{aligned}
    \end{align}
    Using that
    $\vert\SSS_a(\BA)\vert \leq
    c\,(\delta^{p-1}+a^{p-1}+\vert\BA\vert^{p-1})$ for all
    $\BA\in \mathbb{R}^{d\times d}$~and~${a\ge 0}$ (cf.~Ass.~\ref{assum:extra_stress},  Rem.~\ref{rem:phi_a}),
    $\int_{\Gamma_h}h_\Gamma \, \textup{d}s\sim |\Omega|$,
    the trace inequality \eqref{eq:PiDGapproxmglobal1}, the stability
    of $\PiDGe$ (cf.~\cite[(A.11)]{dkrt-ldg}), and
    $\abs{\Dhke \bv_h}\le \abs{\Ghke \bv_h}$ 
    we obtain
    \begin{align}
    \label{lem:ldg_stress.7}
    \begin{aligned}
        I_2^{p'}&\lesssim \|h_\Gamma^{-1/p'}\jump{\bv_h \otimes \bn}\|_{p,\Gamma_{h}}^p
        + \int_{\Gamma_h} h_\Gamma \big (\delta^p +
        |\avg{\PiDGe\Dhke\bv_h}|^p\big ) \, \textup{d}s
        \\
        &\lesssim \big\|h_\Gamma^{-1/p'}\jump{\bv_h \otimes \bn}\big\|_{p,\Gamma_{h}}^p
        +\delta^p\,\vert \Omega\vert+\|\PiDGe\Dhke \bv_h\|_{p,\Omega}^p
        \\&\lesssim \|h_\Gamma^{-1/p'}\jump{\bv_h \otimes
          \bn}\|_{p,\Gamma_{h}}^p +\delta^p\,\vert \Omega\vert+\|\Ghke\bv_h \big\|_{p,\Omega}^p\,.
        \end{aligned}
    \end{align}
    Finally, using \eqref{lem:ldg_stress.6}, \eqref{lem:ldg_stress.7}, the stability estimate \eqref{eq:stab-orlicz1}, 
    and the norm equivalence \eqref{eq:eqiv0} in
    \eqref{lem:ldg_stress.5}, we~conclude~that
    \begin{align}\label{lem:ldg_stress.7.1}
      \big\|\BA_h\bv_h \big\|_{\smash{(\Vhk)^*}}^{p'}\lesssim \|\bv_h \|_{\nabla,p,h}^p+\delta^p\,\vert \Omega\vert\,.
    \end{align}
    It remains to prove the coercivity of the operator
    $\BA_h\colon V_h^k\to (V_h^k)^*$. This is essentially already
    proved in \cite[Lem.~5.2]{kr-pnse-ldg-1} based on
    \cite[Lem.~4.1]{kr-orlicz-ldg} for a slightly different shift
    functional. Due to the presence of the smoothing operator $\BE_h$
    in \eqref{eq:primaln} the argument is more subtle. Thus, the
    details are carried out in  Prop.~\ref{prop:coer}. There, it is proved
    that for sufficiently large $\alpha$ there holds
    \begin{align}\label{eq:aprin}
        \langle \BA_h\bv_h,\bv_h\rangle_{V_h^k} &\gtrsim \|
        \bv_h\|_{\nabla ,p,h }^p-\delta^p\,\vert
        \Omega\vert\,.
    \end{align}
    Now we are in the same situation as in \eqref{eq:apri}, and thus,
    we can finish the proof as in the proof of  Prop.~\ref{prop:stabPh}.
    \end{proof}
    \begin{prop}[well-posedness and
      stability] \label{prop:stabqhn} Assume that $\SSS$ satisfies
      Ass.~\ref{assum:extra_stress} for some \linebreak $p\in (1,\infty)$,
      $\delta\in [0,\infty)$. Then, for $\alpha>0$ sufficiently large,
      depending only on $k$, $\omega_0$ and the characteristics of $\SSS$, the
      problem ($\widetilde Q_h$), i.e., \eqref{eq:primal0.1n}--\eqref{def_shiftn},
      admits a solution $(\bv_h, q_h)\in \Vhk \times
      \Qhkeo$ for every $\bff \in \Vo^*$. Moreover, 
      there holds 
      \begin{align}\label{eq:apriQhn}
        \begin{aligned}
          \|\bv_h\|_{\nabla ,p,h}^p &\lesssim
          \|\bff\|_{-1,p'}^{p'} +\delta^p\,\vert \Omega\vert\,,
          \\
          \|q_h\|_{p'} ^{p'} &\lesssim
          \|\bff\|_{-1,p'}^{p'} + \delta^p\,\vert
          \Omega\vert \,,
        \end{aligned}
      \end{align}
      where the constants depend only on $k$, $\omega_0$, $p$ and the
      characteristics of $\BS$. 
    \end{prop}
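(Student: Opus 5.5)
The plan mirrors the proof of Prop.~\ref{prop:stabqh}, with Prop.~\ref{prop:stabPhn} in place of Prop.~\ref{prop:stabPh}. First, Prop.~\ref{prop:stabPhn} gives a solution $\bv_h\in\Vhk(0)$ of ($\widetilde P_h$). It also gives the velocity estimate \eqref{eq:apriQhn}$_1$, where $\BA_h$ now denotes the operator with shift $\widetilde\bbeta_h$.

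\emph{Existence of the pressure.} Testing \eqref{eq:primaln} shows that $\BA_h\bv_h-\BE_h^*\bff$ annihilates $\Vhk(0)$. Here $\BE_h^*\colon\Vo^*\to(\Vhk)^*$ is bounded by \eqref{eq:stab-orlicz1} with $\psi(t)=t^p$ together with \eqref{eq:eqiv0}. The norm version \eqref{eq:lbb} of the LBB condition shows that $\mathcal G_h^k\colon\Qhkeo\to(\Vhk(0))^\circ$ is bijective. Hence there is $q_h\in\Qhkeo$ with $\mathcal G_h^kq_h=\BA_h\bv_h-\BE_h^*\bff$ in $(\Vhk)^*$. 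Then $(\bv_h,q_h)$ solves \eqref{eq:primal0.1n}, and \eqref{eq:primal0.2n} holds because $\bv_h\in\Vhk(0)$.

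\emph{Pressure estimate.} By \eqref{eq:lbb},
\begin{align*}
\beta^{p'}\|q_h\|_{p'}^{p'}\le\|\BA_h\bv_h-\BE_h^*\bff\|_{(\Vhk)^*}^{p'}\lesssim\|\BA_h\bv_h\|_{(\Vhk)^*}^{p'}+\|\bff\|_{-1,p'}^{p'}.
\end{align*}
The decisive ingredient is the boundedness estimate \eqref{lem:ldg_stress.7.1}. Unlike \eqref{lem:ldg_stress.7.1a} for $p>2$, it contains no mesh-dependent constant $c(\Vhk)$. This is because the shift $\abs{\avg{\PiDGe\Dhke\bv_h}}$ is controlled in $L^p$ on $\Gamma_h$ (weighted by $h_\Gamma$) via the discrete trace inequality and the $L^p$-stability of $\PiDGe$, as shown in \eqref{lem:ldg_stress.7}. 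Thus $\|\BA_h\bv_h\|_{(\Vhk)^*}^{p'}\lesssim\|\bv_h\|_{\nabla,p,h}^p+\delta^p|\Omega|$. Inserting the velocity bound from Prop.~\ref{prop:stabPhn} yields \eqref{eq:apriQhn}$_2$ for all $p\in(1,\infty)$, with constants depending only on $k$, $\omega_0$, $p$ and the characteristics of $\SSS$.

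The main point to check is that \eqref{lem:ldg_stress.7.1} is genuinely uniform in $h$. This requires the trace estimate \eqref{eq:PiDGapproxmglobal1}, applied to the piecewise constant $\PiDGe\Dhke\bv_h$, to give
\begin{align*}
\int_{\Gamma_h}h_\Gamma|\avg{\PiDGe\Dhke\bv_h}|^p\,\textup{d}s\lesssim\|\PiDGe\Dhke\bv_h\|_{p,\Omega}^p,
\end{align*}
which holds with shape-regularity constants only. Everything else is a verbatim repetition of the argument for Prop.~\ref{prop:stabqh}, including the observation that the LBB constant $\beta$ is independent of $h$ (by \cite[Sec.~4.4]{JLMNR17} and \cite[Lem.~12.1]{EG21}).
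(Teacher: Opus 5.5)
Your proposal is correct and follows the paper's proof essentially verbatim. You take the solution from Prop.~\ref{prop:stabPhn}, obtain $q_h$ by the annihilator/LBB argument of Prop.~\ref{prop:stabqh}, and get the pressure bound by replacing \eqref{lem:ldg_stress.7.1a} with the $h$-uniform estimate \eqref{lem:ldg_stress.7.1}. You also correctly locate why the mesh-dependent constant $c(\Vhk)$ disappears for $p>2$: the new shift is controlled through the trace inequality \eqref{eq:PiDGapproxmglobal1} together with the stability of $\PiDGe$.
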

    \begin{proof}
      In view of Prop.~\ref{prop:stabPhn} the proof of the existence of a
      solution $(\bv_h, q_h)\in \Vhk \times \Qhkeo$ is identical with
      the one of Prop.~\ref{prop:stabqh}. Also the proof of
      \eqref{eq:apriQhn}$_2$ works as there, just using
      \eqref{lem:ldg_stress.7.1} instead of \eqref{lem:ldg_stress.7.1a}.  
    \end{proof}

From \eqref{eq:primal0.1n} and \eqref{eq:q1}, it follows that the error
equation, for every $\bz_h\in \Vhk$, takes the form
\begin{align}\label{eq:error_equation_primaln}
  \begin{aligned}
    &(\SSS(\Dhke \bv_h)_\Omega - \SSS(\BD \bv),\BD \BE_h
    \bz_h)_{\Omega} + \alpha\, \langle
    \SSS_{\widetilde\bbeta_h(\bv_h)}(h_\Gamma^{-1} \jump{\bv_h\otimes \bn}),
    \jump{\bz_h \otimes \bn}\rangle_{\Gamma_h}
    \\
    &\quad -\hskp{q_h-q}{\Divhke \bz_h}_\Omega = 0\,,
  \end{aligned}
\end{align}
where we 	also used that $\Divhke \bz_h =\divo \BE_h\bz_h$ for every
$\bz_h \in \Vhk$ (cf.~\eqref{eq:divo}).
	
\begin{thm}\label{thm:error_primaln}
  Let $(\bv,q)  \!\in\! \Vo \times \Qo$ be a solution of \eqref{eq:q1}
  and ${(\bv_h, q_h)\!\in\! \Vhk \times \Qhkeo}$ be a solution of
  \eqref{eq:primal0.1n}. If $\alpha >0$ is sufficiently large,
  we have that
  \begin{align}\label{eq:optn}
    \begin{aligned}
      &\|\BF(\Dhke \bv_h) - \BF(\BD \bv)\|^2_{2,\Omega}
      + m_{\varphi_{\abs{\avg{\PiDGe\Dhke \bv_h}}
        }
          ,h}(\bv_h)
        \\
      &\lesssim \inf_{\bw_h \in \Vhk(0)} \big(\|\BF(\Dhke \bw_h) -
      \BF(\BD \bv)\|^2_{2,\Omega} +
      m_{\varphi_{\abs{\avg{\PiDGe\Dhke \bv_h}}
        }
          ,h}(\bw_h)\big)
      \\
    &\quad 
    +\sum_{F\in \Gamma_h} \inf_{\BQ_F\in \mathbb{R}^{d\times d}_\sym} \norm{\BF(\BD\bv)- \BQ_F}^2_{2,S_F}\,,
    \end{aligned}
  \end{align}
  where the constant depends only on $k$, $\omega_0$, and the characteristics of $\SSS$.
\end{thm}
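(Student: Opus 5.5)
The plan is to repeat the proof of Thm.~\ref{thm:error_primal} almost verbatim, with the max-shift replaced by $\widetilde\bbeta_h(\bv_h)=\abs{\avg{\PiDGe\Dhke \bv_h}}$ and Lem.~\ref{lem:max-shift} replaced by Lem.~\ref{lem:shift}. Starting from the error equation \eqref{eq:error_equation_primaln}, I add and subtract $\Dhke\bz_h$ for $\bz_h\in\Vhk$, using $\Dhke\BE_h\bz_h=\BD\BE_h\bz_h$. This gives the splitting $0=\mathfrak{I}_1+\alpha\,\mathfrak{I}_2+\mathfrak{I}_3+\mathfrak{I}_4$ exactly as in \eqref{eq:error_primal.1}, now with $\SSS_{\widetilde\bbeta_h(\bv_h)}$ in $\mathfrak I_2$. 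I then choose $\bz_h=\bv_h-\bw_h$ with $\bw_h\in\Vhk(0)$ arbitrary. Since $\Divhke\bz_h=0$, \eqref{eq:divo} gives $\mathfrak{I}_4=0$; this is where pressure-robustness enters.

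The term $\mathfrak{I}_1$ is treated exactly as in \eqref{eq:error_primal.2}, which yields
\[
\mathfrak I_1\ge (c-\varepsilon)\|\BF(\Dhke\bv_h)-\BF(\BD\bv)\|_{2,\Omega}^2-c_\varepsilon\|\BF(\BD\bv)-\BF(\Dhke\bw_h)\|_{2,\Omega}^2 .
\]
For $\mathfrak I_2$ the shift $\widetilde\bbeta_h(\bv_h)$ is a fixed function on $\Gamma_h$ (facewise constant) rather than a real number. However, $\SSS_a(\BA):\BA\sim\varphi_a(\abs{\BA})$, $\abs{\SSS_a(\BA)}\sim\varphi_a'(\abs{\BA})$ and the $\varepsilon$-Young inequality hold pointwise, uniformly in $a\ge 0$, so the computation \eqref{eq:error_primal.3} goes through pointwise on each face. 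This gives
\[
\mathfrak I_2\ge (c-\varepsilon)\,m_{\varphi_{\widetilde\bbeta_h(\bv_h)},h}(\bv_h)-c_\varepsilon\, m_{\varphi_{\widetilde\bbeta_h(\bv_h)},h}(\bw_h).
\]

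For $\mathfrak{I}_3$ I apply Lem.~\ref{lem:shift} and obtain the bound
\[
\kappa\|\BF(\Dhke\bv_h)-\BF(\BD\bv)\|^2_{2,\Omega}+c_\kappa\,m_{\varphi_{\widetilde\bbeta_h(\bv_h)},h}(\bz_h)
\]
plus the two data-oscillation terms $\|\PiDGe\BF(\BD\bv)-\BF(\BD\bv)\|^2_{2,\Omega}$ and $\sum_F\|\mean{\BF(\BD\bv)}_{S_F}-\BF(\BD\bv)\|_{2,S_F}^2$. By the $\Delta_2$-condition of $\varphi_a$, uniform in $a$, we have $m(\bz_h)\lesssim m(\bv_h)+m(\bw_h)$. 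Collecting terms, I first take $\varepsilon,\kappa$ small and then $\alpha$ large so that the positive terms absorb $\kappa\|\cdot\|^2$ and $c_\kappa\, m(\bv_h)$. It remains to bound the oscillation terms by $\sum_F\inf_{\BQ_F}\|\BF(\BD\bv)-\BQ_F\|^2_{2,S_F}$.

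This last step is the main, though mild, obstacle. On $S_F$ the mean value is the $L^2$-best constant, so $\|\mean{\BF(\BD\bv)}_{S_F}-\BF(\BD\bv)\|_{2,S_F}\le\|\BQ_F-\BF(\BD\bv)\|_{2,S_F}$ for every constant $\BQ_F$. Moreover, since $\BF$ maps into symmetric tensors, the optimal constant is symmetric, so the infimum over $\mathbb R^{d\times d}_{\sym}$ is the same as over all constants. For the elementwise projection, every $K$ lies in some $S_F$ with $F\subseteq\partial K$, and $\PiDGe$ restricted to $K$ is the $L^2$-best constant on $K$. Hence $\|\PiDGe\BF(\BD\bv)-\BF(\BD\bv)\|_{2,K}^2\le\|\BQ_F-\BF(\BD\bv)\|_{2,K}^2\le\|\BQ_F-\BF(\BD\bv)\|_{2,S_F}^2$. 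Summing over $K$, and noting that each face patch is charged a bounded number of times by shape regularity, gives the claim. Finally, taking the infimum over $\bw_h\in\Vhk(0)$ yields \eqref{eq:optn}.
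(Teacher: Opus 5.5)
Your proposal is correct and follows the paper's proof. You use the same splitting into $\mathfrak{I}_1,\dots,\mathfrak{I}_4$ with $\bz_h=\bv_h-\bw_h$, Lem.~\ref{lem:shift} in place of Lem.~\ref{lem:max-shift} for $\mathfrak{I}_3$, absorption by small $\varepsilon,\kappa$ and then large $\alpha$, and control of the elementwise-projection oscillation by the face-patch oscillation. The only cosmetic difference is in that last step: you bound the elementwise term directly by the best-approximation property of the mean on $K$ (each face charged at most twice), whereas the paper inserts $\mean{\BF(\BD\bv)}_{S_F}$ and uses the triangle and Jensen inequalities, which amounts to the same estimate.
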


\begin{proof}
  We proceed exactly as in the proof of Thm.~\ref{thm:error_primal}, just
  using Lem.~\ref{lem:shift} instead of Lem.~\ref{lem:max-shift}. Thus, we
  obtain for any $\bw_h \in \Vhk(0)$
  \begin{align}\label{eq:error_primal.1n}
    \begin{aligned}
      0 &= (\SSS(\Dhke \bv_h) - \SSS(\BD \bv), \Dhke\bz_h)_{\Omega} +
      \alpha\, \langle \SSS_{\widetilde \bbeta_h(\bv_h)}(h_\Gamma^{-1}
      \jump{\bv_h\otimes \bn}), \jump{\bz_h \otimes
        \bn}\rangle_{\Gamma_h}
      \\
      &\quad+ (\SSS(\Dhke \bv_h) - \SSS(\BD \bv), \Dhke(\BE_h \bz_h -
      \bz_h))_{\Omega} -\hskp{q_h-q}{\Divhke \bz_h}_\Omega
      \\
      &\ge (c-\varepsilon)\, \|\BF(\Dhke \bv_h) - \BF(\BD
      \bv)\|^2_{2,\Omega} - c_\varepsilon \, \|\BF(\BD \bv) -
      \BF(\Dhke \bw_h)\|^2_{2,\Omega}
      \\
      &\quad +\alpha\,(c- \varepsilon )\, m_{\varphi_{\bbeta_h(\bv_h)},h}
      (\bv_h) - \alpha\, c_\varepsilon\,
      m_{\varphi_{\widetilde\bbeta_h(\bv_h)},h} (\bw_h)
      \\
      &\quad -\kappa \,\|\BF(\Dhke \bv_h) - \BF(\BD
      \bv)\|^2_{2,\Omega} + c_\kappa\,
      (m_{\varphi_{\widetilde\bbeta_h(\bv_h)},h}(\bv_h) +
      m_{\varphi_{\widetilde \bbeta_h(\bv_h)},h}(\bw_h))
      \\
      &\quad -c\,\|\PiDGe \BF(\BD\bv) - \BF(\BD \bv)\|^2_{2,\Omega} -c\sum
      _{F \in \Gamma_h} \| \mean{\BF(\BD \bv)}_{S_F}-\BF(\BD\bv)
      \|^2_{2,S_F} \,.
    \end{aligned}
  \end{align}
Now, we observe that the last term can be used to absorb the penultimate term;
to see this, estimate on a given element $K\in\triang$ and any fixed facet $F\subset \partial K$:
\begin{align*}
\int_K |\BF(\BD\bv)- \PiDGe \BF(\BD\bv)|^2\, \textup{d}x
&\lesssim  
\int_{K} |\BF(\BD\bv)- \mean{\BF(\BD\bv)}_{S_F}|^2\, \textup{d}x
+ 
\int_{K} \Big |\dashint_K \BF(\BD\bv) - \mean{\BF(\BD\bv)}_{S_F}\, \textup{d}y \Big|^2\, \textup{d}x
\\&\lesssim 
2\int_{K} |\BF(\BD\bv)- \mean{\BF(\BD\bv)}_{S_F}|^2\, \textup{d}x
\lesssim 2\int_{S_F} |\BF(\BD\bv)- \mean{\BF(\BD\bv)}_{S_F}|^2\, \textup{d}x\,.
\end{align*}
Moreover, we use that $\inf_{\BQ_F\in \mathbb{R}^{d\times d}_\sym}
\int_{S_F}\abs{\BF(\BD\bv)- \BQ_F}^2 \, \textup{d}x=
\int_{S_F}\abs{\BF(\BD\bv)- \mean{\BF(\BD\bv)}_{S_F}}^2 \,
\textup{d}x$. 
To conclude the proof, we choose first $\varepsilon,\kappa>0$ sufficiently small and
  then $\alpha>0$ sufficiently large, to arrive at the claimed
  pressure-robust estimate \eqref{eq:optn}. 
\end{proof}

\begin{cor}\label{cor:convn}
  Let $\alpha$ be sufficiently large. Then, we have 
  \begin{align*}
    \|\BF(\Dhke \bv_h) - \BF(\BD \bv)\|^2_{2,\Omega}
    + m_{\varphi_{\abs{\avg{\PiDGe\Dhke  \bv_h}}    },h}(\bv_h)\to 0\quad (h\to 0)\,.
  \end{align*}
\end{cor}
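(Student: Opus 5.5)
We start from the weak quasi-optimality estimate \eqref{eq:optn} of Thm.~\ref{thm:error_primaln} and choose $\bw_h=\PiDG\bv\in\Vhk(0)$. We then have to show that three terms vanish as $h\to0$: the best-approximation term $\|\BF(\Dhke\PiDG\bv)-\BF(\BD\bv)\|_{2,\Omega}^2$, the jump term $m_{\varphi_{\widetilde\bbeta_h(\bv_h)},h}(\PiDG\bv)$, and the oscillation term $\sum_{F}\|\BF(\BD\bv)-\mean{\BF(\BD\bv)}_{S_F}\|_{2,S_F}^2$.

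\emph{Oscillation term.} Since $\BF(\BD\bv)\in L^2(\Omega)$ and the patches $S_F$ have bounded overlap with diameter $\lesssim h$, this term is the error of a local patchwise mean-value approximation. We approximate $\BF(\BD\bv)$ in $L^2$ by a smooth $\BG$, bound the oscillation of $\BG$ by $h^2\|\nabla\BG\|_2^2$, and use stability of the patch mean for the difference. This gives convergence to zero.

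\emph{Best-approximation term.} Here $p\le2$ is no longer assumed, so we argue via \eqref{eq:hammera}. We use $\BD\bv=\Dhke\bv$ and
\[
|\BF(\BA)-\BF(\BB)|^2\sim\varphi_{|\BA|}(|\BA-\BB|).
\]
It therefore suffices to show $\rho_{\varphi_{|\BD\bv|}}(\Dhke(\bv-\PiDG\bv))\to0$. To get this, we bound $\Dhke(\bv-\PiDG\bv)$ by $\nabla_h(\bv-\PiDG\bv)$ plus $\Rhke(\bv-\PiDG\bv)$ and invoke the Orlicz approximation properties of $\PiDG$ from Sec.~\ref{sec:aux} (\eqref{eq:grad}, \eqref{eq:R}). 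We apply them either with $\psi=\varphi$ after a shift change via Lem.~\ref{lem:shift-change}, or by density: approximate $\bv$ in $W^{1,p}$ by smooth solenoidal fields, and use $W^{1,p}$-stability of $\PiDG$ together with continuity of $\BF$ from $L^p$ into $L^2$ (modular convergence). Since $\varphi\in\Delta_2$ uniformly, modular and norm convergence coincide, so this yields the claim.

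\emph{Jump term — main obstacle.} The shift $\widetilde\bbeta_h(\bv_h)=|\{\PiDGe\Dhke\bv_h\}|$ is not bounded uniformly in $L^\infty$. It is, however, piecewise constant on faces and controlled in $L^p$: by the trace inequality, $\int_{\Gamma_h}h_\Gamma\,\widetilde\bbeta_h^p\lesssim\|\Dhke\bv_h\|_p^p\lesssim1$ by the a priori estimate Prop.~\ref{prop:stabPhn}. Using $\jump{\bv\otimes\bn}=\bzero$, we write the term as $m_{\varphi_{\widetilde\bbeta_h},h}(\bv-\PiDG\bv)$. Remark~\ref{rem:phi_a} then gives pointwise
\[
\varphi_a(t)\lesssim(\delta+a)^{p-2}t^2+t^p,
\]
and we split into cases. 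For $p\le2$ we have $\varphi_a\le\varphi$, so the term is bounded by $m_{\varphi,h}(\bv-\PiDG\bv)\to0$ via \eqref{eq:mod}. For $p>2$, instead of using $\widetilde\bbeta_h$ directly we perform a shift change (Lem.~\ref{lem:shift-change}) from $\widetilde\bbeta_h$ on $F$ to $|\mean{\BD\bv}_{S_F}|$. The shift-change error is
\[
\varepsilon\, h_F^{d}\,\varphi_{|\mean{\BD\bv}_{S_F}|}\bigl(\bigl|\widetilde\bbeta_h-|\mean{\BD\bv}_{S_F}|\bigr|\bigr)\lesssim\varepsilon\bigl(\|\BF(\Dhke\bv_h)-\BF(\BD\bv)\|_{2,S_F}^2+\text{oscillation}\bigr),
\]
which we estimate exactly as in the proof of Lem.~\ref{lem:shift}. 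The first part can be absorbed into the left-hand side of \eqref{eq:optn}, with $\varepsilon$ fixed small relative to the hidden constant. What remains is $c_\varepsilon\, m$ with shift $|\mean{\BD\bv}_{S_F}|$, which depends only on $\bv$. We then show this tends to zero by the trace inequality and local approximation in $\rho_{\varphi_{|\BD\bv|}}$, again by the density argument. The delicate point is doing the absorption rigorously, i.e.\ re-entering the proof of Thm.~\ref{thm:error_primaln} rather than applying \eqref{eq:optn} as a black box. Combining the three limits gives the claimed convergence.
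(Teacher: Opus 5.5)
Your proposal is correct. For the oscillation term, the best-approximation term and the jump term when $p\le 2$, it essentially coincides with the paper. The paper bounds $\|\BF(\BD\bv)-\BF(\Dhke\PiDG\bv)\|_{2,\Omega}^2$ by $\|\BD\bv-\Dhke\PiDG\bv\|_{p,\Omega}^{p}$ for $p\le2$ and by $\|\BD\bv-\Dhke\PiDG\bv\|_{p,\Omega}^{2}$ for $p>2$, instead of passing through $\rho_{\varphi_{|\BD\bv|}}$ and a shift change, but both routes work.

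For the jump term when $p>2$ your route genuinely differs. The paper first inserts $\bw_h=\bzero$, $\BQ_F=\bzero$ into \eqref{eq:optn} to get the uniform bound \eqref{eq:apri1} on $\|\BF(\Dhke\bv_h)\|_{2,\Omega}^2$; Prop.~\ref{prop:stabPhn} would serve equally well. It then shifts directly from $\widetilde\bbeta_h(\bv_h)$ to the zero shift:
$m_{\varphi_{\widetilde\bbeta_h(\bv_h)},h}(\PiDG\bv-\bv)\lesssim c_\varepsilon\,\rho_{\varphi,\Omega}(\nabla_h(\PiDG\bv-\bv))+\varepsilon\,\|\BF(\Dhke\bv_h)\|_{2,\Omega}^2$.
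Hence the $\limsup$ is $\lesssim\varepsilon$, and $\varepsilon$ is arbitrary. Your own observation $\int_{\Gamma_h}h_\Gamma\,\widetilde\bbeta_h^p\lesssim 1$ is exactly what makes this short argument work, so the detour through the local shift $|\mean{\BD\bv}_{S_F}|$ is not needed for plain convergence.

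Your route is nevertheless valid. You shift to $|\mean{\BD\bv}_{S_F}|$ and bound the shift-change error facewise, via convexity and Jensen as in Lem.~\ref{lem:shift}, by $\varepsilon\,(\|\BF(\Dhke\bv_h)-\BF(\BD\bv)\|_{2,S_F}^2+\text{osc}_{S_F})$. You then absorb this and show that the remaining $\bv$-dependent jump modular vanishes. For that last step, one more shift change to $\varphi$ suffices: its error $\eta\,\rho_{\varphi,\Omega}(\BD\bv)$ is harmless, and \eqref{eq:mod} handles the rest. Your route needs no a priori bound on $\bv_h$ and is the mechanism of Cor.~\ref{cor:ratenn}, so it also underlies rates; the paper's is shorter for plain convergence.

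Finally, the absorption you flag as delicate is not. The absorbed quantity is finite and enters the right-hand side of \eqref{eq:optn} with a factor $C\varepsilon\le\frac12$. You can therefore use \eqref{eq:optn} as a black box and rearrange, exactly as the paper does in the proof of Cor.~\ref{cor:ratenn}.
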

\begin{proof}
  For $p \le 2$ we have that $\phi_{\widetilde\bbeta_h(\bv_h)}\le
  \phi$ (cf.~Rem.~\ref{rem:phi_a}). Thus, we choose $\bw_h = \PiDG
  \bv$ and $\BQ_F = \mean{\BF(\BD\bv)}_{S_F}$ in \eqref{eq:optn} and proceed as in the proof of
  \eqref{cor:conv} to get
  \begin{align*}
    \|\BF(\BD \bv) - \BF(\Dhke\PiDG \bv)\|^2_{2,\Omega}  \to 0\quad (h\to 0)\,,
    \\
    m_{\varphi,h}(\PiDG \bv)=m_{\varphi,h}(\bv-\PiDG \bv) \to 0\quad (h\to 0)\,.
  \end{align*}
  Moreover, Lem.~\ref{lem:conv1} implies
  \begin{align*}
    \sum_{F\in \Gamma_h} \inf_{\BQ_F\in \mathbb{R}^{d\times d}_\sym}
    \norm{\BF(\BD\bv)- \BQ_F}^2_{2,S_F} \lesssim \sum_{F\in \Gamma_h}
    \norm{\BF(\BD\bv)- \mean{\BF(\BD\bv)}}^2_{2,S_F} \to 0\quad (h\to 0)\,.
  \end{align*}
  These convergences together with \eqref{eq:optn} yields the claimed
  convergence under minimal regularity assumptions.

  For $p>2$ we choose $\bw_h \equiv \bzero $ and $\BQ_F\equiv \bzero $ in \eqref{eq:optn}. Thus,
  we have $\|\BF(\BD\bv)-\BF(\BD\bw_h)\|_{2,\Omega}^2=\|\BF(\BD\bv)\|_{2,\Omega}^2 <\infty$ in view of
  \eqref{eq:hammera}, $t^p+\delta^p \sim \phi(t) +\delta^p$ and 
  \eqref{eq:apri-cont}, as well as  
  $m_{\varphi_{\widetilde\bbeta_h(\bv_h)},h}(\bw_h)=
  \bzero$, and $\sum_{F\in \Gamma_h} \inf_{\BQ_F\in \mathbb{R}^{d\times d}_\sym}
    \norm{\BF(\BD\bv)- \BQ_F}^2_{2,S_F} = \sum_{F\in \Gamma_h} 
    \norm{\BF(\BD\bv)}^2_{2,S_F} <\infty$, due to the finite
    overlapping property of $S_F$ for $F \in \Gamma_h$.
  Consequently, the pressure-robust estimate \eqref{eq:optn} yields
  \begin{align}\label{eq:apri1}
    \begin{aligned}
      &\| \BF(\Dhke \bv_h) \|^2_{2,\Omega} + m_{\varphi_{\widetilde
          \bbeta _h(\bv_h)},h}(\bv_h)
      \\
      &\lesssim \|\BF(\Dhke \bv_h) - \BF(\BD \bv)\|^2_{2,\Omega} +
      m_{\varphi_{\widetilde \bbeta_h(\bv_h)},h}(\bv_h) +\| \BF(\BD
      \bv)\|^2_{2,\Omega}
      \\
      &\lesssim \|\BF (\BD\bv)\|_{2,\Omega}^2 <\infty\,.
    \end{aligned}
  \end{align}
  Now we choose $\bw_h = \PiDG \bv $ and $\BQ_F =
  \mean{\BF(\BD\bv)}_{S_F}$ in \eqref{eq:optn}.
  First, we observe that from \eqref{eq:hammera},  Rem.~\ref{rem:phi_a}, $p> 2$, that
  ${\BD \bv = \Dhke \bv}$ since $\Rhke \bv =\bzero$, ${\abs{\BD_h
    \bw} \le \abs{\nabla_h \bw}}$, it  follows 
  that
  \begin{align*}
    \|\BF(\BD \bv) - \BF(\Dhke\PiDG \bv)\|^2_{2,\Omega} &\lesssim
    \|\BD \bv - \Dhke\PiDG \bv\|_{p,\Omega}^{2}
    \\
    &\lesssim \|\nabla_h ( \bv - \PiDG \bv)\|_{p,\Omega}^{2} +
    \|\Rhke(\bv -\PiDG \bv)\|_{p,\Omega}^{2}
  \end{align*}
  with a constant depending only on the characteristics of
  $\SSS$. Since $\bv \in \Vo_\divo $ we get from the approximation
  properties of $\PiDG$ 
  \eqref{eq:grad}, \eqref{eq:R} with $\psi(t)=t^p$ that
  \begin{align}\label{cor:primal_convergence.2n}
    \|\BF(\BD \bv) - \BF(\Dhke\PiDG \bv)\|^2_{2,\Omega}  \to 0\quad (h\to 0)\,.
  \end{align}
  Next, we observe, using $\jump{\bv \otimes \bn}= \bzero$,
  $\PiDG\bv -\bv=\PiDG\bv -\bv -\PiDG (\PiDG\bv -\bv)$, a shift change
  in Lem.~\ref{lem:shift-change}, the approximation
  properties of $\PiDG$ in \eqref{eq:PiDGapproxmglobal}, the
  stability of $\PiDGe $ (cf.~\cite[(A.11)]{dkrt-ldg}), and
  Prop.~\ref{lem:hammer}, that for every $\vep >0$ there holds
\begin{align}
  \label{eq:conv1}
  \begin{aligned}
    m_{\varphi_{\widetilde \bbeta _h(\bv_h)},h}(\PiDG\bv -\bv)
    &\lesssim c_\vep\, m_{\varphi,h}(\PiDG\bv -\bv - \PiDG (\PiDG\bv
    -\bv))
    \\
      &\quad + \vep \int_{\Gamma_h} h_\Gamma \varphi(|\avg{\PiDGe\Dhke
        \bv_h}|)\, \textup{d}s
    \\
    &\lesssim c_\vep\, \rho_{\varphi,\Omega}(\nabla _h(\PiDG\bv -\bv))
    + \vep \, \rho_{\phi, \Omega} (\PiDGe\Dhke \bv_h)
    \\
    &\lesssim c_\vep\, \rho_{\varphi,\Omega}(\nabla _h(\PiDG\bv -\bv))
    + \vep \, \| \BF(\Dhke \bv_h) \|^2_{2,\Omega}\,. 
  \end{aligned}
\end{align}
Now, the approximation
  properties of $\PiDG$ in Lem.~\ref{lem:conv} for $\psi =\phi$ and \eqref{eq:apri1} yield for $h \to 0$
\begin{align}  \label{eq:conv1a}
  \lim_{h\to 0} m_{\varphi_{\widetilde \bbeta _h(\bv_h)},h}(\PiDG\bv
  -\bv) \lesssim \vep\,.
\end{align}
  Moreover, Lem.~\ref{lem:conv1} with $\BA= \BF(\BD\bv)$ implies
  \begin{align}  \label{eq:conv1b}
    \sum_{F\in \Gamma_h} \inf_{\BQ_F\in \mathbb{R}^{d\times d}_\sym}
    \norm{\BF(\BD\bv)- \BQ_F}^2_{2,S_F} \lesssim \sum_{F\in \Gamma_h}
    \norm{\BF(\BD\bv)- \mean{\BF(\BD\bv)}}^2_{2,S_F} \to 0\quad (h\to 0)\,.
  \end{align}
  The estimates \eqref{cor:primal_convergence.2n}--\eqref{eq:conv1b} yield the assertion also for $p>2$, since $\vep $ was
arbitrary. Thus, we proved the claimed
  convergence under minimal regularity assumptions also for $p>2$. 
\end{proof}

\begin{cor}\label{cor:ratenn}
  Assume that $\BF(\BD \bv) \in
  W^{1,2}(\Omega)^{d\times d}$. Then, for $\alpha >0$ sufficiently large, it holds that
  \begin{align*}
    \|\BF(\Dhke \bv_h) - \BF(\BD \bv)\|^2_{2,\Omega}
    + m_{\varphi_{\abs{\avg{\PiDGe\Dhke  \bv_h}}    },h}(\bv_h)
    \lesssim 
    \|h_{\mathcal T}\nabla \BF(\BD\bv)\|_{2,\Omega}^2 \lesssim h^2\,
    \|\nabla \BF(\BD\bv)\|_{2,\Omega}^2
  \end{align*}
  with a constant  depending only on 
  $k$, $\omega_0$, and the  characteristics of $\SSS$.
\end{cor}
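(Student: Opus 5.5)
The plan is to take the pressure-robust estimate \eqref{eq:optn} of Thm.~\ref{thm:error_primaln} and insert $\bw_h=\PiDG\bv\in\Vhk(0)$ and $\BQ_F=\mean{\BF(\BD\bv)}_{S_F}$. I will then show that each of the three resulting terms is bounded by $\|h_{\mathcal T}\nabla\BF(\BD\bv)\|_{2,\Omega}^2$, possibly plus a small multiple of the left-hand side, which is then absorbed. Since $h\le 1$ and $h_{\mathcal T}\le h$, the final bound $h^2\,\|\nabla\BF(\BD\bv)\|_{2,\Omega}^2$ is immediate from the first one. No case distinction $p\lessgtr 2$ should be needed, because all shifts will be handled by shift changes rather than by monotonicity in the shift.

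\emph{Oscillation terms.} The sum $\sum_{F}\|\BF(\BD\bv)-\mean{\BF(\BD\bv)}_{S_F}\|^2_{2,S_F}$ is handled by Poincaré's inequality on the patch $S_F$, which is connected and shape regular. This gives $h_F^2\|\nabla\BF(\BD\bv)\|^2_{2,S_F}$. Summing with the finite overlap of the patches $S_F$ yields $\|h_{\mathcal T}\nabla\BF(\BD\bv)\|_{2,\Omega}^2$.

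\emph{Consistency term.} For $\|\BF(\BD\bv)-\BF(\Dhke\PiDG\bv)\|^2_{2,\Omega}$ I will invoke Cor.~\ref{cor:app_VG} from the appendix, which was already used for \eqref{eq:h2}. I will use it in its local form: the approximation properties of $\PiDG$ in the shifted modular $\varphi_{|\mean{\BD\bv}_{S}|}$ on local patches, combined with \eqref{eq:hammera}, the bound on $\Rhke$, and the standard estimate
\[
\int_S\varphi_{|\mean{\BD\bv}_S|}(|\nabla\bv-\mean{\nabla\bv}_S|)\,\textup{d}x\lesssim\|\BF(\BD\bv)-\mean{\BF(\BD\bv)}_S\|^2_{2,S}.
\]
This again gives $\|h_{\mathcal T}\nabla\BF(\BD\bv)\|_{2,\Omega}^2$.

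\emph{Jump term (the main obstacle).} The term $m_{\varphi_{\widetilde\bbeta_h(\bv_h)},h}(\PiDG\bv)=m_{\varphi_{\widetilde\bbeta_h(\bv_h)},h}(\PiDG\bv-\bv)$ carries the discrete shift $\widetilde\bbeta_h(\bv_h)=|\avg{\PiDGe\Dhke\bv_h}|$, which is not controlled by the data a priori. The steps are as follows.
\begin{enumerate}
\item On each face $F$, apply Lem.~\ref{lem:shift-change} to pass to the face-wise constant shift $|\mean{\BD\bv}_{S_F}|$. This produces an error $\varepsilon\, h_F\int_F|\BF(\avg{\PiDGe\Dhke\bv_h})-\BF(\mean{\BD\bv}_{S_F})|^2\,\textup{d}s$.
\item Since the integrand of this error is constant on $F$, it equals an integral over $S_F$ up to $h_F$-scaling. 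By the arguments of Lem.~\ref{lem:shift} (\cite[Lem.~A.3]{bdr-phi-stokes}, Jensen, and adding and subtracting $\BF(\BD\bv)$), it is bounded by
\[
\varepsilon\big(\|\BF(\Dhke\bv_h)-\BF(\BD\bv)\|^2_{2,S_F}+\|\BF(\BD\bv)-\mean{\BF(\BD\bv)}_{S_F}\|^2_{2,S_F}\big).
\]
For $\varepsilon$ small the first part is absorbed into the left-hand side of \eqref{eq:optn}, since the implicit constant there is fixed; the second part is an oscillation term treated as above.
\item The remaining term $h_F\int_F\varphi_{|\mean{\BD\bv}_{S_F}|}(h_F^{-1}|\jump{(\PiDG\bv-\bv)\otimes\bn}|)\,\textup{d}s$ is handled with the trace inequality \eqref{eq:PiDGapproxmglobal}. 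Together with the shifted approximation property of $\PiDG$ (as in the consistency step), it is bounded by $\rho_{\varphi_{|\mean{\BD\bv}_{S_F}|},S_F}(\nabla_h(\bv-\PiDG\bv))\lesssim\|\BF(\BD\bv)-\mean{\BF(\BD\bv)}_{\widetilde S_F}\|^2_{2,\widetilde S_F}$ on a slightly enlarged patch $\widetilde S_F$, and Poincaré's inequality finishes this step.
\end{enumerate}
The delicate point is making the absorption in step~2 legitimate: the constants in the shift change must be uniform, and the traced discrete shift must be dominated by element integrals of $\BF(\Dhke\bv_h)$ exactly as in the proof of Lem.~\ref{lem:shift}.

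Collecting the three bounds and absorbing the $\varepsilon$-terms gives the claim with constants depending only on $k$, $\omega_0$, and the characteristics of $\SSS$.
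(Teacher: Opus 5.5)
Your proposal is correct and follows the paper's overall structure. You make the same choice $\bw_h=\PiDG\bv$, $\BQ_F=\mean{\BF(\BD\bv)}_{S_F}$ in \eqref{eq:optn}, use Cor.~\ref{cor:app_VG} for the consistency term and Poincar\'e's inequality on $S_F$ for the oscillation, and treat the jump term by a shift change followed by absorption.

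Where you differ is the intermediate shift in the jump term. The paper changes, pointwise on the faces, to the shift $\abs{\BD\bv}$ via the second inequality of Lem.~\ref{lem:shift-change}. It then invokes \cite[Cor.~4.11, Lem.~4.12]{kr-pnse-ldg-2} to bound $m_{\varphi_{\abs{\BD\bv}},h}(\bv-\PiDG\bv)$ and the face error $\int_{\Gamma_h}h_\Gamma\,\varphi_{\abs{\BD\bv}}(\abs{\BD\bv}-\abs{\avg{\PiDGe\Dhke\bv_h}})\,\textup{d}s$. The latter is bounded by $\|\BF(\BD\bv)-\BF(\PiDGe\Dhke\bv_h)\|^2_{2,\Omega}$, which the paper splits via $\BF(\PiDGe\BD\bv)$ into $\|h_{\mathcal T}\nabla\BF(\BD\bv)\|^2_{2,\Omega}$ plus the absorbable error.

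You instead freeze the shift at the face-wise constant $\abs{\mean{\BD\bv}_{S_F}}$. The shift-change error is then constant on $F$ and is handled by exactly the convexity/Jensen argument from the proof of Lem.~\ref{lem:shift}. The remaining modular carries a constant shift on each face, so the appendix estimates apply with $\psi=\varphi_{\abs{\mean{\BD\bv}_{S_F}}}$, which has uniform $\Delta_2$ constants, followed by a shifted Korn--Poincar\'e bound.

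Your route is more self-contained: it needs no traces of $\BD\bv$ on faces and no external face lemmas. The price is that you must argue face by face. Since the shift varies with $F$, you have to use the local estimate \eqref{eq:PiDGapproxmlocal} on both $K$ and $K'$ rather than the global \eqref{eq:PiDGapproxmglobal}. On the other hand, with $\PiDG$ acting elementwise as the appendix asserts, the patch $S_F$ itself already suffices, so your enlarged patch $\widetilde S_F$ is unnecessary. The paper's route is shorter on the page and keeps the shift $\abs{\BD\bv}$ already used in Prop.~\ref{prop:app_VR}.
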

\begin{proof}
  Again, we choose $\bw_h = \PiDG \bv$  and $\BQ_F =
  \mean{\BF(\BD\bv)}_{S_F}$ in the pressure-robust estimate
  \eqref{eq:optn}. In view of the approximation properties of $\PiDG$
  in \eqref{prop:app_V.0.3G} we have
    \begin{align*}
    \|\BF(\Dhke \bv_h) - \BF(\BD \bv)\|^2_{2,\Omega}\lesssim 
    \|h_{\mathcal T}\nabla \BF(\BD\bv)\|_{2,\Omega}^2 \lesssim h^2\, \|\nabla \BF(\BD\bv)\|_{2,\Omega}^2\,.
  \end{align*}
  To treat the jump
term we use $\jump{\bv \otimes \bn} =\bzero$, a shift change
  in Lem.~ \ref{lem:shift-change}, $\bv -\PiDG \bv =\bv -\PiDG \bv
+\PiDG (\bv -\PiDG \bv )$, \cite[Cor.~4.11,
Lem.~4.12]{kr-pnse-ldg-2}\footnote{Note that
  \cite[Lem.~4.12]{kr-pnse-ldg-2} is formulated with $\avg{\abs{\PiDGe
      \Dhke \bv_h}} $ instead of $\abs{\avg{\PiDGe \Dhke
      \bv_h}}$. However, the proofs works exactly in the same way.},
to obtain for every $\vep >0$ 
\begin{align}\label{eq:pi1}
    m_{\varphi_{\abs{\avg{\PiDGe\Dhke \bv_h}} },h}(\PiDG \bv)
    &\lesssim c_\vep \, m_{\varphi_{\abs{\BD \bv} },h}(\bv-\PiDG \bv)
    +\vep \, \int_{\Gamma_h} h_\Gamma\, \varphi_{\abs{\BD \bv} }\big(\abs
    {\BD\bv}-\abs{\avg{\PiDGe \Dhke \bv_h}}\big)\, \textup{d}s \notag 
    \\
    &\lesssim c_\vep \, \|h_{\mathcal T}\nabla
    \BF(\BD\bv)\|_{2,\Omega}^2+ \vep\, \|\BF(\BD\bv) - \BF(\PiDGe
    \Dhke \bv_h)\|_{2,\Omega}^2\,.
\end{align}
To treat the last term we add and subtract $\BF(\PiDGe\BD\bv)$, use
\cite[Lem.~A.3]{bdr-phi-stokes}, the Poincar\'e inequality on each $K
\in \mathcal T_h$, Prop.~\ref{lem:hammer}, the
  stability of $\PiDGe $ (cf.~\cite[(A.11)]{dkrt-ldg}), and a shift
  cange in Lem.~\ref{lem:shift-change} to arrive at
  \begin{align}
    \label{eq:pi0}
    \begin{aligned}
      \|\BF(\BD\bv) - \BF(\PiDGe \Dhke \bv_h)\|_{2,\Omega}^2 &\lesssim
      \|\BF(\BD\bv) - \BF(\PiDGe \BD \bv)\|_{2,\Omega}^2+
      \|\BF(\PiDGe\BD\bv) - \BF(\PiDGe \Dhke \bv_h)\|_{2,\Omega}^2
      \\
      &\lesssim \|h_{\mathcal T}\nabla \BF(\BD\bv)\|_{2,\Omega}^2 +
      \rho_{\phi_{\abs{\PiDGe\BD\bv}},\Omega}(\BD\bv -\Dhke\bv_h)
      \\
      &\lesssim \|h_{\mathcal T}\nabla \BF(\BD\bv)\|_{2,\Omega}^2 +
      \|\BF(\Dhke \bv_h) - \BF(\BD \bv)\|^2_{2,\Omega}\,.
    \end{aligned}
  \end{align}
Inserting \eqref{eq:pi0} into \eqref{eq:pi1}, choosing $\vep$ small enough the last term
is absorbed in the left-hand side of \eqref{eq:optn}. Using the
Poincar\'e inequality on each $S_F$ for $F \in \Gamma_h$ yields 
\begin{align*}
  {\sum _{F \in \Gamma_h} \|
      \mean{\BF(\BD \bv)}_{S_F}-\BF(\BD\bv) \|^2_{2,S_F}} &\lesssim 
  \|h_{\mathcal T}\nabla \BF(\BD\bv)\|_{2,\Omega}^2 \lesssim h^2\,
  \|\nabla \BF(\BD\bv)\|_{2,\Omega}^2\,. 
\end{align*}
Using the above estimates we proved the assertion. 
\end{proof}

\appendix
\section{Local $L^2$-projection}
\label{sec:aux}

In this appendix we prove stability and approximation properties of the
(local) $L^2$-projection into $V_h^k(0)$, $k \in \setN_0$, i.e., $\PiDG:L^1(\Omega)^d\to
V_h^k(0)$ is defined for every $\bv \in \smash{L^1(\Omega)^d}$ and $\bz_h
\in V_h^k(0)$ via
\begin{align*}
  \bighskp{\PiDG \bu}{\bz_h}_\Omega=\hskp{\bu}{\bz_h}_\Omega\,.
\end{align*}
All results are proven similarly to the corresponding results
for the $L^2$-projection into $V_h^k$, $k \in \setN_0$, in
\cite[App.~A.1]{dkrt-ldg} (cf.~\cite[App.~A]{kr-orlicz-ldg}). 

The projection is completely local and we could define $(\PiDG
\bg)|_K$ for $K \in \mathcal{T}_h$ via
\begin{align*}
  \hskp{\PiDG \bg}{\bz_h}_K =\hskp{\bg}{\bz_h}_K \qquad \forall\,
  \bz_h \in \Vhk(0)(K)\,.
\end{align*}
Since $\Vhk(0) \subset \Vhk$ we can use the results proved in
\cite[App.~A.1]{dkrt-ldg} (cf.~\cite[App.~A]{kr-orlicz-ldg}). In
particular, it is shown there that for $K \in \mathcal{T}_h$ and $\bg \in
L^1(\Omega)^d$ we have
\begin{align}\label{eq:infty}
  \norm{\PiDG \bg}_{L^\infty(K)} &\lesssim \dashint_{K}
  \abs{\bg}\,\textup{d}x\,.
\end{align}
This implies by Jensen's inequality that for every N-function~$\psi
\in \Delta_2$
we have for $K \in \mathcal{T}_h$ and $\bg \in
L^\psi(\Omega)^d$ that 
\begin{align}
  \label{eq:PiDGLpsistablelocal}
  \dashint_K \psi(\abs{\PiDG \bg})\,dx &\lesssim  \dashint_K \psi(\abs{\bg})\,dx
\end{align}
with a constant depending only on $k$, $\Delta _2(\psi)$ and
$\omega_0$.

To prove stability and approximation property of $\PiDG$ we introduce
the following subspaces of $L^1(\Omega)^d$ for $\ell \in \setN$
\begin{align*}
  W^{\ell,\psi}_\divo(\Omega)
  &\coloneqq \{\bw \in W^{\ell,\psi}(\Omega)^d \mid \divo
    \bw =0\}\,,
  \\
\WDGd
  &\coloneqq \{\bw_h \in W^{\ell,\psi}(\mathcal T_h)^d \mid \divo\!_h
    \bw_h =0\}\,.
\end{align*}

\begin{lem}
	\label{lem:stabpi}
	Let $\psi$ be an N-function
        satisfying the $\Delta_2$-condition.  Then, for every
        $\bw_h \in \WDGd$, $K \in \mathcal{T}_h$, and
        $0\leq j \leq \ell \leq k+1$, we have that
	\begin{align}
          \label{eq:PiDGapproxpsi}
          \dashint_K \psi\big(h_K^j \vert\nabla^j_h(\bw_h - \PiDG
          \bw_h)\vert\big)\,\textup{d}x
          &\lesssim \dashint_K \psi\big(h_K^\ell \vert \nabla^\ell
            \bw_h\vert\big)\,\textup{d}x\,,
          \\
          \dashint_K \psi\big(h_K^j \vert\nabla^j_h\PiDG
          \bw_h\vert\big)\,\textup{d}x
          &\lesssim \dashint_K \psi\big(h_K^\ell \vert \nabla^\ell
            \bw_h\vert\big)\,\textup{d}x \label{eq:PiDGstabpsi}
	\end{align}
	with  constants depending only on $\ell, k\in
        \mathbb{N}_0$, $\Delta_2(\psi)$, and $ \omega_0>0$.
\end{lem}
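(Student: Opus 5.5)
The plan is to follow the Bramble--Hilbert argument for the ordinary local $L^2$-projection in \cite[App.~A.1]{dkrt-ldg}, using the locality of $\PiDG$ noted above. The one new point is that the polynomial reproduced by $\PiDG$ must lie in the local space $\Vhk(0)(K)$, i.e.\ it must be divergence free. Fix $K\in\mathcal T_h$ and $\bw_h\in\WDGd$. I would take $\bp_K\in\mathcal P_{\ell-1}(K)^d$ to be the averaged Taylor polynomial of $\bw_h|_K$ of degree $\ell-1\le k$, averaged over a ball of radius $\sim\rho_K$.

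Two properties of $\bp_K$ are needed.
\begin{itemize}
\item Averaged Taylor polynomials commute with differentiation: $\divo \bp_K$ is the averaged Taylor polynomial of degree $\ell-2$ of $\divo_h\bw_h|_K=0$. Hence $\divo\bp_K=0$, so $\bp_K$ belongs to the local divergence-free space, and uniqueness of the projection gives $\PiDG\bp_K=\bp_K$ on $K$.
\item The Orlicz version of the Bramble--Hilbert lemma (as used in \cite{dkrt-ldg,kr-orlicz-ldg}, valid for $\psi\in\Delta_2$ on shape-regular simplices) gives, for $0\le j\le\ell$,
\[
\dashint_K \psi\big(h_K^j|\nabla^j(\bw_h-\bp_K)|\big)\,\textup{d}x\lesssim \dashint_K\psi\big(h_K^\ell|\nabla^\ell \bw_h|\big)\,\textup{d}x .
\]
\end{itemize}

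For \eqref{eq:PiDGapproxpsi}, write $\bw_h-\PiDG\bw_h=(\bw_h-\bp_K)-\PiDG(\bw_h-\bp_K)$ on $K$. The first term is handled by the Bramble--Hilbert estimate. For the second, $\PiDG(\bw_h-\bp_K)|_K$ is a polynomial of degree $\le k$, so the inverse estimate and \eqref{eq:infty} give
\[
\|h_K^j\nabla^j\PiDG(\bw_h-\bp_K)\|_{\infty,K}\lesssim \dashint_K|\bw_h-\bp_K|\,\textup{d}x .
\]
Applying $\psi$, then the $\Delta_2$-condition and Jensen's inequality, bounds its $\psi$-mean by $\dashint_K\psi(|\bw_h-\bp_K|)\,\textup{d}x$. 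This is the $j=0$ case of Bramble--Hilbert, hence bounded by $\dashint_K\psi(h_K^\ell|\nabla^\ell\bw_h|)\,\textup{d}x$. Convexity and $\Delta_2$ then combine the two pieces.

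For \eqref{eq:PiDGstabpsi}, write $\nabla^j\PiDG\bw_h=\nabla^j\PiDG(\bw_h-\bp_K)+\nabla^j\bp_K$. The first term is estimated exactly as above. For $j\ge\ell$ the second term vanishes because $\deg\bp_K\le\ell-1$. For $j<\ell$ I would instead choose $\bp_K$ of degree $j-1$, which is again divergence free by the same commutation. This bounds the $\psi$-mean of $h_K^j|\nabla^j\PiDG\bw_h|$ by that of $h_K^j|\nabla^j\bw_h|$.

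The main obstacle is twofold.
\begin{itemize}
\item Divergence-freeness of the averaged Taylor polynomial must be justified carefully for $\bw_h\in W^{\ell,\psi}$, via density of smooth functions together with the commutation identity.
\item In the stability estimate with $j<\ell$, one cannot reach $\nabla^\ell$ on the right-hand side from $\nabla^j$ alone. I expect the intended use is $j\ge\ell$, or $j=\ell$ after the above reduction. If the case $j<\ell$ is genuinely needed as stated, I would first check whether the paper only applies it with $j\ge\ell$ before seeking a further argument.
\end{itemize}
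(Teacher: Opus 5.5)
Your argument is the paper's. The averaged Taylor polynomial of degree $\ell-1$ is divergence free because averaging commutes with derivatives. The commutation result the paper cites (Brenner--Scott, Prop.~4.1.17) already holds for $W^{1,1}$ functions, so your first ``obstacle'' needs no separate density argument. The polynomial is reproduced by the projection, the projected remainder is handled by the inverse estimate, \eqref{eq:infty} and Jensen, and the rest is the Orlicz Bramble--Hilbert estimate. This gives \eqref{eq:PiDGapproxpsi} for all $0\le j\le\ell\le k+1$, and \eqref{eq:PiDGstabpsi} for $j=\ell$, exactly as in the paper.

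Your second concern is correct, and you should not look for a further argument: \eqref{eq:PiDGstabpsi} is false for $j<\ell$. Take $\bw_h=x_2^{\,j}\mathbf e_1$. It is divergence free and of degree $j\le\ell-1\le k$, hence reproduced by the elementwise projection used in the proof. The right-hand side vanishes because $\nabla^\ell\bw_h=\mathbf 0$, while the left-hand side equals $\psi(h_K^j|\nabla^j\bw_h|)>0$; the simplest instance is $j=0$, $\ell=1$, $\bw_h\equiv\mathbf e_1$. The paper's one-line ``triangle inequality'' step only yields $\dashint_K\psi(h_K^j|\nabla_h^j\PiDG\bw_h|)\,\textup{d}x\lesssim\dashint_K\psi(h_K^j|\nabla^j\bw_h|)\,\textup{d}x$. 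That is what you obtained, i.e.\ the case $\ell=j$, and it is the only case used later, in \eqref{eq:PiDGLpsistable} and \eqref{eq:PiDGLW1psistable}. The stability estimate should simply be stated with $\ell=j$.

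There is also a caveat you take over from the paper: both arguments assume that $\PiDG$ acts elementwise and reproduces every divergence-free polynomial on $K$. For $\Vhk(0)=\ker\Divhke$ this is not justified, since $\Divhke$ contains the lifting $\textup{tr}\,\Rhke$ of the normal jumps and so couples neighbouring elements. Take $k=1$:
\begin{itemize}
\item $\Divhke\bw_h|_K=0$ is the single condition $\sum_{F\subset\partial K\cap\Gamma_h^{i}}\int_F\{\bw_h\}\cdot\bn_K\,\textup{d}s=0$.
\item A field supported on one element $K$ lies in $\Vhk(0)$ only if its flux through every interior face of $K$ vanishes.
\item Hence the fields in $\Vhk(0)$ supported on single elements span a subspace of codimension $2\,\textup{card}(\Gamma_h^{i})$ in $\Vhk$.
\item But $\Vhk(0)$ itself has codimension at most $\textup{card}(\mathcal T_h)$, which is smaller than $2\,\textup{card}(\Gamma_h^{i})$ as soon as $\mathcal T_h$ contains an interior element.
\end{itemize}
So $\Vhk(0)$ does not split elementwise, and its $L^2$-projection is not local. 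What your proof (and the paper's) actually establishes is the lemma for the elementwise $L^2$-projection onto divergence-free polynomials, not for the projection onto $\Vhk(0)$ as defined.
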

\begin{proof}
Since $\PiDG \bp = \bp$ for all $\bp \in \Vhk(0)$, we have for
every $\bw_h \in \WDGd $
\begin{align*}
  &\dashint_K \psi(h_K^j \abs{\nabla^j_h(\bw_h - \PiDG
      \bw_h)})\,\textup{d}x \quad
  \\
  &\lesssim \inf_{\bp \in \Vhk(0)(K)} \bigg( \dashint_K
  \psi(h_K^j \abs{\nabla^j_h(\bw_h - \bp)})\,\textup{d}x + \dashint_K \psi(h_K^j
  \abs{\nabla^j_h\PiDG(\bw_h - \bp)})\,\textup{d}x \bigg)\,.
\end{align*}
To estimate the last term we use 
$  \norm{\bp}_{\infty,K}\sim \mean{\abs{\bp}}_K $,
an inverse estimate for polynomials, Jensen's inequality
and the $L^\psi$-stability in~\eqref{eq:PiDGLpsistablelocal} to get
\begin{align*}
  \dashint_K \psi(h_K^j \abs{\nabla^j_h\PiDG(\bw_h - \bp)})\,\textup{d}x
  &\lesssim \dashint_K \psi( \abs{\bw_h - \bp})\,\textup{d}x
\end{align*}
with a constant depending only on $\ell, k\in \mathbb{N}_0$,
$\Delta_2(\psi)$, and $ \omega_0$.  Now we choose $\bp$ as the
averaged Taylor polynomial $\BQ^\ell \bw_h \in \Vhke(K)$
of~$\bw_h$. Since $\BQ^\ell$ commutes with derivatives
(cf.~\cite[Prop.~4.1.17]{BS08}) we have that
$\PiDG \BQ^\ell \bw_h =\BQ^\ell \bw_h$. Thus,
$\BQ^\ell\bw_h \in \Vhk(0)(K)$ for
$\bw_h \in \WDGd $, and we get by the
classical Sobolev--\Poincare{} estimates for N-functions
(cf.~\cite[Cor.~3.3]{dr-interpol}) the Orlicz approximability
\eqref{eq:PiDGapproxpsi}. Using the triangle inequality we also obtain
the Orlicz stability \eqref{eq:PiDGstabpsi}. 
\end{proof}
\begin{cor}\label{cor:stabpi}
  Let $\psi$ be an N-function satisfying the
  $\Delta_2$-condition. Then, for every $k\in \setN_0$,
  $\bu \in L^{\psi}(\Omega)^d$ and
  $\bw_h \in \WDGd$, we have that
  \begin{align}
    \label{eq:PiDGapprox0}
    \rho_{\psi,\Omega}\big(\bu - \PiDG \bu\big)
    &\lesssim \rho_{\psi,\Omega}(\bu)\,,
    \\
    \label{eq:PiDGapprox1a}
    \rho_{\psi,\Omega}\big(\bw_h - \PiDG \bw_h\big)
    &\lesssim  \rho_{\psi,\Omega}(h_{\mathcal T}\, {\nabla_h \bw_h})\,,
    \\
    \label{eq:PiDGapprox2}
    \rho_{\psi,\Omega}\big(\nabla_h \bw_h - \nabla_h \PiDG \bw_h\big)
    &\lesssim    \rho_{\psi,\Omega}({\nabla_h \bw_h})
  \end{align}
  with constants  depending only on $k\in\mathbb{N}_0$,
  $\Delta_2(\psi)$, and $\omega_0$. In particular, this implies for
  every $k\in \setN_0$, $\bu \in L^{\psi}(\Omega)^d$ and
  $\bw_h \in \WDGd$ that
  \begin{align}
    \label{eq:PiDGLpsistable}
    \rho_{\psi,\Omega}\big(\PiDG \bu\big)
    &\lesssim     \rho_{\psi,\Omega}(\bu)\,,
    \\
    \label{eq:PiDGLW1psistable}
    \rho_{\psi,\Omega}\big(\nabla_h \PiDG \bw_h\big)
    &\lesssim \rho_{\psi,\Omega}(\nabla_h \bw_h)
  \end{align} 
  with constants  depending only on $k\in\mathbb{N}_0$,
  $\Delta_2(\psi)$, and $\omega_0$. For every $k \in \setN$ and $\bw_h \in W^{2,\psi}_\divo(\mathcal T_h)^d$, we
  have that
  \begin{align}
    \label{eq:PiDGapprox1}
    \rho_{\psi,\Omega}(\nabla_h \bw_h - \nabla_h \PiDG
    \bw_h)
    &\lesssim \rho_{\psi,\Omega}({h_{\mathcal T}\nabla^2_h \bw_h})
  \end{align}
  with a constant depending only on $k\in\mathbb{N}_0$, 
  $\Delta_2(\psi)$, and $\omega_0$.
\end{cor}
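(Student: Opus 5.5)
The plan is to obtain all six estimates by summing the local bounds of Lem.~\ref{lem:stabpi} and \eqref{eq:PiDGLpsistablelocal} over $K\in\mathcal T_h$, after multiplying each by $\abs{K}$. The projection $\PiDG$ acts elementwise, and $\rho_{\psi,\Omega}=\sum_K \abs{K}\dashint_K\psi(\cdot)\,\textup{d}x$, so every global estimate reduces to the corresponding local one.

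\emph{Stability and the zeroth-order bounds.} \eqref{eq:PiDGLpsistable} follows directly from \eqref{eq:PiDGLpsistablelocal} summed over $K$. For \eqref{eq:PiDGapprox0} we use $\psi(\abs{\bu-\PiDG\bu})\le \tfrac12\psi(2\abs{\bu})+\tfrac12\psi(2\abs{\PiDG\bu})$ together with the $\Delta_2$-condition and \eqref{eq:PiDGLpsistable}. For \eqref{eq:PiDGapprox1a} we apply \eqref{eq:PiDGapproxpsi} with $j=0$, $\ell=1$, which is admissible since $k+1\ge1$. On each $K$ this gives $\dashint_K\psi(\abs{\bw_h-\PiDG\bw_h})\lesssim\dashint_K\psi(h_K\abs{\nabla\bw_h})$, and since $h_K=h_{\mathcal T}$ on $K$ the sum over $K$ is the claim.

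\emph{Gradient bounds.} \eqref{eq:PiDGapprox2} is \eqref{eq:PiDGapproxpsi} with $j=\ell=1$, after noting that $h_K^1$ appears as a common factor on both sides. To remove it, we apply the lemma to the rescaled N-function $\psi_K(t)\coloneqq\psi(t/h_K)$, which has the same $\Delta_2$-constant, so the constant is unaffected. Equivalently, the proof of Lem.~\ref{lem:stabpi} is homogeneous under this scaling. Then \eqref{eq:PiDGLW1psistable} follows from \eqref{eq:PiDGapprox2} and the triangle inequality with $\Delta_2$. Finally, \eqref{eq:PiDGapprox1} is \eqref{eq:PiDGapproxpsi} with $j=1$, $\ell=2$, which needs $2\le k+1$, i.e.\ $k\in\setN$. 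Using the same rescaling by $h_K^{-1}$, the left side becomes $\psi(\abs{\nabla_h(\bw_h-\PiDG\bw_h)})$ and the right side becomes $\psi(h_K\abs{\nabla^2\bw_h})$; summing over $K$ gives the claim.

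\emph{Main obstacle.} The only real subtlety is removing the powers $h_K^j$ on both sides without losing uniformity in the constants. As noted, this works because $\Delta_2(\psi(\cdot/\lambda))=\Delta_2(\psi)$ for every $\lambda>0$, and the constants in Lem.~\ref{lem:stabpi} depend only on $\Delta_2(\psi)$, $k$, $\ell$ and $\omega_0$. A second point to check is the applicability of the lemma: \eqref{eq:PiDGapprox1a}, \eqref{eq:PiDGapprox2} and \eqref{eq:PiDGapprox1} require $\bw_h\in\WDGd$, i.e.\ elementwise divergence-free, so that the averaged Taylor polynomial lies in $\Vhk(0)(K)$. This is guaranteed by the hypotheses, including $W^{2,\psi}_\divo(\mathcal T_h)$ for the last estimate. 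For a general $\bu$, only the stability bounds \eqref{eq:PiDGapprox0} and \eqref{eq:PiDGLpsistable} are claimed, and these use nothing beyond \eqref{eq:PiDGLpsistablelocal}.
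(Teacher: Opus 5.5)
Your reduction is the one the paper intends: it gives no separate proof, and the corollary is meant to be the element-wise sum of \eqref{eq:PiDGLpsistablelocal} and Lemma~\ref{lem:stabpi}. Removing the factors $h_K^j$ via $\psi(\cdot/h_K)$, which has the same $\Delta_2$-constant, is also correct.

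The gap is the premise you explicitly vouch for: that $\PiDG$ acts elementwise, and that for $\bw_h\in\WDGd$ the elementwise averaged Taylor polynomial lies in $\Vhk(0)$. Membership in $\Vhk(0)$ means $\Divhke\bv_h=0$. The operator $\Divhke=\textup{div}_h-\textup{tr}\,\Rhke$ contains the normal jumps $\llbracket\bv_h\cdot\bn\rrbracket$ on all faces, including those on $\partial\Omega$. So $\Vhk(0)$ is a globally coupled subspace of $\Vhk$, not a product of element spaces. Consequently the projection \eqref{eq:PiDG} is not local, and an elementwise solenoidal piecewise polynomial need not lie in $\Vhk(0)$. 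You inherit this from the appendix (``the projection is completely local''), but it is a real error.

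It already refutes \eqref{eq:PiDGapprox1a}. For $k\in\setN$ take $\bw_h\equiv\mathbf{a}\in\mathbb{R}^d\setminus\{\mathbf{0}\}$; then $\bw_h\in\WDGd$ and $\nabla_h\bw_h=\mathbf{0}$. Since $\textup{div}_h\mathbf{a}=0$ and the normal jumps of $\mathbf{a}$ vanish on interior faces, testing with $\chi_K\in\Qhke$ for a boundary element $K$ gives
\begin{align*}
  (\Divhke\mathbf{a},\chi_K)_\Omega=-\int_{\partial K\cap\partial\Omega}\mathbf{a}\cdot\bn\,\textup{d}s\,.
\end{align*}
This is nonzero, e.g., for $\Omega=(0,1)^d$, $\mathbf{a}=\mathbf{e}_1$ and $K$ an element with a face on $\{x_1=1\}$. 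Hence $\mathbf{a}\notin\Vhk(0)$ and so $\PiDG\mathbf{a}\neq\mathbf{a}$. The left-hand side of \eqref{eq:PiDGapprox1a} is therefore positive while the right-hand side vanishes.

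The same example refutes the case $j=0$, $\ell=1$ of Lemma~\ref{lem:stabpi}. The local bound \eqref{eq:infty}, which underlies your proofs of \eqref{eq:PiDGapprox0} and \eqref{eq:PiDGLpsistable}, is likewise only justified for a local projection.

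Your argument would go through for the elementwise $L^2$-projection onto $\{\bp\in\mathcal{P}_k(K)^d\mid\textup{div}\,\bp=0\}$, $K\in\mathcal{T}_h$. However, that operator does not map into $\Vhk(0)$, so it cannot serve as the competitor $\bw_h$ in \eqref{eq:opt}. For the operator \eqref{eq:PiDG} itself, such bounds can at best be expected for a restricted class like $\Vo_{\divo}$. They would need a genuinely global argument (e.g.\ via a Fortin-type operator into $\Vhk(0)$) rather than an element-by-element one.
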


From these stability and approximability properties of $\PiDG$ we can
derive the following estimates for our error measure in
Thm.~\ref{thm:error_primal}. We start with the volume term and the
local symmetric gradient.
\begin{prop}\label{prop:app_V}
  Let $\SSS$ satisfy Ass.~\ref{assum:extra_stress} with
  $p\in (1,\infty)$ and $\delta\in [0,\infty)$, and let $k\in
  \mathbb{N}$. Then, for every ${\bw_h\in \WDGd}$
  and $K \in \mathcal{T}_h$, it~holds
  \begin{align}
    \int_K \bigabs{\BF (\BD_h \bw_h) - \BF \big(\BD_h \PiDG \bw_h\big)}^2
    \,\mathrm{d}x
    &\lesssim  \int_K \bigabs{\BF(\BD_h \bw_h) -
      \BF\big(\mean{\BD_h \bw_h}_K\big)}^2
      \,\mathrm{d}x\,.\label{prop:app_V.0.1} 
  \end{align}
  with a constant depending only on $k$, $\omega_0$, and the
  characteristics~of~$\SSS$.  In addition, for every ${\bw_h \in
    W^{1,p}_\divo(\mathcal{T}_h)}$ with 
  ${\BF(\BD_h\bw_h)\in W^{1,2}(\mathcal{T}_h)^{d\times d}}$ and $K \in \mathcal{T}_h$, it holds
  \begin{align}
    \int_K \bigabs{\BF (\BD_h \bw_h) - \smash{\BF \big(\BD_h \PiDG \bw_h\big)}}^2
    \,\mathrm{d}x
    &\lesssim h_K^2\,\int_K \bigabs{\nabla_h\BF(\BD_h \bw_h)}^2
      \,\mathrm{d}x\,,\label{prop:app_V.0.2}
    \\
    \bignorm{\BF (\BD_h \bw_h) - \BF \big(\BD_h \PiDG
    \bw_h\big)}^2_{2,\Omega}
    &\lesssim \norm{h_{\mathcal T}\nabla_h\BF(\BD_h \bw_h)}^2_{2,\Omega} \lesssim h^2\, \norm{\nabla_h\BF(\BD_h \bw_h)}^2_{2,\Omega}\label{prop:app_V.0.3}
  \end{align}
  with  constants  depending only on  $k$, $\omega_0$, and the
  characteristics of $\SSS$.
\end{prop}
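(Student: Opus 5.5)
The estimate \eqref{prop:app_V.0.1} is local, so I would fix $K\in\mathcal{T}_h$ and write $\BA\coloneqq\mean{\BD_h\bw_h}_K\in\mathbb{R}^{d\times d}_{\textup{sym}}$. The plan is to pass to the shifted modular $\rho_{\varphi_{|\BA|},K}$, apply the Orlicz stability of $\PiDG$ from Lemma~\ref{lem:stabpi} to a suitably corrected function, and then return to $\BF$.

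The key point is that the correction must stay in the admissible class. Choose a linear field $\bp$ on $K$ with $\nabla\bp=\BA$. Since $\mathrm{tr}\,\BA=\mean{\divo_h\bw_h}_K=0$, we have $\divo\bp=0$ and $\bp\in\Vhk(0)(K)$ for $k\ge1$ (its local divergence vanishes and it is polynomial of degree $1\le k$). Hence $\PiDG\bp=\bp$ on $K$.

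1. Split with the triangle inequality for $\BF$ and Prop.~\ref{lem:hammer}:
\begin{align*}
  \int_K\abs{\BF(\BD_h\bw_h)-\BF(\BD_h\PiDG\bw_h)}^2\,\mathrm{d}x
  \lesssim \int_K\abs{\BF(\BD_h\bw_h)-\BF(\BA)}^2\,\mathrm{d}x
  +\int_K\varphi_{|\BA|}\big(\abs{\BD_h\PiDG\bw_h-\BA}\big)\,\mathrm{d}x .
\end{align*}
2. Rewrite the last integrand using $\BD_h\PiDG\bw_h-\BA=\BD_h\PiDG(\bw_h-\bp)$, which holds because $\PiDG\bp=\bp$ and $\BD\bp=\BA$.
3. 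Bound this term by $\dashint_K\varphi_{|\BA|}(|\nabla_h\PiDG(\bw_h-\bp)|)$. Apply \eqref{eq:PiDGstabpsi} with $\psi=\varphi_{|\BA|}$ and $j=\ell=1$; its $\Delta_2$-constant is uniform in the shift by Rem.~\ref{rem:phi_a}, and $\bw_h-\bp\in\WDGd$. This gives $\lesssim\int_K\varphi_{|\BA|}(|\nabla_h\bw_h-\BA|)$.
4. The difficulty is that step~3 leaves the full gradient $\nabla_h\bw_h-\BA$, whereas we need only the symmetric part $\BD_h\bw_h-\BA$. To remove the skew part, I would use the freedom in $\bp$: replace $\nabla\bp=\BA$ by $\nabla\bp=\BA+\BW$ with $\BW=\mean{\nabla_h\bw_h}_K^{\textup{skew}}$. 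Then $\bp$ is still divergence-free and linear, and $\BD\bp=\BA$ is unchanged. Using Korn's inequality on $K$ in Orlicz form for the shifted N-function $\varphi_{|\BA|}$ (as in \cite{dr-interpol}, valid on simplices with constants depending on $\omega_0$ and uniform $\Delta_2$-constants),
\begin{align*}
  \int_K\varphi_{|\BA|}\big(\abs{\nabla_h\bw_h-\BA-\BW}\big)\,\mathrm{d}x
  \lesssim \int_K\varphi_{|\BA|}\big(\abs{\BD_h\bw_h-\BA}\big)\,\mathrm{d}x .
\end{align*}
5. Convert back with Prop.~\ref{lem:hammer}: $\varphi_{|\BA|}(|\BD_h\bw_h-\BA|)\sim|\BF(\BD_h\bw_h)-\BF(\BA)|^2$. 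This proves \eqref{prop:app_V.0.1}.

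For \eqref{prop:app_V.0.2}, use \cite[Lem.~A.3]{bdr-phi-stokes}, which lets us replace $\BF(\mean{\BD_h\bw_h}_K)$ by $\mean{\BF(\BD_h\bw_h)}_K$ at the cost of a constant. The Poincaré inequality on $K$ then gives the bound $h_K^2\|\nabla\BF(\BD_h\bw_h)\|_{2,K}^2$. Summing over $K\in\mathcal{T}_h$ and using $h_K\le h$ yields \eqref{prop:app_V.0.3}.

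I expect step~4 to be the main obstacle. If a shifted Orlicz–Korn inequality on $K$ is unavailable, an alternative is to exploit finite dimensionality. One would estimate $\BD_h\PiDG(\bw_h-\bp)$ directly by an $L^\infty$ bound involving only $\BD_h(\bw_h-\bp)$, using that $\PiDG$ reproduces rigid motions, which are divergence-free.
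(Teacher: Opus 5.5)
Your proposal is correct and follows essentially the paper's route. The paper defers to \cite[Prop.~4.6]{kr-pnse-ldg-2} (and \cite[Thm.~5.7]{dr-interpol}, \cite[Thm.~3.4]{bdr-phi-stokes}), whose argument is your chain: compare with $\BF(\mean{\BD_h\bw_h}_K)$ via the shifted N-function $\varphi_{\abs{\mean{\BD_h\bw_h}_K}}$, use shift-uniform Orlicz stability of the projection on $\bw_h-\bp$, apply a modular Korn inequality on $K$, and finish \eqref{prop:app_V.0.2}--\eqref{prop:app_V.0.3} with \cite[Lem.~A.3]{bdr-phi-stokes} and Poincar\'e. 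The only point the paper itself adds is the one you isolate in step~4: the linear field $\bp$ with $\nabla\bp=\mean{\nabla_h\bw_h}_K$ is divergence-free, since $\textup{tr}\,\mean{\nabla_h\bw_h}_K=\mean{\textup{div}_h\bw_h}_K=0$, and is therefore reproduced by $\PiDG$.
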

\begin{proof}
  This result is essentially proved in \cite[Thm.~5.7]{dr-interpol}
  and \cite[Thm.~3.4]{bdr-phi-stokes} in the case of a full
  gradient. The case of a symmetric gradient and the $L^2$-projection
  into $\Vhk$ is treated in \cite[Prop.~4.6]{kr-pnse-ldg-2}. This
  proof works also in our case due to the fact that for
  $\bw_h\in W^{1,p}_\divo(\mathcal{T}_h)$ the gradient of the linear polynomial $\bp_h$ 
  defined via ${\nabla \bp_h|_K = \mean {\nabla \bw_h}_K}$~for~$K \in
  \mathcal T_h$, belongs to $\Vhk(0)$. 
\end{proof}
Next we treat the jump operators.
\begin{prop}\label{prop:app_VR}
  Let $\SSS$ satisfy Ass.~\ref{assum:extra_stress} with
  $p\in (1,\infty)$ and $\delta\in [0,\infty)$, and let
  $k\in \mathbb{N}$. Then, for every
  ${\bv \in W^{1,p}_\divo(\Omega)}$ with
  ${\BF(\BD\bv)\in W^{1,2}(\Omega)^{d\times d}}$ and
  $K \in \mathcal{T}_h$, it holds
  \begin{align}
    \int_{S_F} \phi_{\abs{\BD\bv}}\big (\Rhke (\bv - \PiDG \bv)\big)
    \,\mathrm{d}x
    &\lesssim h_F^2\,\int_{S_F} \bigabs{\nabla\BF(\BD \bv)}^2
      \,\mathrm{d}x\,,\label{prop:app_V.0.2R}
    \\
    \rho_{\phi_{\abs{\BD\bv}},\Omega}\big(\Rhke ( \bv- \PiDG
    \bv)\big)
    &\lesssim \norm{h_{\mathcal T}\nabla\BF(\BD \bv)}^2_{2,\Omega}
      \lesssim h^2\, \norm{\nabla\BF(\BD
      \bv)}^2_{2,\Omega}\label{prop:app_V.0.3R} 
  \end{align}
  with  constants  depending only on  $k$, $\omega_0$, and the
  characteristics of $\SSS$.
\end{prop}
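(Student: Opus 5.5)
The plan is to reduce the estimate for the jump operator to a local estimate on each face patch $S_F$, by combining the lifting bound for $\Rhke$ with trace inequalities and the Orlicz approximability of $\PiDG$ from Lem.~\ref{lem:stabpi}. The decisive point is to replace $\bv$ by $\bv-\bp$ for a suitable divergence-free linear polynomial $\bp$ that $\PiDG$ reproduces.

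\emph{Step 1: localisation.} For $\bw\coloneqq\bv-\PiDG\bv$, the operator $\Rhke\bw$ restricted to $K$ depends only on the jumps of $\bw$ on the faces $F\subseteq\partial K$. The analogue of \eqref{prop:n-function_E.3} from \cite[(A.23)]{dkrt-ldg}, which holds for shifted N-functions uniformly in the shift, therefore gives
\begin{align*}
\int_K \phi_a(\abs{\Rhke\bw})\,\mathrm{d}x\lesssim \sum_{F\subseteq\partial K} h_F\int_F \phi_a(h_F^{-1}\abs{\jump{\bw\otimes\bn}})\,\mathrm{d}s
\end{align*}
for any constant shift $a\ge0$.

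The shift $\abs{\BD\bv}$ is not constant, so I first replace it by the constant $a=\abs{\mean{\BD\bv}_{S_F}}$ using the change of shift in Lem.~\ref{lem:shift-change}. The price is the additional term $\int_{S_F}\abs{\BF(\BD\bv)-\BF(\mean{\BD\bv}_{S_F})}^2\,\mathrm{d}x$. By \cite[Lem.~A.3]{bdr-phi-stokes} and Poincar\'e's inequality on $S_F$, this term is bounded by $h_F^2\int_{S_F}\abs{\nabla\BF(\BD\bv)}^2\,\mathrm{d}x$.

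\emph{Step 2: jumps and Taylor polynomial.} Since $\jump{\bv\otimes\bn}=\bzero$, the jump of $\bw$ equals $-\jump{\PiDG\bv\otimes\bn}$. Choose $\bp$ to be the linear polynomial on $S_F$ with $\nabla\bp=\mean{\nabla\bv}_{S_F}$ and $\mean{\bp}_{S_F}=\mean{\bv}_{S_F}$. It is globally continuous on $S_F$, and $\divo\bp=\mean{\divo\bv}_{S_F}=0$, so $\bp|_K\in\Vhk(0)(K)$ and hence $\PiDG\bp=\bp$ elementwise. This gives
\begin{align*}
\jump{\bw\otimes\bn}_F=\jump{(\bv-\bp)-\PiDG(\bv-\bp)}_F .
\end{align*}
The Orlicz trace inequality for $W^{1,\phi_a}(K)$ then bounds the face term by
\begin{align*}
\sum_{K\subseteq S_F}\int_K \phi_a\big(h_K^{-1}\abs{\bg-\PiDG\bg}\big)+\phi_a\big(\abs{\nabla(\bg-\PiDG\bg)}\big)\,\mathrm{d}x,\qquad \bg\coloneqq\bv-\bp .
\end{align*}
Applying \eqref{eq:PiDGLpsistablelocal} and the approximation estimates of Lem.~\ref{lem:stabpi} with $\psi=\phi_a$, $\ell=1$ (whose constants are uniform in $a$ because $\Delta_2(\phi_a)$ is), and then Poincar\'e's inequality on the patch $S_F$, everything reduces to $\int_{S_F}\phi_a(\abs{\nabla\bv-\mean{\nabla\bv}_{S_F}})\,\mathrm{d}x$.

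\emph{Step 3: main obstacle.} I expect the main obstacle to be passing from the full gradient to the symmetric gradient, because only $\BF(\BD\bv)$ is assumed regular. I would first use the Orlicz Korn inequality on $S_F$ with constant shift, taken from \cite{kr-pnse-ldg-2} (which underlies Prop.~\ref{prop:app_V}), to bound the above integral by $\int_{S_F}\phi_a(\abs{\BD\bv-\mean{\BD\bv}_{S_F}})\,\mathrm{d}x$. Then \eqref{eq:hammera} turns this into $\int_{S_F}\abs{\BF(\BD\bv)-\BF(\mean{\BD\bv}_{S_F})}^2\,\mathrm{d}x$. By \cite[Lem.~A.3]{bdr-phi-stokes} and Poincar\'e's inequality this is at most $h_F^2\int_{S_F}\abs{\nabla\BF(\BD\bv)}^2\,\mathrm{d}x$, which proves \eqref{prop:app_V.0.2R}.

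If Korn's inequality on the patch is unavailable in this form, the alternative is to choose $\bp$ with $\BD\bp=\mean{\BD\bv}_{S_F}$ plus a rigid motion. This reduces the remaining error to $\nabla\bv$ minus its skew part, which is still controlled by Korn's inequality for $\phi_a$.

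Finally, summing \eqref{prop:app_V.0.2R} over $F\in\Gamma_h$ and using the finite overlap of the patches $S_F$ together with $h_F\lesssim h_{\mathcal T}$ yields \eqref{prop:app_V.0.3R}.
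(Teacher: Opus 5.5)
Your strategy is the one the paper relies on. Its proof defers to \cite[Prop.~4.8]{kr-pnse-ldg-2} and only adds that the divergence-free affine field with mean gradient is reproduced by $\PiDG$. Your Steps~1--3 are exactly that argument: the lifting bound, the change of shift to $\abs{\mean{\BD\bv}_{S_F}}$, subtracting a divergence-free affine $\bp$, the Orlicz trace inequality with Lem.~\ref{lem:stabpi}, Orlicz--Korn on $S_F$, \cite[Lem.~A.3]{bdr-phi-stokes} and Poincar\'e. For the jump on the face $F$ itself they are correct. Like the paper, you use the claim at the start of the appendix that $\PiDG$ acts elementwise and fixes locally divergence-free polynomials. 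This sits uneasily with $\Vhk(0)=\ker\Divhke$, which is not a product of elementwise spaces because $\Divhke$ contains the lifting of the normal jumps. That, however, is an issue with the paper's set-up rather than with your argument.

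The step that does not close is the localisation. In Step~1 you bound $\int_K\phi_a(\abs{\Rhke\bw})\,\mathrm{d}x$ by the jumps of $\bw$ on \emph{all} faces of $K$. That is correct, since $\Rhke\bw|_K$ sees every face of $K$, but Steps~2--3 only treat $\jump{\bw\otimes\bn}_F$. For a face $F'\subset\partial K$ with $F'\neq F$, the jump involves $\PiDG\bv$ on the element across $F'$, which lies outside $S_F$. It therefore cannot be controlled by $h_F^2\int_{S_F}\abs{\nabla\BF(\BD\bv)}^2\,\mathrm{d}x$. For instance, if $\bv$ is affine on $S_F$ but not on that neighbour, the right-hand side of \eqref{prop:app_V.0.2R} vanishes, while the jump on $F'$, and hence $\Rhke(\bv-\PiDG\bv)|_K$, is in general nonzero. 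So \eqref{prop:app_V.0.2R}, read literally with the global $\Rhke$, cannot be proved. It does hold for the face lifting $\boldsymbol{\mathcal R}^{k-1}_{h,F}$, which is supported in $S_F$ and satisfies $\Rhke=\sum_{F\in\Gamma_h}\boldsymbol{\mathcal R}^{k-1}_{h,F}$; this is how the cited source is set up. For the face lifting your Step~1 involves only $F$, and Steps~2--3 go through unchanged. The global bound \eqref{prop:app_V.0.3R} then follows by writing $\Rhke\bw|_K=\sum_{F\subset\partial K}\boldsymbol{\mathcal R}^{k-1}_{h,F}\bw|_K$ and using the $\Delta_2$-condition (at most $d+1$ terms, uniformly in the shift) and the finite overlap of the $S_F$. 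It does not follow by summing a local estimate for the global $\Rhke$, as in your last step. Alternatively, you can keep the global lifting and enlarge the patch on the right-hand side to the union of the $S_{F'}$ with $F'\subset\partial K\cup\partial K'$, at the cost of one more change of shift between neighbouring patches.
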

\begin{proof}
  A slightly different version of this result is proved in \cite[Prop.~4.8]{kr-pnse-ldg-2}. This
  proof works also in our case due to the fact that for
  $\bw_h\in W^{1,p}_\divo(\mathcal{T}_h)$ the gradient of the linear polynomial $\bp_h$ 
  defined via ${\nabla \bp_h|_K = \mean {\nabla \bw_h}_K}$~for~$K \in
  \mathcal T_h$, belongs to $\Vhk(0)$. 
\end{proof}
\begin{cor}\label{cor:app_VG}
  Let $\SSS$ satisfy Ass.~\ref{assum:extra_stress} with
  $p\in (1,\infty)$ and $\delta\in [0,\infty)$, and let
  $k\in \mathbb{N}$. Then, for every
  ${\bv \in W^{1,p}_\divo(\Omega)}$ with
  ${\BF(\BD\bv)\in W^{1,2}(\Omega)^{d\times d}}$ and
  $K \in \mathcal{T}_h$, it holds
  \begin{align}
    \int_K \bigabs{\BF (\BD \bv) - \smash{\BF \big(\Dhke \PiDG \bv\big)}}^2
    \,\mathrm{d}x
    &\lesssim h_K^2\,\int_K \bigabs{\nabla\BF(\BD \bv)}^2
      \,\mathrm{d}x\,,\label{prop:app_V.0.2G}
    \\
    \bignorm{\BF (\BD \bv) - \BF \big(\Dhke \PiDG
    \bv\big)}^2_{2,\Omega}
    &\lesssim \norm{h_{\mathcal T}\nabla\BF(\BD
      \bv)}^2_{2,\Omega} \lesssim h^2\, \norm{\nabla\BF(\BD
      \bv)}^2_{2,\Omega}\label{prop:app_V.0.3G} 
  \end{align}
  with  constants  depending only on  $k$, $\omega_0$, and the
  characteristics of $\SSS$.
\end{cor}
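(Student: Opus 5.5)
We split $\Dhke \PiDG \bv = \BD_h \PiDG\bv - \Rhks(\PiDG\bv)$ on each $K$. Since $\bv\in W^{1,p}_\divo(\Omega)$ is conforming, $\Rhke\bv=\bzero$, $\BD\bv=\BD_h\bv$ and $\Dhke\bv=\BD\bv$. Hence
\begin{align*}
  \Dhke\PiDG\bv-\BD\bv=\BD_h(\PiDG\bv-\bv)+\big[\Rhke(\bv-\PiDG\bv)\big]^{\textup{sym}} .
\end{align*}
We then estimate the two contributions separately by adding and subtracting $\BF(\BD_h\PiDG\bv)$:
\begin{align*}
  \int_K\abs{\BF(\BD\bv)-\BF(\Dhke\PiDG\bv)}^2\,\textup{d}x
  \lesssim \int_K\abs{\BF(\BD\bv)-\BF(\BD_h\PiDG\bv)}^2\,\textup{d}x
  +\int_K\abs{\BF(\BD_h\PiDG\bv)-\BF(\Dhke\PiDG\bv)}^2\,\textup{d}x .
\end{align*}
The first term is handled directly by \eqref{prop:app_V.0.2} with $\bw_h=\bv$, which is admissible since $\bv\in W^{1,p}_\divo(\Omega)\subset W^{1,p}_\divo(\mathcal T_h)$. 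It gives the bound $h_K^2\int_K\abs{\nabla\BF(\BD\bv)}^2\,\textup{d}x$.

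For the second term it is better not to pass through $\BF(\BD_h\PiDG\bv)$, because the shift would then be the discrete one. Instead we use \eqref{eq:hammera} with $\BA=\BD\bv$ and estimate directly:
\begin{align*}
  \abs{\BF(\BD\bv)-\BF(\Dhke\PiDG\bv)}^2\sim\phi_{\abs{\BD\bv}}\big(\abs{\BD\bv-\Dhke\PiDG\bv}\big)
  \lesssim \abs{\BF(\BD\bv)-\BF(\BD_h\PiDG\bv)}^2+\phi_{\abs{\BD\bv}}\big(\abs{\Rhke(\bv-\PiDG\bv)}\big).
\end{align*}
This uses the $\Delta_2$-condition of $\phi_{\abs{\BD\bv}}$ (uniform by Rem.~\ref{rem:phi_a}), the fact that symmetrisation does not increase the norm, and \eqref{eq:hammera} again for the first summand. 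After integrating over $K$, the first summand is again covered by \eqref{prop:app_V.0.2}. For the jump summand we invoke \eqref{prop:app_V.0.2R} on the patches $S_F$ with $F\in\Gamma_h(K)$. This yields $h_F^2\int_{S_F}\abs{\nabla\BF(\BD\bv)}^2$, and $h_F\sim h_K$ by shape regularity. The jump operator is supported locally, so its restriction to $K$ depends only on jumps on faces of $K$, which makes the patch bound sufficient. Strictly, this gives \eqref{prop:app_V.0.2G} with a slightly enlarged domain $\bigcup_{F\subset\partial K}S_F$ on the right. If the literal $K$-local form is wanted, one can interpret it this way, or note that the local estimate is only used after summation.

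Finally, \eqref{prop:app_V.0.3G} follows by summing over $K\in\mathcal T_h$. The global form of the jump term is \eqref{prop:app_V.0.3R}, and the bounded overlap of the patches $S_F$ absorbs the multiplicity. The last inequality is simply $h_{\mathcal T}\le h$.

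The main obstacle is the shift bookkeeping in the jump term. We must make sure the Orlicz function is $\phi_{\abs{\BD\bv}}$, the one in Prop.~\ref{prop:app_VR}, and not a discrete shift. Anchoring all comparisons at $\BA=\BD\bv$ in \eqref{eq:hammera} is what avoids the shift changes via Lem.~\ref{lem:shift-change} that would otherwise introduce uncontrolled discrete terms.
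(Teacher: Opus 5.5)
Your proposal is correct and is essentially the paper's argument: write $\Dhke\PiDG\bv=\BD_h\PiDG\bv-\Rhks\PiDG\bv$, use $\Rhke\bv=\bzero$, and combine \eqref{prop:app_V.0.2} and \eqref{prop:app_V.0.2R} via \eqref{eq:hammera} anchored at $\BD\bv$. Your caveat is also accurate: the jump contribution is only bounded over the patch $\bigcup_{F\subset\partial K}S_F$, not over $K$ itself, which the paper's ``follows immediately'' passes over, while the global bound \eqref{prop:app_V.0.3G} is unaffected (and, contrary to your last paragraph, routing through $\BF(\BD_h\PiDG\bv)$ would also work, since Lem.~\ref{lem:shift-change} only generates the already controlled term $\abs{\BF(\BD\bv)-\BF(\BD_h\PiDG\bv)}^2$).
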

\begin{proof}
  The assertions follow immediately from Prop.~\ref{prop:app_V} and
  Prop.~\ref{prop:app_VR}, also using that $\Dhke \PiDG \bv=\BD_h\PiDG
       \bv+(\Rhke\PiDG\bv)^{\textup {sym}}$ and $\Rhke\bv
       =\mathbf{0}$ in $L^p(\Omega)$, and Prop.~\eqref{lem:hammer}.
\end{proof}

Next we treat terms on the faces.
\begin{cor}\label{cor:PiDGapproxmlocal}
  Let $\psi$ be an N-function
  satisfying the $\Delta_2$-condition and $k \in \setN_0$.  Let ${K \in \mathcal{T}_h}$ and
  $F$ be a face of~$K$. Then, for every $\bw_h \in
  \WDGd$, and $\bu_h \in \Vhk$, $k\in \setN_0$, we have
  that
  \begin{align}
    \label{eq:PiDGapproxmlocal}
     h_F\int_F \psi\big(h_F^{-1} \bigabs{\bw_h - \PiDG \bw_h}\big)\,\textup{d}s 
    &\lesssim \int_K \psi(\abs{\nabla_h \bw_h})\,\textup{d}x\,,
    \\[-1.5mm]
    \label{eq:PiDGapproxmglobal}
    m_{\psi,h}\big(\bw_h - \PiDG \bw_h\big) &\lesssim \rho_{\psi,\Omega}(\nabla_h \bw_h)\,,
    \\
    \label{eq:PiDGapproxmglobal2}
    m_{\psi,h}(\bw_h )
    &\lesssim\rho_{\psi,\Omega}\big(h_\mathcal{T}^{-1}
      \bw_h\big)+c\,\rho_{\psi,\Omega}(\nabla_h \bw_h)\,, 
    \\
    \label{eq:PiDGapproxmglobal3}
    \int_{\Gamma_h} h_\Gamma \psi(|\avg{\bw_h-\PiDG
    \bw_h}|) \, \textup{d}s&\lesssim\rho_{\psi,\Omega}( h_{\mathcal{T}}\nabla _h\bw_h)\,,
    \\
    \label{eq:PiDGapproxmglobal1}
    \int_{\Gamma_h} h_\Gamma \psi(|\avg{\bu_h}|) \, \textup{d}s
    &\lesssim\rho_{\psi,\Omega}( \bu_h)\,,
  \end{align}
  with  constants  depending only on $k$, $\Delta_2(\psi)$, and $\omega_0$.
\end{cor}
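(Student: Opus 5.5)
The natural route is to reduce all five estimates to the local trace inequality on a single element and then sum. The basic tool is the scaled Orlicz trace inequality for $g \in W^{1,1}(K)$ and a face $F\subseteq\partial K$ (cf.~\cite[Lem.~A.2]{dkrt-ldg}, \cite[App.~A]{kr-orlicz-ldg}):
\begin{align*}
  h_F\int_F \psi(|g|)\,\textup{d}s \lesssim \int_K \psi(|g|)\,\textup{d}x + \int_K \psi(h_K|\nabla g|)\,\textup{d}x .
\end{align*}
It follows by applying the divergence theorem to $\psi(|g|)\,(x-x_F)$, using $\psi'(t)\,t\sim\psi(t)$ (from $\Delta_2$) and the $\varepsilon$-Young inequality \eqref{ineq:young}. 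Here $x_F$ is the vertex of $K$ opposite to $F$.

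\emph{Proof of \eqref{eq:PiDGapproxmlocal}.} Apply the trace inequality with $g = h_F^{-1}(\bw_h-\PiDG\bw_h)|_K$ and use $h_F\sim h_K$. This gives
\begin{align*}
  h_F\int_F\psi\big(h_F^{-1}|\bw_h-\PiDG\bw_h|\big)\,\textup{d}s \lesssim \int_K\psi\big(h_K^{-1}|\bw_h-\PiDG\bw_h|\big)\,\textup{d}x+\int_K\psi\big(|\nabla_h(\bw_h-\PiDG\bw_h)|\big)\,\textup{d}x .
\end{align*}
Both terms are controlled by Lem.~\ref{lem:stabpi}, applied with $\ell=1$ and $j=0$ respectively $j=1$, after rescaling $\psi$ by $h_K^{-1}$. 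Since $\psi(h_K^{-1}\cdot)$ is again an N-function with the same $\Delta_2$-constant, the constants stay uniform. This is exactly where $\bw_h\in\WDGd$ enters: the averaged Taylor polynomial of degree $\le1$ lies in $\Vhk(0)(K)$.

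\emph{Global estimates.} For \eqref{eq:PiDGapproxmglobal}, note that on an interior face the jump is bounded by the sum of the two one-sided traces. Convexity and the $\Delta_2$-condition then split $\psi(h_F^{-1}|\jump{\cdot}|)$ into the two one-sided contributions. Summing \eqref{eq:PiDGapproxmlocal} over all faces, using that each element has a bounded number of faces, gives the claim. Estimate \eqref{eq:PiDGapproxmglobal2} follows in the same way by applying the trace inequality directly to $g=h_F^{-1}\bw_h$. For \eqref{eq:PiDGapproxmglobal3}, bound the average by the one-sided traces and apply the trace inequality to $g=\bw_h-\PiDG\bw_h$. Lem.~\ref{lem:stabpi}, with $j=0,\ell=1$ and with $j=1,\ell=1$ (the latter multiplied by $h_K$), bounds both volume terms by $\int_K\psi(h_K|\nabla_h\bw_h|)\,\textup{d}x$.

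\emph{Proof of \eqref{eq:PiDGapproxmglobal1}.} For $\bu_h\in\Vhk$ we use the inverse estimate $\|\bu_h\|_{\infty,K}\lesssim\dashint_K|\bu_h|\,\textup{d}x$ for polynomials. Together with Jensen's inequality this yields
\begin{align*}
  h_F\int_F\psi(|\bu_h|)\,\textup{d}s \lesssim |K|\,\psi\Big(\dashint_K|\bu_h|\,\textup{d}x\Big) \lesssim \int_K\psi(|\bu_h|)\,\textup{d}x,
\end{align*}
where we also used $h_F|F|\sim|K|$. Summing over faces, after bounding the average by the one-sided traces, gives the claim.

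The main obstacle is keeping all constants independent of $h$ in the Orlicz setting. The rescaled functions $\psi(h_K^{-1}\cdot)$ must retain uniform $\Delta_2$-constants, which they do. The trace inequality must be used in its modular form rather than a norm form, which requires the Young-inequality argument above.
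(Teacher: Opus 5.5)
Your argument is correct and is essentially the paper's proof. The paper also reduces all five estimates to the scaled Orlicz trace inequality (citing \cite[Lem.~A.1]{dkrt-ldg}). It combines this with the local approximation and stability properties of $\PiDG$ from Lem.~\ref{lem:stabpi}, or an inverse estimate for $\bu_h\in\Vhk$, and then uses $h_F\sim h_K$ and sums over faces with bounded overlap.

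There are two cosmetic points. First, for $\ell=1$ the averaged Taylor polynomial is a constant, not of degree $\le 1$. Second, in your derivation of the trace inequality the plain Young inequality together with $\psi^*(\psi'(s))\le s\,\psi'(s)\le \Delta_2(\psi)\,\psi(s)$ suffices. You therefore do not need \eqref{ineq:young}, whose constant involves $\Delta_2(\psi^*)$, and the corollary does not assume $\psi^*\in\Delta_2$.
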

\begin{proof}
  All assertions are based on a version of the trace theorem
  \cite[Lem.~A.1]{dkrt-ldg} and proved in   \cite[App.~A.1]{dkrt-ldg}
  (cf.~\cite[Cor.~A.7]{kr-orlicz-ldg}) for the $L^2$-projection
  into $\Vhk$. In view of our analogous results for the
  $L^2$-projection into $\Vhk(0)$ above, the proofs work
  analogously. We also use that $h_F \sim h_K$ for all $F \in
  \Gamma_h$, $F \subset \partial K$.
\end{proof}

\begin{lem}\label{lem:conv}
  Let $\psi$ be an N-function
  satisfying the $\Delta_2$-condition and $k\in \mathbb{N}$. Then, for
  every $\bu \in W^{1,\psi}_\divo(\Omega)$, we have that
  \begin{alignat}{2}
    \label{eq:grad}
    \rho_{\psi,\Omega}\big(\nabla_h(\PiDG \bu -\bu)\big)&\to 0 &&\quad (h\to0)\,,
    \\
    \label{eq:mod}
    m_{\psi,\Omega}\big(\PiDG \bu -\bu\big)&\to 0 &&\quad (h\to0)\,,
    \\
    \label{eq:R}
    \rho_{\psi,\Omega}\big(\Rhk(\PiDG \bu -\bu)\big)&\to 0 &&\quad (h\to0)\,,
    \\
    \label{eq:G}
    \rho_{\psi,\Omega}\big(\Ghk(\PiDG \bu -\bu)\big)&\to 0 &&\quad (h\to0)\,,
\end{alignat}
\end{lem}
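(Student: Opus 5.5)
The argument is the standard density one: approximate $\bu$ by smooth solenoidal fields, bound the error for those using the Orlicz approximation estimates of Cor.~\ref{cor:stabpi} and Cor.~\ref{cor:PiDGapproxmlocal}, and control the remainder by stability, uniformly in $h$.

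Fix $\bu\in W^{1,\psi}_\divo(\Omega)$ and $\eta>0$. Since $\psi\in\Delta_2$, smooth functions are modular-dense in $W^{1,\psi}$. Moreover, divergence-free fields $\bu^\eta\in C^\infty(\overline\Omega)^d\cap W^{2,\psi}_\divo(\Omega)$ are modular-dense in $W^{1,\psi}_\divo(\Omega)$. One way to see this is to mollify and then correct the divergence with a Bogovski\u{\i} operator, which is bounded on $W^{1,\psi}$ for $\psi,\psi^*\in\Delta_2$; if only $\psi\in\Delta_2$ is available, one can also simply mollify after a star-shaped decomposition. So we may choose $\bu^\eta$ with
\[
\rho_{\psi,\Omega}(\nabla(\bu-\bu^\eta))+\rho_{\psi,\Omega}(\bu-\bu^\eta)\le\eta .
\]
Because $\PiDG$ is linear and the modular is quasi-convex under $\Delta_2$, we have
\[
\rho_{\psi,\Omega}(\nabla_h(\PiDG\bu-\bu))\lesssim \rho_{\psi,\Omega}(\nabla_h(\PiDG(\bu-\bu^\eta)-(\bu-\bu^\eta)))+\rho_{\psi,\Omega}(\nabla_h(\PiDG\bu^\eta-\bu^\eta)).
\]
The first term is bounded by $c\,\rho_{\psi,\Omega}(\nabla(\bu-\bu^\eta))\le c\,\eta$ via \eqref{eq:PiDGapprox2}, which applies since $\bu-\bu^\eta\in W^{1,\psi}_\divo(\mathcal T_h)$. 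The second term is bounded by \eqref{eq:PiDGapprox1} by $\rho_{\psi,\Omega}(h\nabla^2\bu^\eta)\le h\,\rho_{\psi,\Omega}(\nabla^2\bu^\eta)$ for $h\le1$, by convexity, and this tends to $0$. Letting $h\to0$ and then $\eta\to0$ gives \eqref{eq:grad}.

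For \eqref{eq:mod} the same splitting applies. The rough part is controlled by \eqref{eq:PiDGapproxmglobal}, which gives $m_{\psi,h}(\bw-\PiDG\bw)\lesssim\rho_{\psi,\Omega}(\nabla\bw)$ with $\bw=\bu-\bu^\eta$. For the smooth part, the jumps of $\bu^\eta$ vanish, so $m_{\psi,h}(\PiDG\bu^\eta-\bu^\eta)$ can be written as the modular of $\bw_h=\bu^\eta-\PiDG\bu^\eta$. We then use \eqref{eq:PiDGapproxmglobal2} together with \eqref{eq:PiDGapproxpsi} applied with $j=0,\ell=2$ and $j=1,\ell=2$. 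This yields a bound $\lesssim\rho_{\psi,\Omega}(h_{\mathcal T}\nabla^2\bu^\eta)\to0$.

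For \eqref{eq:R} we use the local bound \eqref{prop:n-function_E.3}, namely $\dashint_K\psi(|\Rhk\bw|)\lesssim\sum_F\dashint_F\psi(h_F^{-1}|\jump{\bw\otimes\bn}|)$. Its proof via \cite[(A.23)]{dkrt-ldg} holds for any broken $W^{1,\psi}$ function and any degree $k$. Summing over $K$ gives $\rho_{\psi,\Omega}(\Rhk\bw)\lesssim m_{\psi,h}(\bw)$, so \eqref{eq:R} follows from \eqref{eq:mod}. Finally, \eqref{eq:G} follows from \eqref{eq:DGnablaR}, \eqref{eq:grad}, \eqref{eq:R} and $\Delta_2$.

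The main obstacle is the density step. We need divergence-free smooth approximants with modular convergence, together with the fact that every estimate of the appendix holds uniformly in $h$ for broken solenoidal functions. I would first try to avoid smoothness altogether: pick $\bu^\eta\in W^{2,\psi}_\divo$ by mollification plus the Bogovski\u{\i} correction, and only check that \eqref{eq:PiDGapprox1} needs nothing more than $W^{2,\psi}_\divo(\mathcal T_h)$.
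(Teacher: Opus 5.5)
Your proposal is essentially the paper's proof. The paper also approximates $\bu$ by fields in $C^\infty_\divo(\overline\Omega)$ and then argues as in \cite[Lem.~A.9]{kr-orlicz-ldg}, which is your splitting into a rough part handled by \eqref{eq:PiDGapprox2} and \eqref{eq:PiDGapproxmglobal}, a smooth part handled by \eqref{eq:PiDGapprox1} and \eqref{eq:PiDGapproxmglobal2} with \eqref{eq:PiDGapproxpsi}, then the lifting bound \eqref{prop:n-function_E.3} for $\Rhk$ and \eqref{eq:DGnablaR} for $\Ghk$; the one thing to fix is your sketch of the density step, which the paper merely asserts: a partition of unity into star-shaped pieces destroys $\divo \bu = 0$, and mollifying first and then applying a Bogovski\u{\i} correction in general gives only $W^{1,\psi}$ rather than $W^{2,\psi}$ approximants (its datum does not vanish on $\partial\Omega$), whereas in the only case used later, $\bu\in\Vo_\divo$, you can extend $\bu$ by zero (still solenoidal on $\mathbb{R}^d$ since its trace vanishes) and mollify, which works for every $\psi\in\Delta_2$.
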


\begin{proof}
  Since for every $\bu \in W^{1,\psi}_\divo(\Omega)$, there exists a
  sequence  $(\bu^n)_{n\in \mathbb{N}} \subseteq C^{\infty}_\divo(\overline
  {\Omega})^d$ such that $ \rho_{\psi, \Omega}\big(\nabla (\bu^n-\bu)\big)\to 0 \quad  (n \to
  \infty)$ we can argue as in the proof of \cite[Lem.~A.9]{kr-orlicz-ldg}. 
\end{proof}

\begin{lem}\label{lem:conv1}
  For every $\BA\in L^{2}(\Omega)$, we have that
  \begin{alignat}{2}
    \label{eq:grad1}
 \sum_{F\in \Gamma_h} \norm{\BA- \mean {\BA}_{S_F}}^2_{2,S_F} &\to 0 &&\quad (h\to0)\,.
\end{alignat}
\end{lem}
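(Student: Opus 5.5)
The plan is a density argument combined with the uniform stability of the patch-mean operator. Set
\[
T_h\BA\coloneqq \sum_{F\in\Gamma_h}\norm{\BA-\mean{\BA}_{S_F}}^2_{2,S_F}.
\]
The first step is to show that $T_h$ is uniformly bounded on $L^2(\Omega)$. Since $\norm{\BA-\mean{\BA}_{S_F}}_{2,S_F}\le\norm{\BA}_{2,S_F}$ (the mean is the $L^2(S_F)$-projection onto constants), we get $T_h\BA\le\sum_{F}\norm{\BA}^2_{2,S_F}$. Each element $K$ belongs to at most $d+1$ patches $S_F$, since a simplex has $d+1$ faces. Hence $T_h\BA\le(d+1)\norm{\BA}_{2,\Omega}^2$, uniformly in $h$.

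The second step uses the triangle inequality. For any $\BB$ we have
\[
\norm{\BA-\mean{\BA}_{S_F}}_{2,S_F}\le \norm{\BB-\mean{\BB}_{S_F}}_{2,S_F}+\norm{(\BA-\BB)-\mean{\BA-\BB}_{S_F}}_{2,S_F}.
\]
Squaring, summing over $F$, and applying the first step to $\BA-\BB$ gives
\[
T_h\BA\le 2\,T_h\BB+2(d+1)\norm{\BA-\BB}^2_{2,\Omega}.
\]

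The third step treats smooth functions. Choose $\BB\in C^\infty(\overline\Omega)^{d\times d}$ with $\norm{\BA-\BB}_{2,\Omega}\le\varepsilon$, which is possible because smooth functions are dense in $L^2$. Poincaré's inequality on the patch $S_F$ then gives
\[
\norm{\BB-\mean{\BB}_{S_F}}_{2,S_F}\lesssim h_F\norm{\nabla\BB}_{2,S_F}.
\]
The patch $S_F$ is a union of at most two shape-regular simplices sharing a face, hence connected with a uniformly controlled Poincaré constant. Alternatively, one can simply use $\abs{\BB(x)-\mean{\BB}_{S_F}}\le \mathrm{diam}(S_F)\norm{\nabla\BB}_\infty\lesssim h$, which avoids any Poincaré constant on non-convex patches. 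Summing with the finite overlap yields $T_h\BB\lesssim h^2\norm{\nabla\BB}_{2,\Omega}^2$, or in the sup-norm variant $T_h\BB\lesssim h^2\norm{\nabla\BB}^2_\infty\abs{\Omega}$. Either way $T_h\BB\to0$.

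Combining the three steps, $\limsup_{h\to0}T_h\BA\le 2(d+1)\varepsilon^2$, and since $\varepsilon>0$ is arbitrary, \eqref{eq:grad1} follows. The only mildly delicate point is the uniform control on the patches. The $L^\infty$ Lipschitz bound sidesteps this and is the route I would take.
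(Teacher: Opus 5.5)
Your proof is correct and follows essentially the same route as the paper: approximate $\BA$ by smooth functions, split with the triangle inequality, control the mean-value term (you via the $L^2$-projection property, the paper via Jensen) and the smooth part by Poincar\'e on $S_F$, and conclude with the finite overlap of the patches and the arbitrariness of $\varepsilon$. One small fix for your sup-norm variant on a possibly non-convex patch $S_F$: take $\BB\in C_0^\infty(\Omega)^{d\times d}$ extended by zero, as the paper does, so that the global Lipschitz bound $\abs{\BB(x)-\BB(y)}\le\norm{\nabla\BB}_\infty\abs{x-y}$ holds for all $x,y\in S_F$.
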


\begin{proof}
  Since for every $\BA \in L^2(\Omega)^{d\times d}$, there exists a
  sequence  $(\BA^n)_{n\in \mathbb{N}} \subseteq C^{\infty}_0(
  {\Omega})^{d \times d}$ such that $ \|\BA^n-\BA\|_{2,\Omega}\to 0 \quad  (n \to
  \infty)$,  we  obtain, also using  Jensens inequality, 
  Poincar\'e inequality and the finite overlapping property of $S_F$
  for ${F\in \Gamma_h} $, that 
  \begin{align*}
    \sum_{F\in \Gamma_h} \norm{\BA- \mean {\BA}_{S_F}}^2_{2,S_F}
    & \lesssim \sum_{F\in \Gamma_h} \norm{\BA- \BA^n }^2_{2,S_F} + \norm
      {\BA^n- \mean {\BA^n}_{S_F}}^2_{2,S_F} +  \norm{\mean
      {\BA}_{S_F}- \mean {\BA}_{S_F}}^2_{2,S_F}
    \\
    &\lesssim \norm{\BA- \BA^n }^2_{2,\Omega} +  \sum_{F\in \Gamma_h}
      h_F^2\,\norm{\nabla \BA^n }^2_{2,S_F} \lesssim \norm{\BA- \BA^n }^2_{2,\Omega} +  
      h^2\,\norm{\nabla \BA^n }^2_{2,\Omega} \,.
  \end{align*}
  For every $\vep >0$ there exists $n_0 \in \setN$ such that
  $\norm{\BA- \BA^n }^2_{2,\Omega} \le \vep$. Using this $n_0 $ in the
  previous inequality we get
  \begin{align*}
    \lim _{h \to 0} \sum_{F\in \Gamma_h} \norm{\BA- \mean
    {\BA}_{S_F}}^2_{2,S_F} \lesssim \vep\,,
  \end{align*}
  which yields the assertion since $\vep$ was arbitrary. 
\end{proof}

\section{Coercivity}
\begin{prop}\label{prop:coer}
  Assume that $\SSS$ satisfies Ass.~\ref{assum:extra_stress} for some
  $p\in (1,\infty)$, $\delta\in [0,\infty)$. Then, for $\alpha>0$
  sufficiently large, depending only on $k$, $\omega_0$ and the
  characteristics of $\SSS$, there holds for every $\bv_h\in V_h^k $
  that 
  \begin{align*}
    \langle \BA_h\bv_h,\bv_h\rangle_{V_h^k} &\gtrsim \|
    \bv_h\|_{\nabla ,p,h }^p-\delta^p\,\vert
    \Omega\vert
  \end{align*}
  with a constant depending only on $k$, $\omega_0$ and the
  characteristics of $\SSS$.
\end{prop}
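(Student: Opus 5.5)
We follow the structure of the coercivity argument in Prop.~\ref{prop:stabPh}, the new difficulty being that the shift $\widetilde\bbeta_h(\bv_h)=\abs{\avg{\PiDGe\Dhke\bv_h}}$ is no longer bounded below by the shift of the volume term. First split, using $\BD\BE_h\bv_h=\Dhke\BE_h\bv_h$,
\begin{align*}
\langle \BA_h\bv_h,\bv_h\rangle_{V_h^k}
&=(\SSS(\Dhke\bv_h),\Dhke\bv_h)_\Omega+\alpha\,\langle \SSS_{\widetilde\bbeta_h(\bv_h)}(h_\Gamma^{-1}\jump{\bv_h\otimes\bn}),\jump{\bv_h\otimes\bn}\rangle_{\Gamma_h}
\\
&\quad+(\SSS(\Dhke\bv_h),\Dhke(\BE_h\bv_h-\bv_h))_\Omega\,.
\end{align*}
By Prop.~\ref{lem:hammer} and Rem.~\ref{rem:sa}, the first two terms are bounded below by $c\,\rho_{\varphi,\Omega}(\Dhke\bv_h)+c\,\alpha\, m_{\varphi_{\widetilde\bbeta_h(\bv_h)},h}(\bv_h)$.

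For the third term we apply Lem.~\ref{lem:shift} with $\bv=\bzero$ (so $\BF(\BD\bv)=\bzero$ and the two oscillation terms vanish) and $\bz_h=\bv_h$. This gives, for every $\kappa>0$,
\begin{align*}
\abs{(\SSS(\Dhke\bv_h),\Dhke(\BE_h\bv_h-\bv_h))_\Omega}\lesssim \kappa\,\|\BF(\Dhke\bv_h)\|_{2,\Omega}^2+c_\kappa\, m_{\varphi_{\widetilde\bbeta_h(\bv_h)},h}(\bv_h)\,.
\end{align*}
This is the key point. The smoothing error is controlled by the modular with exactly the same shift as the penalty term, so it can be absorbed: we use $\|\BF(\Dhke\bv_h)\|_{2,\Omega}^2\sim\rho_{\varphi,\Omega}(\Dhke\bv_h)$, choose $\kappa$ small, and then take $\alpha\ge 2c_\kappa/c$. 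This yields
\begin{align*}
\langle \BA_h\bv_h,\bv_h\rangle_{V_h^k}\gtrsim \rho_{\varphi,\Omega}(\Dhke\bv_h)+\alpha\, m_{\varphi_{\widetilde\bbeta_h(\bv_h)},h}(\bv_h)\,.
\end{align*}
The proof of Lem.~\ref{lem:shift} is carried out for a general $\bv\in V$, so the choice $\bv=\bzero$ is admissible.

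The remaining, and main, obstacle is to pass from the shifted jump modular to the unshifted $\|h_\Gamma^{-1/p'}\jump{\bv_h\otimes\bn}\|_{p,\Gamma_h}^p$. For $p\le2$ this is immediate from Rem.~\ref{rem:phi_a} ($\varphi_a\ge\varphi$ for $p\le 2$). For $p>2$ we use the shift change Lem.~\ref{lem:shift-change} (second inequality, with $\BA=\bzero$, $\BB=\avg{\PiDGe\Dhke\bv_h}$):
\begin{align*}
\varphi(t)\le c_\varepsilon\,\varphi_{\widetilde\bbeta_h(\bv_h)}(t)+\varepsilon\,\varphi(\widetilde\bbeta_h(\bv_h))\,.
\end{align*}
Integrating with weight $h_\Gamma$ over $\Gamma_h$, and using the trace estimate \eqref{eq:PiDGapproxmglobal1}, the stability of $\PiDGe$ (cf.~\cite[(A.11)]{dkrt-ldg}) and Jensen's inequality, gives
\begin{align*}
m_{\varphi,h}(\bv_h)\le c_\varepsilon\, m_{\varphi_{\widetilde\bbeta_h(\bv_h)},h}(\bv_h)+c\,\varepsilon\,\rho_{\varphi,\Omega}(\Dhke\bv_h)\,.
\end{align*}
Choosing $\varepsilon$ small, the last term is absorbed into the volume term. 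Hence $\langle\BA_h\bv_h,\bv_h\rangle_{V_h^k}\gtrsim\rho_{\varphi,\Omega}(\Dhke\bv_h)+m_{\varphi,h}(\bv_h)$, where we may take $\alpha\ge1$ large, depending only on $k$, $\omega_0$ and the characteristics of $\SSS$. (This is in the spirit of \cite[Lem.~4.1]{kr-orlicz-ldg}.)

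Finally we conclude exactly as in \eqref{eq:apri}. We use $\varphi(t)+\delta^p\gtrsim t^p$, $\int_{\Gamma_h}h_\Gamma\,\textup{d}s\sim\abs{\Omega}$, the norm equivalence \eqref{eq:equi2}, and Korn's inequality Prop.~\ref{korn} to obtain $\langle\BA_h\bv_h,\bv_h\rangle_{V_h^k}\gtrsim\|\bv_h\|_{\nabla,p,h}^p-\delta^p\abs{\Omega}$.
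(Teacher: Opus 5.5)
Your argument is correct up to one misstatement that is easy to repair, and it takes a genuinely different route from the paper.

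\textbf{The misstatement.} Rem.~\ref{rem:phi_a} gives $\varphi_a\le\varphi_b$ for $a\le b$ when $p\ge 2$, and the reverse when $p\le 2$. Hence $\varphi_{\widetilde\bbeta_h(\bv_h)}\ge\varphi$ holds for $p\ge 2$, not for $p\le 2$. For $p<2$ one has $\varphi_{\widetilde\bbeta_h(\bv_h)}\le\varphi$, and that is precisely the case needing work. The slip is harmless, because your shift-change step never uses $p>2$ and therefore covers every $p$; just run it for all $p$. Two small points about that step:
\begin{itemize}
\item In Lem.~\ref{lem:shift-change} the roles are $\BB=\bzero$ and $\BA=\avg{\PiDGe\Dhke\bv_h}$, together with $\varphi_a(a)\sim\varphi(a)$. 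Your displayed inequality is right; only the labels are swapped.
\item Use $\alpha\,m_{\varphi_{\widetilde\bbeta_h(\bv_h)},h}(\bv_h)\ge m_{\varphi_{\widetilde\bbeta_h(\bv_h)},h}(\bv_h)$ (for $\alpha\ge 1$) before the shift change. Then $\varepsilon$ does not depend on $\alpha$, and the final constant is uniform in $\alpha$.
\end{itemize}

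\textbf{Comparison with the paper.} The paper bounds the smoothing term $(\SSS(\Dhke\bv_h),\Dhke(\BE_h\bv_h-\bv_h))_\Omega$ by the \emph{unshifted} jump modular $m_{\varphi,h}(\bv_h)$. It does this via Young's inequality and Prop.~\ref{prop:n-function_E} with $\psi=\varphi$.
\begin{itemize}
\item For $p\ge 2$, this bound is dominated by the penalty, since $\varphi\le\varphi_{\widetilde\bbeta_h(\bv_h)}$.
\item For $p<2$, the paper must extract $m_{\varphi,h}(\bv_h)$ from $\alpha\,m_{\varphi_{\widetilde\bbeta_h(\bv_h)},h}(\bv_h)$. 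It uses the quantitative inequalities \eqref{eq:shiftp} and \eqref{eq:youngp} with an $\alpha$-dependent choice $\vep^4\sim\alpha^{-1}$. The gain $\alpha\vep^2\sim\alpha^{1/2}$ then beats the fixed smoothing constant, while the loss $\alpha\vep^4\rho_{\varphi,\Omega}(\Dhke\bv_h)$ stays bounded.
\end{itemize}

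You instead measure the smoothing error directly in the shifted modular, with the same shift as the penalty, via Lem.~\ref{lem:shift} with $\bv=\bzero$. Absorption into the penalty is then immediate for large $\alpha$. The conversion to $m_{\varphi,h}$ happens afterwards, with a fixed $\varepsilon$ and only the qualitative shift change.

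What each route buys:
\begin{itemize}
\item Your route is uniform in $p$ and needs no explicit constant bookkeeping. It also reuses a lemma the paper needs anyway for Thm.~\ref{thm:error_primaln}.
\item The paper's route is more elementary: it only invokes Prop.~\ref{prop:n-function_E} for the single N-function $\varphi$. Yours inherits the heavier machinery inside Lem.~\ref{lem:shift}: elementwise means, face averages, $\BF$-oscillations on $S_F$, and Prop.~\ref{prop:n-function_E} applied with $K$-dependent shifted N-functions.
\end{itemize}
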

\begin{proof}
  We distinguish the cases $p \ge 2$ and $p<2$. Let us start with the
  former one ${p\ge 2}$.  For every ${\bv_h\in V_h^k} $, using 
  that $\BD \BE_h \bv_h=\Dhke \BE_h \bv_h$, 
  ${\SSS (\BA):\BA\sim \varphi(\vert \BA\vert )}$ and
  ${\SSS_{\widetilde \bbeta_h(\bv_h)} (\BA):\BA\sim
    \varphi_{\widetilde \bbeta_h(\bv_h)}(\vert \BA\vert )}$
  for~all~${\BA\in \mathbb{R}^{d\times d}}$ (cf.\
Prop.~\ref{lem:hammer} and Rem.~\ref{rem:sa}), $\varphi_{\widetilde \bbeta_h(\bv_h)}\ge \varphi$
  (cf.\ Rem.~\ref{rem:phi_a}), the $\varepsilon$-Young inequality
  \eqref{ineq:young} with $\psi=\varphi$, and the approximation
  properties of $\BE_h$ in Prop.~\ref{prop:n-function_E}
  with $\psi=\varphi$, we find that
    \begin{align*}
      \langle \BA_h\bv_h,\bv_h\rangle_{V_h^k}
      &=(\SSS(\Dhke\bv_h),\Dhke
        \bv_h)_{\Omega}+ \alpha\, \langle \SSS_{\widetilde \bbeta_h(\bv_h)}(
        h_\Gamma^{-1} \jump{ \bv_h \otimes\bn} ) , \jump{ \bv_h
        \otimes\bn}\rangle_{\Gamma_h} 
      \\
      &\quad + (\SSS(\Dhke \bv_h),\Dhke (\BE_h \bv_h- \bv_h))_{\Omega}
      \\
      &	\ge (c-\varepsilon)\,
        \rho_{\varphi,\Omega}(\Dhke \bv_h)+\alpha\,c\,  m_{\varphi,h}(\bv_h)
      -c_\varepsilon\,
        \rho_{\varphi,\Omega}(\Dhke (\BE_h \bv_h- \bv_h)) 
      \\
      &	\ge (c-\varepsilon)\,
        \rho_{\varphi,\Omega}(\Dhke \bv_h)+(\alpha
        \,c-c_\varepsilon\,c)\, m_{\varphi,h}(\bv_h)\,. 
    \end{align*}
    Consequently, choosing first $\varepsilon>0$ sufficiently small
    and, subsequently, $\alpha>0$ sufficiently large, also using
    $\varphi(t)+t^p\sim t^p+ \delta^p$ for all $t\ge 0$    (cf.~\cite{bdr-7-5}),
    $\int_{\Gamma_h}h_\Gamma \, \textup{d}s\sim |\Omega|$,
    the norm equivalence \eqref{eq:equi2}, and Korn's inequality (Prop.~\ref{korn}), we
    arrive, for every $\bv_h\in V_h^k $, at
    \begin{align*}
        \langle \BA_h\bv_h,\bv_h\rangle_{V_h^k} &\gtrsim \|\Dhke
        \bv_h\|_{p,\Omega}^p+
        \|h_{\Gamma}^{\smash{-1/p'}}\jump{\bv_h\otimes
          \bn}\|_{p,\Gamma_h}^p-\delta^p\,\vert
        \Omega\vert\,(1+\alpha)\\&\gtrsim \|
        \bv_h\|_{\nabla ,p,h }^p-\delta^p\,\vert
        \Omega\vert\,,
    \end{align*}
    which is the assertion for $p \ge 2$.

    For $p<2$ we have to be more carefull with the constants. We will
    denote by $c_i$, $i\in \setN$, concrete constants from specific
    estimates depending only on $k$, $\omega_0$ and the
    characteristics of $\SSS$, which are not changed anymore. It is shown in
    \cite[Lem.~2.10]{kang-halfspace} that there are constants $c_1,c_2$
    such that for all $a,t,s \ge 0$ and $\vep>0$ there holds
    \begin{align}
      \phi_a(t) &\ge \vep^2\, c_1\, \phi(t) - \vep^4\, \phi(a)\,,\label{eq:shiftp}
      \\
      t\, \phi'(s) &\le  c_2\,\big (\vep^{-2}\, \phi(t) +\vep ^2
                     \phi(s)\big )\,.\label{eq:youngp} 
    \end{align}
    From \eqref{eq:shiftp} with
    $t = \abs{\bv_h}, a =\abs{\avg {\PiDGe \Dhke\bv_h}}$,
    the stability properties of $\PiDG$ in \eqref{eq:PiDGapproxmglobal1}, and the stability of $\PiDGe$
    (cf.~\cite[(A.11)]{dkrt-ldg}) follows 
    \begin{align}
      \label{eq:mphi}
      m_{\varphi_{\widetilde \bbeta (\bv_h)},h}(\bv_h) \ge c_1 \, \vep
      ^2\, m_{\varphi,h}(\bv_h) -c_3\, \vep^{4}\, \rho_{\phi,
      \Omega}(\Dhke \bv_h)\,.
    \end{align}
    For every ${\bv_h\in V_h^k} $, using first that
    $\BD \BE_h \bv_h\!=\!\Dhke \BE_h \bv_h$, the equivalences
    ${\SSS (\BA):\BA\!\sim \!\varphi(\vert \BA\vert )}$ and 
    ${\SSS_{\widetilde \bbeta_h(\bv_h)} (\BA):\BA\sim
      \varphi_{\widetilde \bbeta_h(\bv_h)}(\vert \BA\vert )}$
    for~all~${\BA\in \mathbb{R}^{d\times d}}$ (cf.\
  Prop.~\ref{lem:hammer} and Rem.~\ref{rem:sa}), \eqref{eq:shiftp},
  \eqref{eq:mphi}, \eqref{eq:hammere}
    with the constant denoted by $c_7$,
    \eqref{eq:youngp} with $\vep =\kappa$, and $\phi(c_7 \, t)\le
    c_7^2\,\phi (t)$ (cf.~\cite[(2.6)]{kang-halfspace}), Prop.~\ref{prop:n-function_E}
    with $\psi=\varphi$ and constant denoted by $c_8$,  we obtain that 
    \begin{align*}
      \langle \BA_h\bv_h,\bv_h\rangle_{V_h^k}
      &\ge  c_4\, \rho_{\phi, \Omega}(\Dhke \bv_h)
        +\alpha\, c_4\, m_{\varphi_{\widetilde \bbeta
        (\bv_h)},h}(\bv_h) 
      \\
      &\quad + (\SSS(\Dhke \bv_h),\Dhke (\BE_h \bv_h- \bv_h))_{\Omega}
      \\
      &	\ge (c_4-\varepsilon^4 \, c_3\,c_4\,\alpha -\kappa^2\, c_2)\,
        \rho_{\varphi,\Omega}(\Dhke \bv_h)
      \\
      &\quad +(\vep^2\,c_1\, c_4\,\alpha
        - \kappa^{-2}\,c_2\, c_7^2\, c_8 )\, m_{\varphi,h}(\bv_h)\,.
    \end{align*}
    Now we choose for any given $\alpha$ the number $\vep$ such that $\vep ^4\, c_3\, \alpha=\tfrac 12
    $, then we choose $\kappa$ such that $\tfrac {c_4}4= \kappa^2\,
    c_2$, and finally $\alpha$ such that $\alpha^{\frac 12}
    \frac{c_1\, c_4}{(2\, c_3)^{\frac 12}}= 8\frac{c_2^2\, c_7^2 \, c_8}{c_4}$.
    Thus, we arrive at
    \begin{align*}
      \langle \BA_h\bv_h,\bv_h\rangle_{V_h^k}
      &\ge  \frac{c_4}4\, \rho_{\phi, \Omega}(\Dhke \bv_h)
        + 4\,\frac{c_2^2\, c_7^2\, c_8 }{c_4}\, m_{\varphi,h}(\bv_h)\,.
    \end{align*}
    Using $\varphi(t)+t^p\sim t^p+ \delta^p$ for all $t\ge 0$
    (cf.~\cite{bdr-7-5}),
    $\int_{\Gamma_h}h_\Gamma\, \textup{d}s \sim |\Omega|$,
    the norm equivalence \eqref{eq:equi2}, and Korn's inequality (Prop.~\ref{korn}), we
    arrive, for every $\bv_h\in V_h^k $, at
    \begin{align*}
        \langle \BA_h\bv_h,\bv_h\rangle_{V_h^k} &\gtrsim \|\Dhke
        \bv_h\|_{p,\Omega}^p+ 
        \|h_{\Gamma}^{\smash{-1/p'}}\jump{\bv_h\otimes
          \bn}\|_{p,\Gamma_h}^p-\delta^p\,\vert \Omega\vert 
      \\
       &\gtrsim \| \bv_h\|_{\nabla ,p,h }^p-\delta^p\,\vert
        \Omega\vert
    \end{align*}
    with constants depending only on $k$, $\omega_0$ and the
    characteristics of $\SSS$, which is the assertion for $p \ge 2$.
  \end{proof}

	\printbibliography

\end{document}